\newtheorem{theorem}{Theorem}
\newtheorem{corollary}{Corollary}[section]
\newtheorem{lemma}[corollary]{Lemma}
\newtheorem{proposition}[corollary]{Proposition}
\newcommand{\Prob} {{\bf P}}
\newcommand{\Z}{{\mathbb Z}}
\newcommand{\E}{{\bf E}}
\newcommand{\Es}{{\rm Es}}
\newcommand{\Cp}{{\rm cap}}
\newcommand{\R}{{\mathbb{R}}}
\newcommand{\dist}{{\rm dist}}
\newcommand{\x}{{\bf x}}
\def \p {\partial}
\def \diam {{\rm diam}}
\def \eset {\emptyset}
\def \saws {{\cal W}}
\def \loop {{\cal L}}
\def \F {{\mathcal F}}
\def \cone {{\mathcal C}}
\newcommand  \Sep  {\textsf{Sep}}
\def \pairs {{\mathcal A}}
\def \barpairs {{\bar \pairs}}
\def \e {{\bf e}}
\def \barsaws {{\bar \saws}}
\def \bgamma {{\bm \gamma}}
\def \bfeta  {{\bm \eta}}
\def  \hm  {{\rm hm}}
\def \seppair {{\mathcal K}}
\newenvironment{definition}[1][Definition]{\begin{trivlist}
\item[\hskip \labelsep {\bfseries #1}]}{\end{trivlist}}
\def \linehere { {\hrule}}
\def \labove { \mtwo \linehere \linehere \linehere \ms   }
\def \lbelow {{\ms \linehere \linehere \linehere \mtwo}}
\def \mtwo {{\medskip \medskip}}
\def \ms {{\medskip}}
\newcommand {{\whoknows}} {{\mathcal C}}
\newenvironment{advanced}
{ \labove \begin{quote} \begin{small}}
{ \end{small}\end{quote} \lbelow }
\def \begad{\begin{advanced}}
\def \endad{\end{advanced}}
\title{The infinite
two-sided loop-erased random walk}
\author{Gregory F. Lawler\\Department of Mathematics\\University of Chicago\thanks{Research supported by NSF grant 
DMS-1513036.}}
\begin{document}
\maketitle
\begin{abstract}
The loop-erased random walk (LERW) in $ \Z^d,
d \geq 2$,
 is obtained
by erasing loops chronologically from simple random walk.  In this
paper we show the existence of the two-sided LERW which can be
considered as the distribution of the  LERW as seen by a point
in the ``middle'' of the path.
\end{abstract}

\section{Introduction}

In this paper we establish the existence
of the infinite two-sided loop-erased random walk (LERW).
We start by explaining what this means.
The (infinite, one-sided)
LERW  is the measure on
non self-intersecting paths obtained by erasing loops
chronologically from a simple random walk.  This sentence
can be taken literally for $d \geq 3$, but for $d =2$
we need a little care.  We will give a 
definition that is valid for $d \geq 2$ that is
easily seen to be equivalent to the usual definition
for $d \geq 3$.
A simple random walk conditioned to never return
to the origin is a simple random walk weighted by
the Green's function (for $d \geq 3$) or the potential
kernel (for $d=2$). 
\begin{itemize}
\item  \textbf{Loop erasure}. Let $S_j$ denote
a simple random walk starting at the origin in
$\Z^d, d\geq 2$, conditioned to never return 
to
the origin.     Let
$\sigma_0 = 0$, and for $n > 0$, let
$\sigma_n = \max\{j: S_j = S_{\sigma_{n-1} +1}\}. $
Then the {\em (infinite, one-sided) loop-erased random walk
(LERW)}  $\hat S_n$ is defined by
\[   \hat S_n = S_{\sigma_n} = S_{\sigma_{n-1} + 1}. \] 
\end{itemize}
For $d \geq 3$, one gets the same measure by taking
a simple random walk without conditioning and defining
$\sigma_0 = \max\{j: S_j = 0\}$.
This is the original definition as in \cite{LLERW}, but it is often
useful to view
this probability measure on infinite self-avoiding paths as a consistent
collection of measures on finite paths.   
We say that $\eta = [\eta_0,\eta_1,\ldots,\eta_n]$ is a
self-avoiding walk (SAW) if it is a nearest neighbor path
with no self-intersections.  We will reserve the term SAW
for finite paths.  The following two facts can be readily
derived from the definition by considering the unique
decomposition of a  simple random walk  $\omega$
path starting at the origin and never returning
to the origin
whose loop-erasure  is $[\eta_0,\eta_1,\ldots]$ as
\begin{equation}  \label{jun24.1}
\omega =  [\eta_0,\eta_1]\oplus
l_1 \oplus [\eta_1,\eta_2] \oplus l_2 \cdots, 
\end{equation}
where $l_j$ is a loop rooted at $\eta_j$ that does
not visit $\{\eta_0,\ldots,\eta_{j-1}\}$, and $\oplus$
denotes concatenation.

\begin{itemize}

\item \textbf{ Laplacian random walk}.  
Suppose
$\eta = [\eta_0,\ldots,\eta_n]$ is a  SAW 
in $\Z^d$ starting at the origin.  Then,

\[  \Prob\left\{\hat S_{n+1} = z \mid [\hat S_0,\ldots,\hat S_n]
= \eta \right\} = \frac{g_\eta(z)}{2d\,\Es_\eta(\eta_n) },  \] where
\[   \Es_\eta(\eta_n) = \Delta g_\eta(z) = \frac{1}{2d}\sum_{|z-\eta_n| = 1} g_\eta(z)\]
and
\begin{itemize}
\item ($d \geq 3$)  $g_\eta(z)$ is the probability that a random walk starting
at $z$ never visits $\eta$, that is, the unique function that
is (discrete) harmonic  
on $\Z^d \setminus \eta$; vanishes on $\eta$; and has boundary value
$1$ at infinity.
\item ($d=2$)   
$g_\eta(z)$ is the unique function that
is (discrete) harmonic  
on $\Z^2 \setminus \eta$; vanishes on $\eta$;  and  satisfies
\[    g_\eta(z) \sim \frac 2{\pi} \log |z|, \;\;\;\; z \rightarrow \infty.\]
\end{itemize}

\item \textbf{Loop measure formulation}.  If $\eta$ is a SAW
starting at the origin, then
\begin{equation}  \label{consistent}
\Prob\{[\hat S_0,\ldots,\hat S_n]
= \eta \} = (2d)^{-n} \, G_0\,F_\eta \, \Es_\eta(\eta_n) , 
\end{equation}
where $G_0 = 1$ if $d = 2$ and $G_0 = G_{\Z^d}(0,0) < \infty$
if $d\geq 3$, and 
\[   F_\eta = \prod_{j=1}^n G_{A_j}(\eta_j,\eta_j).\]
Here $A_j = \Z^d \setminus \{\eta_0,\ldots,\eta_{j-1}\},$
and $G_{A_j}(\cdot,\cdot)$ denotes the simple random walk Green's
function in $A_j$.  That is, $G_{A_j}(x,y)$ is the expected number
of visits to $y$ of a random walk starting at $x$ killed upon
leaving $A_j$.
An alternative expression for $F_\eta $ 
(see Section \ref{loopsec} for definitions) is
\[    F_\eta = F_\eta(\Z^d \setminus \{0\})
=\exp \left\{\sum_{\ell \subset \Z^d \setminus \{0\},
\ell \cap \eta \ne \eset }
m(\ell) \right\}, \]
where $m$ denotes the random walk loop measure. 
If $d \geq 3$, we can write
\[   G_0 \, F_\eta =
F_\eta(\Z^d  )
=\exp \left\{\sum_{\ell \subset \Z^d, \ell \cap \eta \ne \eset }
m(\ell) \right\}, \]
but the right-hand side is infinite for $d=2$.
\end{itemize}

\begad

What will make this paper a little complicated to write is that we will do both the $d=2$ and $d=3$ cases simultaneously.  The basic idea of the coupling argument is the same in both cases but the details about the LERW and the loop measures differ.  In three dimensions the random walk is transient and this will give  us some useful
estimates.  The random walk in two dimensions is recurrent so   these estimates will
not be available.  However, planarity gives us another set of tools.  Random walks can make loops about the origin and disconnect the origin from infinity.  Also, there is the "Beurling estimate" that tells us that if we are close to any continuous path then there is a good chance that a random walk will hit it before going too far.

\endad

The  consistent family
of measures on (finite) SAWs 
given by \eqref{consistent} 
represents the probability that the LERW starts with $\eta$.  This  can be
called the {\em one-sided} measure because the LERW continues
on only one   side of $\eta$.  In this paper we will consider
the two-sided measure which can be viewed as the 
distribution of the ``middle'' of
a LERW.  There are two versions of
our result. One is in terms of LERW excursions
as discussed in \cite{LL}.   Suppose $A$ is 
a simply connected subset of $\Z^d$ containing
the origin and $x,y$ are distinct points in $\p A$. 
Consider simple random walks starting at $x$ conditioned
to enter $A$ and then leave $A$ for the first time at
$y$ at which time they are stopped.  Erase loops, restrict to the event
that the loop-erasure goes through the origin,
and then normalize to make this a probability measure
which we denote by 
$\lambda_{A,x,y}^\#$.  It is supported  on $\pairs_n(A;x,y)$,
the set of SAWs from $x$ to $y$ in $A$ going through
the origin. 
If $\eta = [z_0,z_1,\ldots,z_m], \gamma =[\gamma_0,
\gamma_1,\ldots,\gamma_k]$,   are SAWs 
we write $\eta \prec \gamma$ if $\gamma$ includes $\eta$ 
in the sense that  there exists $j$ such that $z_{j+i}
= \gamma_i, i=0,1,\ldots,m $.  We write $\pairs_{A,x,y}( \eta) $
for the corresponding sets of SAWs that include $\eta$.
While our main theorem discusses this measure, we will first study a slightly different measure.

Let
\[    C_n = \{x \in \Z^d: |x| < e^n\} , \]
with boundary $\p C_n = \{x \in \Z^d: \dist(x,C_n) = 1\}$.
  Let  $\pairs_n$  be the union of $\pairs(C_n;x,y)$
over all $x,y \in \p C_n$.   We write
$\pairs _n (\eta)$
for the corresponding set of SAWs that include $\eta$.  Note
that $\pairs_n$ is $\pairs_n(\eta)$ for the trivial SAW
$\eta = [0].$

There is another way to describe the set $\pairs_n$.
Let  $\saws_n$ denote the set of SAWs starting at the origin, ending
at $\p C_n$, and otherwise staying in $C_n$.  
Then we can
also define  $\pairs_n$ to be  the set of ordered pairs
$\bfeta = (\eta^1,\eta^2) \in \saws_n^2 = \saws_n
\times \saws_n$ with  
$\eta^1 \cap \eta^2 = \{0\}$.  This is essentially the 
same definition of $\pairs_n$ as above if we use the natural bijection  given
by $\eta \leftrightarrow (\eta^1)^R \oplus \eta^2$, where
$R$ denotes path reversal.
We define $\mu_n$ to be the probability
measure on $\saws_n$ induced by
the infinite LERW by stopping the path at the first visit to
$\p C_n$; we will also write $\mu_n$ for $\mu_n \times
\mu_n$,  the product measure
on $\saws_n^2$.  We define a measure $\lambda_n$
on $\pairs_n$ by stating that its Radon-Nikodym derivative
with respect to $\mu_n$ is
\[    1\{\bfeta \in \pairs_n\} \, \exp\{-L_n(\bfeta)\}, \]
where $L_n(\bfeta)$ denotes the loop measure
(see Section \ref{loopsec})  of loops
in $C_n \setminus \{0\}$ that intersect both $\eta^1$
and $\eta^2$.   For $d=2$, we will restrict
to loops that do not disconnect the origin from infinity.

If $k < n$  and $\bfeta \in \pairs_k$, we let
$\pairs_n(\bfeta)$ be the set of $\bgamma \in \pairs_n$
that are extensions of $\bfeta$  (we write $\bfeta \prec
\bgamma$).   Similarly, if $A \supset C_{n+1}$,
we let $\pairs_{A,x,y}[\bfeta]$ be the set of $\bgamma
\in \pairs_n(A,x,y)$ that are extensions of
$\bfeta$.  We can state our main theorem.

\begin{theorem} \label{introtheorem}
There exists $\alpha > 0$ such that 
the following holds.  For every positive integer
$k$ and every   $\bfeta \in \pairs_k$,
the limit
\[  p(\bfeta) = \lim_{n \rightarrow \infty}
\frac{  \lambda_n[\pairs _n( \bfeta)]}{ \lambda_n[\pairs _n] }
\]
exists.  In fact, 
\begin{equation}  \label{jun12.3}        \lambda_n[\pairs _n( \bfeta)]
= p(\bfeta) \, \lambda _n[\pairs _n] \, \left[1 + O(e^{\alpha(k-n)})\right].
\end{equation}
Moreover, if $A$ is a simply connected set
containing  $C_{n+1}$ and $x,y \in \p A$
with $\pairs(A;x,y)$ nonempty,
then
\begin{equation}  \label{jun12.3.alt} 
\lambda_{A,x,y}^\#[\pairs_{A,x,y}(\bfeta)] = 
   p(\bfeta) \, \left[1 + O(e^{\alpha(k-n)})\right].
   \end{equation}
\end{theorem}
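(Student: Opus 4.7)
The plan is to show that the probability measures on $\pairs_k$ obtained by pushing $\lambda_n/\lambda_n[\pairs_n]$ forward under the truncation map $\bgamma \mapsto \bgamma|_k$ (the pair stopped at first exit from $C_k$) converge as $n \to \infty$ with error $O(e^{\alpha(k-n)})$, and to define $p(\bfeta)$ as the mass of $\bfeta$ under the limit. By a telescoping argument it suffices to prove that for every $\bfeta, \bfeta' \in \pairs_k$ and every $n \geq k$,
\[ \frac{\lambda_{n+1}[\pairs_{n+1}(\bfeta)]/\lambda_{n+1}[\pairs_{n+1}(\bfeta')]}{\lambda_{n}[\pairs_{n}(\bfeta)]/\lambda_{n}[\pairs_{n}(\bfeta')]} = 1 + O(e^{\alpha(k-n)}), \]
uniformly in $\bfeta, \bfeta'$; summing the resulting geometric series and normalizing by $\sum_{\bfeta' \in \pairs_k}\lambda_n[\pairs_n(\bfeta')]$ yields \eqref{jun12.3}.

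The estimate of this scale-to-scale ratio will use the loop-measure representation. Combining the two LERW weights $\mu_n(\gamma^1)\mu_n(\gamma^2) = (2d)^{-|\gamma^1|-|\gamma^2|} F_{\gamma^1}(C_n)F_{\gamma^2}(C_n)$ with the $e^{-L_n(\bgamma)}$ factor gives, via $m(A)+m(B)-m(A\cap B)=m(A\cup B)$, a single exponential of the loop measure of loops in $C_n$ that meet $\gamma^1 \cup \gamma^2$ (with the $d=2$ modification removing loops through the origin and those that disconnect it). This is a Gibbs-type weight on pairs, and the analytic heart of the argument is a decay estimate on its cross-scale part: the mass of loops that simultaneously meet $\bgamma|_k$ and a piece of the continuation lying outside $C_m$ should be exponentially small in $m-k$. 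In $d=3$ this is a direct consequence of transience of simple random walk. In $d=2$ — the main obstacle — the loop measure on loops surrounding the origin is heavy, but the non-disconnection restriction together with the Beurling estimate (any walk that crosses scale $m$ and comes close to a path near the origin has good probability of hitting it again) provides the analogous decay. Two auxiliary results will drive the argument: a \emph{separation lemma}, showing that under $\lambda_n / \lambda_n[\pairs_n]$ the two endpoints of $\bgamma|_m$ on $\p C_m$ are separated by at least $c\, e^m$ with probability bounded below uniformly in $k \leq m \leq n$; and a \emph{coupling lemma}, showing that conditional on well-separated boundary configurations at scale $m$, the extensions to $\p C_n$ can be coupled so that their $\lambda_n$-weights agree up to a factor $1 + O(e^{\alpha(k-m)})$.

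For \eqref{jun12.3.alt} a parallel argument applies. The excursion measure $\lambda_{A,x,y}^\#$, restricted to $\pairs_k$ via the same truncation, is a loop-measure weighted pushforward of the same form as $\lambda_n/\lambda_n[\pairs_n]$, with the domain $C_n$ replaced by $A$ and the boundary pair $x,y$ fixed. For $A \supset C_{n+1}$ the Radon-Nikodym derivative between the two restrictions to $\pairs_k$ is controlled by the loop measure of loops touching both $\bgamma|_k$ and $\p A$ (or the marked points $x, y$); the same cross-scale estimate used for \eqref{jun12.3} forces these loops to contribute $1 + O(e^{\alpha(k-n)})$, simultaneously identifying the limit appearing in \eqref{jun12.3.alt} with $p(\bfeta)$ and yielding the claimed rate.
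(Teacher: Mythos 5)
Your plan is in the same family as the paper's argument: reduce to a scale-to-scale ratio estimate (your two-point version of the paper's \eqref{jun12.4} is algebraically equivalent), exploit the loop-measure representation, and drive the comparison with a separation lemma plus a coupling of the Markov chains $\bgamma_n \in \pairs_n$. The pieces you list — transience in $d\ge 3$, Beurling and the non-disconnection restriction in $d=2$, cross-scale loop decay — are exactly the auxiliary estimates the paper deploys. So the overall route matches.

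Two places, though, where what you wrote would not quite go through as stated. First, the formula $\mu_n(\gamma^1)\mu_n(\gamma^2) = (2d)^{-|\gamma^1|-|\gamma^2|}F_{\gamma^1}(C_n)F_{\gamma^2}(C_n)$ is not the correct expression for $\mu_n$: in the paper $\mu_n(\eta) = (2d)^{-|\eta|}F_\eta\,G_0\,\Es_\eta(z)$, with $F_\eta$ taken over all of $\hat\Z^d$ rather than $C_n$, and with the escape factor $\Es_\eta(z)$ present. Because of that escape factor, $\mu_n\times\mu_n\times e^{-L_n}$ does \emph{not} collapse into a single $\exp\{m(\text{loops meeting }\gamma^1\cup\gamma^2)\}$ by inclusion-exclusion. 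This is why the paper treats the $\mu$-measure and the tilt $Q$ separately (Proposition \ref{indprop}, the pair-decomposition \eqref{compmeasure}, and \eqref{qsub}), rather than building from a ``Gibbs'' weight on the union. Related to this, the coupling is not a secondary tool riding on the loop-decay estimate: the loop decay controls Radon--Nikodym ratios between nearby configurations, but the exponential rate $e^{\alpha(k-n)}$ comes from the renewal/geometric structure of the coupling itself (Proposition \ref{coupling} and the $J_n$/$\sigma_l$ bookkeeping), with the separation lemma supplying the uniform ``reset'' probability.

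Second, and more substantively, your argument for \eqref{jun12.3.alt} has a gap. You assert that the Radon--Nikodym derivative of $\lambda_{A,x,y}^\#$ against $\lambda_n^\#$ (on $\pairs_n$, truncated to $\pairs_k$) is ``controlled by the loop measure of loops touching both $\bgamma|_k$ and $\p A$'' and is therefore $1+O(e^{\alpha(k-n)})$. But this Radon--Nikodym derivative, which the paper calls $Z_A(\bgamma)$, is \emph{not} pointwise close to $1$: as remarked after Proposition \ref{sep9.prop1}, $Z_A(\bgamma)\asymp 1$ only when $\bgamma\in\Sep_n$, and it can be small when the tips of $\gamma^1,\gamma^2$ are close. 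What the paper actually proves (Proposition \ref{sep9.prop1}) is a uniform upper bound $Z_A(\bgamma)\le c_2$ together with a Lipschitz-type estimate $|Z_A(\bgamma)-Z_A(\tilde\bgamma)|\le c_2 e^{-j/4}$ when $\bgamma=_j\tilde\bgamma$; these are then combined with the coupling to transfer the estimate. You would need that additional regularity-of-$Z_A$ step and the coupling to close \eqref{jun12.3.alt}; a direct cross-scale loop bound on the Radon--Nikodym derivative does not work.
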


The statement \eqref{jun12.3} uses a convention that we will
use throughout this paper. 
Any implicit constants arising from
$O(\cdot)$ or $\asymp$ notations can depend on $d$ but
otherwise are assumed to be uniformly bounded
over all the parameters including vertices in $\Z^d$, $k,n$, and  
$\bfeta \in \pairs_k$.   In other words, we can say that there exists $c,\alpha$ such
that for all $k \leq n-1$ and all $\bfeta \in \pairs_k$, 
\[         \left|\log\left(\frac{ \lambda_n [\pairs _n( \bfeta)]}
{p(\eta) \, \lambda_n[\pairs _n]} \right)\right| \leq c \, e^{\alpha(k-n)}.\]

We will only do the details of the  proof for $d=2$ and $d=3$ for which 
the result is new. For $d=4$, it can be derived
from the construction of the two-sided walk in \cite{LSunW}, and for $d\geq 5$, it is
even easier.   The proof we give for  $d =3$ can be adapted easily
to $d > 3$; it uses   transience of the random walk.  
The $d=2$
case is similar but there are difficulties arising
from recurrence of the random walk.     These can be overcome by making use
of planarity, and, in particular, the Beurling estimate and the
disconnection exponent for two-dimensional random walks.

If we let $p_k$ be $p$ restricted to $\pairs_k$, then $\{p_k\}$
is a consistent family of probability measures and induces a probability
measure on pairs  of
infinite self-avoiding paths starting
at the origin that do not intersect (other than the initial point).  We
call this process the {\em two-sided infinite loop-erased random walk}. 
The result in this paper is different (and, frankly, easier) than questions about the scaling limit  of  loop-erased walk.   For the one-sided case in  $d=2$,
the scaling limit is now  well understood as the Schramm-Loewner evolution
with parameter $\kappa = 2$; see \cite{BLV,Kenyon,LSW,LV} for some of the main
papers here. The existence of a scaling limit in $d \geq 3$
was established in some sense by Kozma \cite{Kozma} but there is still much work to do in understanding the limit and the rate of convergence to the limit; see \cite{Shir} for some work in this
direction.  Our methods are similar to those
in \cite{Nonintersect,Vermesi} where convergence to the measure on mutually non-intersecting Brownian motions is studied and in \cite{Masson} where two-dimensional loop-erased walk is studied.

We now outline the paper.  Most of the work is
focused on \eqref{jun12.3}; the final section will show how to derive \eqref{jun12.3.alt} from this.
We start by giving the notations that we will  use and state the main theorem we will prove. 
  There are two different ways to define ``loop-erased walk stopped when it leaves the ball of radius $r$'': one can either take a simple random walk and stop it when it leaves the ball and then erase loops
  or    erase loops from the infinite random walk and then stop the loop-erased walk when it leaves the disk.  It turns out easier to analyze the latter case first and this is where we focus our effort.  We use the ``loop measure'' description of the LERW and we review relevant facts about the loop measure in Section  \ref{loopsec}.  Here there is a difference
between two and higher dimensions.  In two dimensions, the measure of loops intersecting a finite set is infinite.  However, this can be handled by splitting the set of loops into those that surround the origin and those that do not.  
Roughly speaking, the loops that surround the origin give the divergence in the loop measure; however, all such loops intersect all infinite self-avoiding paths
starting at the origin, so this term cancels out. 

To use the approach in \cite{Nonintersect,Vermesi}, one needs two ``obvious'' lemmas about random walks.  Although proofs of these results appear several other places, we choose to give sketches here as well.
The first is in Section \ref{rwlemmasec} where it is shown that a random walk starting on the sphere of radius $R$ about the origin, stopped when it reaches distance $r$ from its starting point, conditioned to avoid some set contained in the ball of radius $R$, has a reasonable chance of ending up at
a point distance $R + (r/2)$ away from the origin.  If there is no
conditioning, this follows from the central limit theorem, and the conditioning
should just increase the probability (this is why it is ``obvious'').   It is very useful to know that one can find uniform bounds, uniform over $R,r$, the starting point of the walk, and the avoidance set.  The second ``obvious'' fact is called a {\em separation lemma} which roughly states that 
loop-erased walks conditioned to avoid each other tend to stay far
apart.  Again, the key is to find a version of this that is uniform.  
The proof uses the same basic idea as the original proof in
\cite{cutpoint} as adapted for loop-erased walk \cite{Masson,LV,Shir}.
Section \ref{lerwsec} goes over facts about LERW including an important
lemma that the LERW stopped about reach radius $R$ is ``independent
up to constants'' with the walk starting at the last visit to the disk
of radius $2R$.  The next subsection considers pairs of walks and sets up for Section \ref{couplesec} where the coupling is done.  As has been done in several of the papers before, one chooses a large integer $N$ and
considers the probability measure on pairs of SAWs given by the
LERWs weighted by a loop measure term.  We then view this measure
as giving transition probabilities for LERW stopped when it reaches
a smaller radius $n$ and this is the process that we couple.

\section{The main theorem}

\subsection{Notation and main result}

We list the notation that we will use.

\begin{itemize}

\item  If $A \subset \Z^d$, we write
$\hat A = A \setminus \{0\}$.  In particular,
$\hat \Z^d = \hat \Z^d \setminus \{0\}.$  We write
\[  \p A = \{z \in \Z^2: \dist(z,A) = 1\}, \;\;\;\;
 \p_i A = \p(\Z^2 \setminus A)
 = \{z \in A: \dist(z,\Z^2 \setminus A) = 1\}. \]

\item  If $z \in \partial A$, we
 let $H_A(x,z)$ denote the Poisson kernel, that is,
 the probability that a simple random walk starting at $x$
 first visits $\Z^2 \setminus A$ at $z$.  If $x \in \p
 A$, then $H_A(x,z) = \delta(x-z)$.  If $w,z$ are 
 distinct points in $\p A$, we let $H_{\p A}
  (w,z)$ denote the boundary Poisson kernel
 defined by
 \[    H_{\p A}(w,z) = \frac 1{2d}
 \sum_{x \in A, |w-x| = 1}
    H_A(x,z).\]
    A last-exit decomposition  shows that if $x \in A$, then
    \[  H_A(x,z) = G_A(x,x) \, H_{\p(A \setminus \{x\})}
       (x,z).\]

\item If $n \geq 0$ (not necessarily an integer), let
\[   C_n = \{z \in \Z^d: |z| < e^n \} , \]
be the discrete ball of radius $e^n$.
Note that $C_0 = \{0\}$ and $\hat C_n
= C_n\setminus C_0$.    

\item Let $\e_n = (e^n,0,\ldots,0)$
be the element of $\R^d$ with first component $e^n$ and all other
components equal to zero. 

\item  If $S$ is a simple
random walk,  then we write $\hat S$
for its (chronological) loop-erasure.   

\item If $\eta = [\eta_0,\ldots,\eta_j]$ is a SAW, we write $|\eta| = j$
for the number of steps in $\eta$. We call $\eta_0$ and
$\eta_j$ the {\em initial} and {\em terminal} points or
vertices of $\eta$,
respectively.

\item  $\saws_n$ is the set of SAWs 
starting at the origin whose terminal point is in $\p C_n$ and
all other vertices are in $C_n$.

\item  $\barsaws_n$ is the set of infinite self-avoiding paths
whose initial point is in $C_n$ and all other vertices are in
$\Z^d \setminus C_n$.  In particular, $\barsaws_0$ is the
set of infinite self-avoiding paths starting at the origin.

\item  If $n  \leq  m$, then $\barsaws_{n,m}$ is the set of
SAWs satisfying: the initial point is in  $C_n$,  
the terminal point is in $\p C_m$, and all other vertices
are in $C_m \setminus C_n$.

\item We write $\eta \prec \tilde \eta $ if $\eta$
is contained in $\tilde \eta$.  If $\eta, \tilde \eta$
both start at $0$, then this means that  $\eta$ is an initial
segment of $\tilde \eta $.

\end{itemize}

If $\eta \in \barsaws_0$ and $n > 0$, there is a
unique decomposition
\begin{equation}
\label{decomp}  \eta = \eta_n \oplus \eta^* \oplus \bar \eta_{n+1} , 
\end{equation}
where $\eta_n \in \saws_n, \bar \eta_{n+1} \in \barsaws_{n+1}$.
Similarly, 
if $n \leq m-1$ and $\eta \in \saws_m$,  there
is a unique decomposition
\begin{equation}
\label{decomp2}  \eta = \eta_n \oplus \eta^* \oplus \eta_{n+1,m} , 
\end{equation}
where $\eta_n \in \saws_n, \eta_{n+1,m} \in \barsaws_{n+1,m}$.
In these
decompositions   $\eta^*$
is a SAW starting at $\p C_n$ with  
terminal vertex  in $\p_i C_{n+1}$.  In \eqref{decomp2}
we also need $\eta^* \subset C_m$.

We will also be considering pairs of paths.  We will use
bold-face notation for pairs of SAWs.

\begin{itemize}
\item  $\pairs_n$ is the set of  ordered
pairs $\bfeta = (\eta^1,\eta^2) \in \saws_n^2:= \saws_n
\times \saws_n$ such that
\[  \eta^1 \cap \eta^2 = \{0\}.\]

\end{itemize}

\begad
The notation gets a little cumbersome, but it useful to remember that bold-face
$\bgamma,\bfeta$ will always refer to ordered pairs of SAWs.
\endad

Recall in the introduction that
we  wrote $\pairs_n$ for the set
of SAWs $\eta$ whose initial and terminal vertices
are in $\p C_n$; all other vertices are in $C_n$;
and that include the origin as a vertex. 
Indeed, there is a simple bijection to show
that these are essentially the same set: 
\[   (\eta^1,\eta^2) \longleftrightarrow \eta =
(\eta^1)^R \oplus \eta^2 , \]
where $R$ denotes the reversal of the walk.

\begin{itemize}

\item  If $1 \leq n \leq m-1$,
$\barpairs_{n,m}$ is the set of  ordered
pairs $\bfeta = (\eta^1,\eta^2) \in \barsaws_{n,m}
\times \barsaws_{n,m}$
with $\eta^1 \cap \eta^2 = \eset$.

\item  We write $(\eta^1,\eta^2) \prec
(\tilde \eta^1,\tilde \eta^2)$ if
$\eta^1 \prec \tilde \eta^1$ and
$\eta^2 \prec \tilde \eta^2$.

\end{itemize}

\begin{itemize}

\item  Let
$\mu_m$ denote the probability measure on 
$\saws_m$ obtained
by taking an  infinite loop-erased  random walk  and truncating
the path at the first visit to $\p C_m$.  As a slight abuse
of notation, we also write 
$\mu_m$ for the product measure $\mu_m \times \mu_m$
on  $ \saws_m^2$.

\item  If $\bgamma  =(\gamma^1,\gamma^2)
\in\saws_m \times \saws_m$, we define
\[    Q_m(\bgamma) = \exp \left\{-L_m (\gamma^1,\gamma^2)
\right\} = 
1\{\bgamma \in \pairs_m \} \,  \exp \left\{-L_m (\gamma^1,\gamma^2)
\right\},
\]
where:
\begin{itemize}
\item $ L_m (\gamma^1,\gamma^2)  = \infty$
if $\bgamma \not \in \pairs_m$,
\end{itemize}
and if $\bgamma \in \pairs_m$,
\begin{itemize}
\item $(d \geq 3$)   $L_m (\gamma^1,\gamma^2)$
denotes the loop measure (see Section \ref{loopsec})
of the set  of loops
in $\hat C_m$ that intersect both $\gamma^1$ and $\gamma^2$. 
\item $(d = 2)$
$L_m (\gamma^1,\gamma^2)$
denotes the loop measure 
of the set  of loops
in $\hat C_m$ that intersect both $\gamma^1$ and $\gamma^2$   and do not disconnect $0$
from $\p C_m$.
\end{itemize}
\end{itemize}

\begad
For $d=2$, we will be ignoring loops in $\hat C_n$ that disconnect $0$
from $\p C_n$.  The reason is that all such loops intersect  all 
$\gamma \in \saws_n$ and hence these loops have no effect on the probability
distribution obtained by tilting by $e^{-(\rm loop \;term)}$.  Restricting to
loops in two dimensions that do not intersect will give us estimates analogous
to estimates in three dimensions obtained from transience of
the random walk.

\endad

\begin{itemize}

\item  Let
\[ \lambda_m   = \E_{\mu_m} \left[Q_m(\bgamma)\right]=
\sum _{\bgamma \in \saws_m^2} \mu_m(\bgamma)
\, Q_m(\bgamma), \]
and if $n \leq m-1$ and $\bfeta  \in \saws_{n}^2$, we let
\[    \lambda_m(\bfeta) = \sum _{\bgamma \in \saws_m^2,\;
\bfeta \prec \bgamma}
   \mu_m(\bgamma)
\, Q_m(\bgamma)  .\]
Note that   
$\lambda_m(\bfeta)$ is nonzero only if $\bfeta \in \pairs_n$ and
that 
\[   \lambda_m = \sum_{\bfeta \in \pairs_n} \lambda_m(\bfeta).\]
\end{itemize}

To prove  \eqref{jun12.3}, it suffices to prove
that for all $\bfeta \in \pairs_n$ and $m \geq n+1$,
\begin{equation}  \label{jun12.4}
\frac{\lambda _{m+1}(\bfeta)}
   {\lambda_{m}(\bfeta) } = \frac{\lambda_{m+1}}
   { \lambda_{m}} \, \left[1 +O(e^{\alpha(n-m)})\right], 
   \end{equation}
since this implies that
\[    \frac{\lambda _{m+1}(\bfeta)}{\lambda_{m+1}}
   = \frac {\lambda_{m}(\bfeta) }
   { \lambda_{m}} \, \left[1 +O(e^{\alpha(n-m)})\right]. \]
We concentrate on \eqref{jun12.4} and use a coupling argument
to establish this.  Let $ \lambda_m^\#,\lambda_m^\#(\bfeta)$ denote
the probability measures on $\pairs_m$ whose Radon-Nikodym
derivative with respect to $\mu_n$ are 
\[    \frac{Q_m(\bfeta')}{\lambda_m}, \;\;\;\;
\frac{Q_m(\bfeta') \, 1\{\bfeta \prec \bfeta'\}}
  {\lambda_m(\bfeta)}, \]  respectively.  We show
that we can couple $\pairs_m$-valued
random variables with
distributions
$ \lambda_m^\#$ and $\lambda_m^\#(\bfeta)$ on the same probability
space so that, except for an event of small probability, the
paths agree except for an initial part of the path. (It would
be impossible to couple them so that the total paths agree since 
$\lambda_m^\#(\bfeta)$ is supported on pairs of walks that start
with $\bfeta$.)

\subsection{Some results about two-dimensional walks}

We will assume that the reader is acquainted with basic facts about simple random walk; we will use  \cite{LL} as a reference.
For $d \geq 3$, we will be using transience of the
random walk; in particular, we will use the estimate that
if $z \in \Z^d$, then the probability that a   random walk
starting at $z$
gets within distance $r$ of the origin is bounded above by
$c (r/|z|)^{d-2}$.  For $d=2$, some of the important results are perhaps less known, so we will review them here.  This subsection can be skipped at first reading
and referred to as necessary. 

In this subsection we let
$S_j$ denote a simple random walk,
and
\[  \rho_n = \min\{j: S_j \not \in C_n\}.\]

We let $a(x)$ be the potential kernel in $\Z^2$; it can be described
as the unique function that is harmonic on $\Z^2 \setminus \{0\}$; vanishes
at the origin; and is asymptotic to $(2/\pi) \log |x|$.  It is known 
\cite[Theorem 4.4]{LL} that
$a(x) = 1$ for $|x| = 1$ and 
\begin{equation}  \label{potential}
 a(x) = \frac 2{\pi} \, \log |x| + k_0 + O(|x|^{-2}), \;\;\;\;
|x| \rightarrow \infty ,
\end{equation}
for a known  constant $k_0$ (whose value is not important to us).
In particular, 
\[     a(x)   = \frac{2n}{\pi} + k_0 + O(e^{-n}),\;
   \;\;\; x \in \p C_n.\]
More generally, if $\eta$ is a SAW (or any finite set), we define
$a_\eta(x)$ to be unique function that is harmonic on 
on $\Z^2 \setminus \eta$; vanishes on $\eta$;
and is asymptotic to $(2/\pi) \log |x|$ as $|x| \rightarrow \infty$.   It is related to escape
probabilities by
\[            a_\eta(x) = \lim_{n \rightarrow \infty}
 \frac{2 n}{\pi} \,  \Prob^x\{S[0,\rho_n] \cap \eta
 =  \eset\}  ,\]
 see \cite[Proposition 6.4.7]{LL}.
 If $\eta$ contains $0$, we can write
 \begin{equation}  \label{sep13.4}
 a_\eta(x) = a(x) - \E^x[a(S_\tau)], 
\end{equation}
 where
$  \tau = \tau_\eta = \min\{j \geq 0: S_j \in \eta\}.$
If $x \in \eta$, we write
\[  \Es_\eta(x) =  \lim_{n \rightarrow \infty}
 \frac{2 n}{\pi} \,  \Prob^x\{S[1,\rho_n] \cap \eta
 =  \eset\} = \Delta a_\eta(x), \]
 where $\Delta$ denotes the discrete Laplacian.
 The capacity of $\eta$, $\Cp(\eta)$, is
 defined by
 \[  a_\eta(x) = \frac 2 \pi \, \log |x| -
  \Cp(\eta) +o(1), \;\;\;\; |x| \rightarrow \infty.\]

Random walk in $\Z^2$ conditioned to avoid $\eta$ is the $h$-process
obtained from the function $a_\eta(x)$.  In other words, if $x$
is in the unbounded component of $\Z^2 \setminus \eta$, then
the transition probabilities are given by
\[         p(x,y) =   \frac{a_\eta(y)}{4 \, a_\eta(x)}.\]
This process can also be started on the boundary of the
unbounded component (that is, on points of $\eta$ that are
connected to infinity in $\Z^2 \setminus \eta$), by
the same formula, replacing $a_\eta(x)$ with $\Es_\eta(x)$.
It is immediate that this is a transient process that never returns
to $\eta$ after time $0$.    The case $\eta = [0]$, $a_\eta = a$
corresponds to random walk conditioned to never return to
the origin.

\begin{lemma}  There exists $c > 0$ such that the following holds.
Let $ 0 < k < n$ and let $\eta$ be a SAW intersecting both $C_k$
and $ \p C_n$.  Let $S$ be a simple random walk starting at $x \in C_k$.
\begin{itemize}
\item  (Beurling)  
\[   \Prob^x\{S[0,\rho_n] \cap \eta = \eset\} \leq c\, e^{(k-n)/2} . \]
\item (Disconnection probability)
\begin{equation}  \label{disconnect}
\Prob^x\{0 \mbox{ is connected to } \p C_n \mbox{ in }
\Z^2 \setminus S[0,\rho_n] \} \leq ce^{(k-n)/4}.
\end{equation}
\end{itemize}
\end{lemma}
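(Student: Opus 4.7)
My plan for the Beurling estimate is to reduce to the discrete Beurling projection theorem. The hypothesis that $\eta$ meets both $C_k$ and $\p C_n$ guarantees (as $\eta$ is connected, being a SAW) that $\eta$ contains a lattice path joining $\p C_k$ to $\p C_n$ inside the closed annulus $\overline{C_n}\setminus C_k$. The discrete Beurling projection lemma (see \cite[Ch.~6]{LL}) then says that radially projecting this sub-path onto a lattice approximation of the segment $[e^k,e^n]$ on the positive real axis can only decrease the survival probability of $S[0,\rho_n]$. For a straight radial slit, I would compute the survival probability directly: using the explicit form of the harmonic measure on a disk minus a radial slit (or equivalently a conformal-map computation combined with the invariance principle and a KMT-type coupling), one sees that the harmonic measure of the tip side decays like $e^{-(n-k)/2}$, which is the exact exponent. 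A multiplicative sub-Markov argument scale by scale along dyadic shells $C_{k+i}\setminus C_{k+i-1}$ then yields the bound with exponent $1/2$.

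For the disconnection bound I would exploit planarity in essentially the same shell-by-shell style. Divide $C_n \setminus C_k$ into overlapping annuli of bounded aspect ratio. In each annulus, starting from any entry point, simple random walk has a uniformly positive probability (by the invariance principle applied to Brownian motion in an annulus, say) of tracing a closed loop around the origin before exiting the outer boundary. Such a loop topologically separates $0$ from $\p C_n$ in $\Z^2 \setminus S[0,\rho_n]$, so on the complementary event the walk must fail in every shell. The strong Markov property gives some geometric decay $c\theta^{n-k}$, but to get the specific exponent $1/4$ I would invoke the known two-dimensional disconnection exponent for simple random walk (Lawler--Puckette--Werner; see also Kesten's and Lawler's work on cut points): the probability that the trace of the walk does not disconnect $0$ from $\p C_n$ decays like $(e^k/e^n)^{1/4}$, with the exponent matching the Brownian disconnection exponent.

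The main obstacle is precisely the sharpness of the two exponents. A crude iteration over annuli gives some positive exponential rate in both estimates rather trivially, but pinning the rates at exactly $1/2$ and $1/4$ relies on nontrivial external input: Beurling projection plus the harmonic-measure asymptotics at the tip of a slit in the first case, and the Brownian intersection/disconnection-exponent theory in the second. In a self-contained treatment I would simply cite these results and only sketch the reduction of the random-walk problem to the continuous statements via coupling, since this is exactly what the parenthetical comment in the excerpt suggests the author will do.
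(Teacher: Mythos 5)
Your proposal matches the paper's approach: the author's proof is simply a citation to Kesten (and Lawler--Limic, Theorem~6.8.1) for the Beurling estimate, to Lawler--Puckette for the sharp power-law form of the disconnection probability, and to Lawler--Schramm--Werner for the exact value of the exponent. Your attribution of the disconnection exponent to a single ``Lawler--Puckette--Werner'' paper conflates the last two references, but otherwise the substance of your sketch (Beurling projection onto a radial slit for the first bullet; a shell-by-shell argument giving some exponential rate, with the sharp $1/4$ coming from the Brownian disconnection exponent theory, for the second) is consistent with the standard route to these results and with the paper's decision to cite rather than reprove them.
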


We have stated these estimates for random walk starting in $C_k$ ending
at $\p C_n$.  By reversing paths we get analogous statements
for random walks starting in $\p C_n$ stopped upon reaching
$C_k$.

\begin{proof}
The discrete Beurling estimate was first proved by Kesten in \cite{Kesten};
see also, \cite[Theorem 6.8.1]{LL}.   The fact that the disconnection
probability satisfies a power law   up to constants (with no logarithmic correction)
with the Brownian disconnection exponent 
was proved in \cite{LPuckette}.  The value of the exponent
was determined rigorously in \cite{LSWexp}.
\end{proof}

\begad
 The exponents $1/2$ and $1/4$ are known but they
take some effort to prove, especially the latter one.  For our main theorem,  
it would suffice that there is some exponent that satisfies these
conditions and proving that is significantly easier; however,
in order to avoid having extra arbitrary exponents,
we will
use the actual values.
\endad

The following is an easy corollary
of \eqref{potential},
\eqref{sep13.4}, and  the Beurling estimate.

\begin{lemma}  \label{lemma.sep8.0}
There exists $0 < c< \infty$ such that the following
is true.  Suppose $\eta \in \saws_n$.
\begin{itemize}
\item  For $ |z| > e^n$,
\[       a_\eta(z)\geq   \frac{2}{\pi} \, [\log |z| - n ] + O(e^{-n}).\]
\item  For all $z \in C_n$,
\[        a_\eta(z) \leq  c\, e^{-n/2} \,  \left[\dist(z,\eta)\right]^{1/2} .\]
In particular, if $z \in C_k$ with $k < n$,
\[      a_\eta(z) \leq c\, e^{(k-n)/2}.\]
\end{itemize}
\end{lemma}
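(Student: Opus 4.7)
The first item is immediate from \eqref{sep13.4}: since every vertex of $\eta \in \saws_n$ lies in the closed ball of radius $e^n + O(1)$, the hitting point $S_\tau \in \eta$ satisfies $a(S_\tau) \leq \frac{2n}{\pi} + k_0 + O(e^{-n})$ by \eqref{potential}. Plugging this into $a_\eta(z) = a(z) - \E^z[a(S_\tau)]$ and using $a(z) = \frac{2}{\pi}\log|z| + k_0 + O(|z|^{-2})$ with $|z| > e^n$ (so $|z|^{-2} = O(e^{-2n})$) gives the stated lower bound.

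For the upper bound in the second item I would combine a translated Beurling estimate with optional stopping. Set $r = \dist(z,\eta)$ and let $v_0 \in \eta$ realize this distance. Since $\eta$ is a connected path from $0$ to $\p C_n$, it extends at least distance $e^n/2$ from $v_0$, so the Beurling estimate centered at $v_0$ yields
\begin{equation*}
\Prob^z\bigl[S[0,T]\cap \eta = \eset\bigr] \leq c\, r^{1/2} e^{-n/2},
\end{equation*}
where $T$ is the first exit from $B(v_0, e^n/2)$. Applying optional stopping to the bounded martingale $\bigl(a_\eta(S_{j \wedge \tau \wedge T})\bigr)_j$ and using $a_\eta \equiv 0$ on $\eta$ gives
\begin{equation*}
a_\eta(z) = \E^z\bigl[a_\eta(S_T);\, T < \tau\bigr] \leq \Bigl(\sup_{|y - v_0| = e^n/2} a_\eta(y)\Bigr) \cdot c\, r^{1/2} e^{-n/2}.
\end{equation*}

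What remains is the uniform bound $a_\eta(y) = O(1)$ on that sphere. I would derive it from two ingredients: first, the capacity asymptotic $a_\eta(y) = \frac{2}{\pi}\log|y| - \Cp(\eta) + o(1)$ as $|y| \to \infty$, combined with the standard logarithmic-capacity lower bound $\Cp(\eta) \geq \frac{2n}{\pi} - O(1)$ valid for any connected set of diameter $\geq e^n$ (by comparison with a line segment); this yields $a_\eta(y) = O(1)$ for $|y| \leq 2e^n$. Second, the maximum principle applied to $a_\eta$ on the annular region $C_{n+\log 2} \setminus \eta$ — with boundary values $0$ on $\eta$ and $O(1)$ on $\p C_{n+\log 2}$ — extends the $O(1)$ bound throughout and in particular covers the sphere $\p B(v_0, e^n/2)$. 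The ``in particular'' statement is then automatic, since $0 \in \eta$ forces $r \leq |z| \leq e^k$; alternatively it follows from the origin-centered Beurling estimate as stated, used with $T = \rho_n$ in the same optional-stopping argument.

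The main obstacle is the auxiliary $O(1)$ bound on $a_\eta$, i.e.\ the capacity estimate: without it, the only readily available bound is the crude $a_\eta(y) \leq a(y) = O(n)$, which would introduce an unwanted factor of $n$ that the Beurling factor $r^{1/2} e^{-n/2}$ cannot absorb. The capacity bound itself is standard potential theory, but deserves a brief verification in the discrete setting.
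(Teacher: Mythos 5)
Your argument for the first item is correct and is exactly the intended one: combine \eqref{sep13.4} with \eqref{potential}, bounding $\E^z[a(S_\tau)]$ above by $\max_{w\in\eta}a(w)\leq\frac{2n}{\pi}+k_0+O(e^{-n})$ since $\eta\subset C_n\cup\p C_n$. For the second item, the skeleton --- a translated Beurling estimate centered at the closest point $v_0\in\eta$, followed by optional stopping of the martingale $a_\eta(S_{j\wedge\tau\wedge T})$ --- is also the right route and consistent with the paper's hint that the result is a corollary of \eqref{potential}, \eqref{sep13.4}, and Beurling.

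The gap is in how you justify the auxiliary bound $a_\eta(y)=O(1)$ on the sphere $\p B(v_0,e^n/2)$. You invoke the capacity asymptotic $a_\eta(y)=\frac{2}{\pi}\log|y|-\Cp(\eta)+o(1)$ together with $\Cp(\eta)\geq\frac{2n}{\pi}-O(1)$ and claim this ``yields $a_\eta(y)=O(1)$ for $|y|\leq 2e^n$.'' It does not: the $o(1)$ in the capacity asymptotic is a limit as $|y|\to\infty$ with $\eta$ fixed, and its error term is not uniform in $\eta$ or $n$; it gives you nothing at the finite scale $|y|\asymp e^n$. The subsequent maximum-principle step is then circular --- it requires $O(1)$ boundary values on $\p C_{n+\log 2}$, which is precisely the assertion you are trying to establish, and applying the maximum principle on a larger annulus $C_M\setminus\eta$ only returns $\max_{\p C_M}a_\eta\approx\frac{2M}{\pi}-\Cp(\eta)$, which grows with $M$ rather than shrinking. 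In effect, you have replaced one use of Beurling with a capacity bound whose quantitative form would itself require Beurling.

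The fix is to use the Beurling estimate once more, directly through \eqref{sep13.4}. For $e^n<|y|\leq 2e^n$ write $a_\eta(y)=a(y)-\E^y[a(S_\tau)]$. The reversed Beurling estimate (as stated after the lemma in the paper) gives $\Prob^y\{S_\tau\in C_{n-j}\}\leq c\,e^{-j/2}$, and since $a(w)\geq\frac{2}{\pi}\log|w|+k_0-O(1)$ for $w\neq 0$,
\[
\E^y[a(S_\tau)]\geq\frac{2n}{\pi}-\frac{2}{\pi}\sum_{j\geq 0}\Prob^y\{S_\tau\in C_{n-j}\}-O(1)\geq\frac{2n}{\pi}-O(1),
\]
whence $a_\eta(y)\leq\frac{2\log 2}{\pi}+O(1)=O(1)$ uniformly. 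The maximum principle on $C_{n+\log 2}\setminus\eta$ then propagates this bound to all of $C_{n+\log 2}$, which contains $\p B(v_0,e^n/2)$. With that in hand your optional-stopping step closes the argument, and the ``in particular'' clause follows as you say since $0\in\eta$ forces $\dist(z,\eta)\leq|z|$. So the structure of your proof is sound, but the capacity detour should be replaced by this second application of Beurling.
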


Another simple idea that we will use is the
following.

\begin{lemma} \label{sep13.lemma1}
There exists $c < \infty$,
such that the following hold if $d=2$ and $n,m \geq 1$.

\begin{itemize}

\item 
Let   $V$ be  a connected
subset of $C_n$ of diameter at least $e^{n}/100$.
If $z \in C_{n+1}$, then the probability that
a random walk starting at $z$ reaches $\p C_{n+m}$
without hitting $V$ is less than $c/m$.

\item Let   $V$ be  a connected
subset of $C_{n+m+1} \setminus C_{n+m}$ of diameter at least $e^{n+m}/100$.
If $z \in \Z^2 \setminus
C_{n+m-1}$, then the probability that
a random walk starting at $z$ reaches $\p C_{n}$
without hitting $V$ is less than $c/m$.

\end{itemize}

\end{lemma}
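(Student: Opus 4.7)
The plan is to prove both assertions by applying the optional stopping theorem to the discrete potential kernel $a_V$ of the obstacle. The common intuition is that in $\Z^2$, a random walk that must avoid a macroscopic connected set $V$ while crossing $m$ logarithmic scales succeeds with probability $\asymp 1/m$, reflecting the logarithmic-capacity behavior of planar walk. In the first assertion $V$ sits at the inner end of the range of scales, and in the second at the outer end; the two assertions are essentially dual, so I focus on the first.

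Set $h(x) = a_V(x)$, the function harmonic on $\Z^2 \setminus V$, vanishing on $V$, and asymptotic to $(2/\pi)\log|x|$ at infinity. Before $T = \tau_V \wedge \rho_{n+m}$ the walk stays in $C_{n+m}$, where $h$ is bounded, so optional stopping applies directly and gives
\[ h(z) = \E^z\bigl[h(S_{\rho_{n+m}})\,1\{\rho_{n+m} < \tau_V\}\bigr] \geq \Prob^z\{\rho_{n+m} < \tau_V\} \cdot \min_{w \in \p C_{n+m}} h(w). \]
Hence it suffices to establish, uniformly in $V$: (i) $h(z) \leq C$ for $z \in C_{n+1}$, and (ii) $h(w) \geq cm$ for $w \in \p C_{n+m}$.

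Both bounds reduce to the capacity estimate $\Cp(V) = (2/\pi)n + O(1)$. The upper bound $\Cp(V) \leq (2/\pi)n + O(1)$ is immediate from monotonicity (enlarging the vanishing set lowers $a_V$, hence raises the shift $\Cp$) together with $V \subset C_n$ and $\Cp(C_n) = (2/\pi)n + O(1)$. The lower bound uses connectedness and $\diam V \geq e^n/100$: up to additive constants $V$ dominates a line segment of length $e^n/100$, whose capacity is $(2/\pi)n + O(1)$. Combined with the asymptotic $h(x) = (2/\pi)\log|x| - \Cp(V) + o(1)$ valid for $|x| \gg \diam V$, this immediately yields (ii), and yields (i) at $z \in \p C_{n+3}$, say. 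The $O(1)$ bound is then propagated from $\p C_{n+3}$ to the whole of $C_{n+1} \setminus V$ by Harnack's principle for discrete harmonic functions, combined with Lemma~\ref{lemma.sep8.0} near $V$.

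The second assertion follows the same template with $V$ at the outer scale: $\Cp(V) = (2/\pi)(n+m) + O(1)$, and the same asymptotic plus Harnack argument give $|h(z)| = O(1)$ for $z \in \Z^2 \setminus C_{n+m-1}$ and $h(w) \leq -(2/\pi)m + O(1)$ for $w \in \p C_n$. Applying optional stopping at $T = \tau_V \wedge \tau_{\p C_n}$, with an outer cutoff $\rho_{n+m+M}$ whose contribution vanishes as $M \to \infty$ (by another application of the first assertion, applied from scale $n+m$ outward), gives $\Prob^z\{\tau_{\p C_n} < \tau_V\} = O(1/m)$. The main obstacle in either case is step (i): obtaining the $O(1)$ bound on $h(z)$ uniformly in $V$ when $|z|$ is only a bounded factor above $\diam V$, since the asymptotic alone carries a $o(1)$ error that is not yet uniform in $V$ at this scale; once this uniform bound is secured, the argument is routine.
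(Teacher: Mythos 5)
The paper's proof is a short iteration argument: from any $z\in C_{n+1}$ the walk hits $V$ before $\p C_{n+2}$ with probability bounded below (planarity + invariance principle), from $\p C_{n+2}$ the probability of reaching $\p C_{n+m}$ before returning to $C_{n+1}$ is $O(1/m)$ (gambler's ruin on the logarithmic scale), and the inequality $q \leq (1-c_1)[c_2 m^{-1} + q]$ closes the loop. Your potential-kernel route is a legitimate alternative for the first assertion, but it has two genuine problems.

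First, the capacity lower bound $\Cp(V)\geq (2/\pi)n - O(1)$, which you obtain by saying "$V$ dominates a line segment of length $e^n/100$," is not justified by any monotonicity: $V$ is an arbitrary connected set and need not contain, or be comparable to, a segment. The true statement — that a connected set of diameter $\geq \epsilon e^n$ has capacity $(2/\pi)n + O_\epsilon(1)$ — is a nontrivial discrete analogue of the classical fact that a continuum of diameter $D$ has logarithmic capacity $\geq D/4$ (Koebe), and the cleanest discrete proof of it is precisely the invariance-principle-plus-planarity iteration that the paper uses. So either you invoke nontrivial classical potential theory plus an invariance transfer (which should then be made explicit), or your argument quietly reduces to the paper's. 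Also, step (i) is simpler than you make it: once $a_V \leq C$ on $\p C_{n+3}$, the maximum principle on $C_{n+3}\setminus V$ (where $a_V$ is harmonic and vanishes on $V$) gives the bound on all of $C_{n+1}\setminus V$ directly; Harnack and Lemma~\ref{lemma.sep8.0} are not needed.

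Second, the treatment of the second assertion is incorrect. You assert $h(w) \leq -(2/\pi)m + O(1)$ for $w\in\p C_n$, but $h=a_V\geq 0$ always. In fact, for $V\subset C_{n+m+1}\setminus C_{n+m}$ and $w\in\p C_n$ one has $a_V(w)=O(1)$ (and likewise $a_V(z)=O(1)$ for $z$ near $V$ on the outer side); the $m$-dependence does not appear in $a_V$ at these two locations. The optional-stopping argument built on $a_V$ alone therefore does not yield the $1/m$ decay. A potential-kernel proof can be salvaged by using $g(x)=a(x)-a_V(x)$ (harmonic off $\{0\}\cup V$, bounded, $\approx (2/\pi)(n+m)$ both on $V$ and at infinity, $\approx (2/\pi)n$ on $\p C_n$), but this is a different function from the one you use, and the cutoff argument you sketch (applying the first assertion "from scale $n+m$ outward") only shows the outer contribution is $O(1)$, not that it vanishes. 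As written, the second assertion is not proved.
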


\begin{proof}  We will do the first; the proof
of the second is similar.
The key fact is that there exist
universal $0 < c_1 < c_2 < \infty$ such that 
\begin{itemize}
\item   
the probability that random walk starting at $z
\in C_{n+1}$ hits
$V$ before reaching $\p C_{n+2}$ is greater than $c_1$ (this can be shown, say, by the invariance principle and
a simple topological argument using planarity);
\item  the probability that a random walk starting
at $w\in \p C_{n+2}$ reaches $\p C_{n+m}$ before
hitting $C_{n+1}$ is less than $c_2/m$. 

\end{itemize}
If $q$ denotes the maximum over $z \in C_{n+1}$
of the probability of reaching $\p C_{n+m}$ before
hitting $V$, we get the  inequality
\[      q \leq (1-c_1) \, [c_2\, m^{-1} + q].\]

\end{proof}

We will also use the following estimate of the
transience of two-dimensional random walk conditioned
to never return to the origin.

\begin{lemma}  \label{lemma.sep8}
Suppose $d=2$ and $\tilde S_j$
is a simple random walk conditioned to never
return to the origin.  For $0 < r \leq 2$,
if $z \in \p C_{n+r}$,
then an $n \rightarrow \infty$,
\[    \Prob^z\{\tilde S[0,\infty) \cap C_n  =  \eset \}
=   \frac r n + O\left(\frac{r }{n^2}\right). \]
Moreover, there exists $c < \infty$
such that  if  $\eta \in \saws_n$,
\[ \Prob^z\{\tilde S[0,\infty) \cap \eta = \eset \}
 \leq \frac{cr}{n}. \]
\end{lemma}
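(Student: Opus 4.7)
The plan is to interpret $\tilde S$ as Doob's $h$-transform of simple random walk with $h = a$ (the potential kernel), so that for any set $A \ni 0$ and any $z \notin A$,
\[
\tilde{\Prob}^z\{\tilde S[0,\infty)\cap A = \eset\} = \frac{a_A(z)}{a(z)}.
\]
This identity comes from writing
\[
\tilde{\Prob}^z\{\tilde S[0,\tilde \rho_N]\cap A = \eset\} = \frac{\E^z[a(S_{\rho_N});\,\rho_N < T_A]}{a(z)},
\]
optional-stopping the martingale $a(S_\cdot)$ (well-defined off $\{0\}$, and $T_A < T_0$ since $0\in A$), and passing $N\to\infty$ via dominated convergence using boundedness of $a$ on $A$. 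Once this identity is in hand, both parts are reduced to asymptotic estimates of $a_A(z)/a(z)$.

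For the first equality, take $A = C_n$. The entry point $S_{T_{C_n}}$ lies in $\p_i C_n$, so $|S_{T_{C_n}}| = e^n + O(1)$ and \eqref{potential} gives $a(S_{T_{C_n}}) = \tfrac{2}{\pi}n + k_0 + O(e^{-n})$ uniformly. Hence $a_{C_n}(z) = a(z) - \E^z[a(S_{T_{C_n}})] = \tfrac{2}{\pi}r + O(e^{-n})$, and combining with $a(z) = \tfrac{2}{\pi}(n+r) + k_0 + O(e^{-n})$ and expanding in $1/n$ yields $\tfrac{r}{n} + O(\tfrac{r}{n^2})$.

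For the second estimate the same identity gives $\tilde{\Prob}^z\{\tilde S[0,\infty)\cap\eta = \eset\} = a_\eta(z)/a(z)$, and since $a(z) \geq \tfrac{2}{\pi}n + O(1)$, it suffices to prove the uniform bound $a_\eta(z) \leq Cr$ for $\eta \in \saws_n$. My plan is to stop the walk at $\sigma = T_\eta \wedge T_{C_{n-1}}$ and use optional stopping of the bounded martingale $a_\eta(S_{\cdot\wedge\sigma})$, together with $a_\eta = 0$ on $\eta$, to obtain
\[
a_\eta(z) = \E^z[a_\eta(S_{T_{C_{n-1}}});\, T_{C_{n-1}} < T_\eta].
\]
Lemma \ref{lemma.sep8.0}, applied at the entry point $S_{T_{C_{n-1}}} \in C_{n-1}$, bounds the integrand by $c\,e^{-1/2}$, reducing the question to the estimate $\Prob^z\{T_{C_{n-1}} < T_\eta\} \leq Cr$. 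For this I plan a two-scale argument: once the walk reaches $\p_i C_n$, the reversed Beurling estimate applied to the connected crossing of $\eta$ through the annulus $C_n\setminus C_{n-1}$ (which has diameter $\gtrsim e^{n-1}$, since $\eta$ joins $0$ to $\p C_n$) gives probability at most a constant $c' < 1$ of reaching $\p C_{n-1}$ without hitting $\eta$; the factor of $r$ then has to come from the probability that the walk from $z \in \p C_{n+r}$ ever accumulates enough such "inward crossing attempts" before drifting away, which I control by an excursion decomposition between $\p C_{n+r}$ and $\p C_n$ together with the first part of the lemma applied to the base simple random walk.

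The main obstacle is proving the linear-in-$r$ dependence in $\Prob^z\{T_{C_{n-1}} < T_\eta\} \leq Cr$. The constant bound $\leq c'$ is straightforward from Beurling alone (giving only $a_\eta(z) = O(1)$), but extracting the factor $r$ requires showing that the walk from $z$ typically makes only $O(1/r)$ excursions between $\p C_{n+r}$ and $\p C_n$ before escaping outward, and that each excursion is independently charged with a positive probability of being absorbed by $\eta$. Making this uniform in $\eta \in \saws_n$—particularly when the walk's first arrival at $\p C_n$ is close to the terminal vertex $v$ of $\eta$, where the crossing piece of $\eta$ available to Beurling could be shorter—will require pairing the Beurling estimate with the disconnection estimate \eqref{disconnect} to rule out pathological behavior where the walk hugs $\p C_n$ without committing to either escape or absorption.
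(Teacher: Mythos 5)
Your reduction via the $h$-process identity $\tilde\Prob^z\{\tilde S[0,\infty)\cap A = \eset\} = a_A(z)/a(z)$ for $0\in A$ is correct, and your treatment of the first asymptotic coincides with the paper's: the sandwich \eqref{sep13.1} is the same optional-stopping observation, and \eqref{potential} finishes it. So the first part is fine and essentially the paper's argument.

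For the second bound there is a genuine gap, and it sits exactly where you already sense trouble. The identity you write,
\[
a_\eta(z) = \E^z\bigl[a_\eta(S_{T_{C_{n-1}}});\ T_{C_{n-1}} < T_\eta\bigr],
\]
is false: with $\sigma = T_\eta\wedge T_{C_{n-1}}$, the stopped process $a_\eta(S_{\cdot\wedge\sigma})$ is \emph{not} a bounded martingale, since before time $\sigma$ the walk ranges over the unbounded component of $\Z^2\setminus(\eta\cup C_{n-1})$ where $a_\eta(x)\sim\tfrac{2}{\pi}\log|x|$. The uniform-integrability deficit is a nonnegative harmonic function off $\eta\cup C_{n-1}$ vanishing on its boundary and asymptotic to $\tfrac{2}{\pi}\log$, so the correct identity is
\[
a_\eta(z) = \E^z\bigl[a_\eta(S_{T_{C_{n-1}}});\ T_{C_{n-1}} < T_\eta\bigr] + a_{\eta\cup C_{n-1}}(z),
\]
and the extra term, bounded above by $a_{C_{n-1}}(z) = \tfrac{2(r+1)}{\pi}+O(e^{-n})$, is the dominant one when $r$ is small. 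Relatedly, your reduced target $\Prob^z\{T_{C_{n-1}} < T_\eta\}\leq Cr$ is simply false: take $\eta$ with terminal vertex near $\e_n$ and $z\in\p C_{n+r}$ near $-\e_n$ with $r$ tiny; then $z$ is at distance $\asymp e^n$ from $\eta$, and the (unconditioned) walk's probability of reaching $C_{n-1}$ before $\eta$ is bounded away from both $0$ and $1$ uniformly, with no $r$-dependence at all. So the linear-in-$r$ factor is not something this approach produces. What it does produce, once the stopping identity is corrected, is $a_\eta(z)\leq \tfrac{2(r+1)}{\pi}+c\,e^{-1/2}+O(e^{-n})$ (second term from Lemma \ref{lemma.sep8.0} on $\p_i C_{n-1}$), hence $\Prob^z\{\tilde S[0,\infty)\cap\eta=\eset\}\leq c/n$ uniformly for $0<r\leq 2$, which is the bound the paper actually uses. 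The paper's own proof of this half is a one-line reference to the recursion of Lemma \ref{sep13.lemma1}; the potential-kernel route you propose is a reasonable alternative once the missing $a_{\eta\cup C_{n-1}}$ term is restored and the spurious $r$ in the target is dropped.
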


\begin{proof}  From the definition of an $h$-process,
and the fact that the unconditioned walk reaches
$C_n$ with probability one, 
we see that
\begin{equation}  \label{sep13.1}
\frac{\min\{a(x): x \in \p_i C_n\}}{a(z)}
\leq  \Prob^z\{\tilde S[0,\infty) \cap C_n \neq  \eset \}
 \leq   \frac{\max\{a(x): x \in \p_i C_n\}}{a(z)}.
 \end{equation}
Using \eqref{potential} we see
that if  $x \in \p_iC_n$,
\[   a(x) =  \frac{2n}{\pi}  + k_0 + O(e^{-n}) , \]
and
\[   a(z) = \frac{2(n+r)}{\pi} + k_0 + O(e^{-n}).\]
Therefore, both the left and right hand sides of
\eqref{sep13.1}   equal
\[           1 - \frac{r + O(e^{-n})}{n+ r + (\pi k_0/2)  } 
= 1 - \frac{r}{n}  + O\left(\frac{r }{n^2}\right).\]
This gives the first inequality and the second is
done similarly to the previous lemma.
\end{proof}

\subsection{Loop measures}  \label{loopsec}

Here we review some facts about the random walk loop
measure and its relation to LERW; for more details see,
\cite[Chapter 9]{LL}.
We will consider the loop measures  
in $\hat \Z^d:= \Z^d \setminus \{0\}$.  (If we were considering
only the transient case $d \geq 3$, it would be a little easier
to consider the   loop measure in $\Z^d$; however, for $d=2$,
we need to restrict to  $\hat \Z^2$ and this
also works for $d \geq 3$, so we will use this approach.)
A rooted loop is a nearest neighbor path
\[   l = [l_0,l_1,\ldots,l_{2n}] \]
in $\hat \Z^d $ with $n >0$ and $l_0 = l_{2n}$.
The {\em rooted loop measure}  $\tilde m$ is the measure on rooted loops
that assigns measure $[2n(2d)^{2n}]^{-1}$ to each 
loop of $2n$ steps.  An unrooted loop $\ell$ is
an equivalence class of loops under the 
relation
\[  [l_0,l_1,\ldots,l_{2n}] \sim
[l_1,\ldots,l_{2n},l_1] \sim [l_2,l_3,
\ldots,l_{2n},l_1,l_2] \sim \cdots.\]
The {\em (unrooted) loop measure} $m$ is the measure
induced on unrooted loops by the rooted
loop measure
\[       m(\ell) = \sum_{l \in \ell} \tilde m(l).\]
If $B \subset A \subset \hat \Z^d $ , 
we set 
\[   F_B(A) = \exp \left\{\sum_{\ell \subset A, \ell
\cap B \neq \eset }  m(\ell) \right\} =
\exp \left\{\sum_{l \subset A, l
\cap B \neq \eset }  \tilde m(l) \right\} .\]
In other words, $\log F_B(A)$ is the (loop) measure of loops
in $A$ that intersect $B$. 
An equivalent definition (see, e.g., \cite[Propositions 9.3.1, 9.3.2]{LL})
can be given by setting
$B = \{y_1,\ldots,y_m\}, A_k = A \setminus \{y_1,\ldots,y_{k-1}\}$,
in which case
\[   F_B(A) = \prod_{k=1}^{m } G_{A_k}(y_k,y_k), \]
where $G_{A_k}$ denotes the usual random walk Green's
function on $A_k$.
If $B \not\subset A$, we set
$F_B(A) = F_{B \cap A}(A)$.

A {\em loop soup} is a Poissonian realization from
the loop measure.  In particular, if $\loop$ is a set
of loops, then the probability that the loop soup
contains no loop from $\loop$ is 
$   \exp\{-m(\loop)\}.$
When giving measures of sets $\loop$
of loops, one can either give
$m(\loop)$ or one can give the probability of at least one loop,
$1-e^{-m(\loop)}$, and  for small $m(\loop)$
these are equal up to an
error of order  $O(m(\loop)^2)$  (Loops soups with various
intensities are studied in relation to other
models.  In this paper we consider only
the soups with intensity one that corresponds to
the loop-erased walk.)

The next lemma gives a useful way to compute
loop measures of certain sets of loops (see \cite[Section 9.5]{LL}).
If $x \in V$ and we wish to obtain a realization of the loop
soup (with intensity $1$) restricted to loops in $V$ that intersect
$x$ we can do the following:
\begin{itemize}
\item  Start a random walk $S$ at $x$ and stop it at the first  time
$T$ 
that it exits $V$ (for $d \geq 3$, this can be infinity).
\item  Let $\sigma$ be be the last time before $T$ that the
walk is at $x$. This gives a loop $S[0,\sigma]$.  
We can decompose this as a  finite union of   loops that return to
  $x$ only once.
\end{itemize}
 The next lemma follows from this
observation.

\begin{lemma}  \label{sep15.lemma1}
Suppose $A,B$ are disjoint subsets of $\hat \Z^d
.$
Suppose $A$ is finite and the points in $A$  are ordered 
$A = \{x_1,x_2,\ldots,x_n\}$.
Let $A_j
= A \setminus \{x_1,\ldots,x_{j-1}\}$. 
For each $j$, let $S_k$ be a simple random walk
starting at $x_j$ and let \[ T^j = \max\{k: S_k = x_j,
S[0,k]  \cap \{0,x_1,\ldots,x_{j-1}\} = \eset \}.\]  Then
the probability that  a loop soup contains  a loop
in $A_j$ that intersects both $x_j$ and $B$ is 
\[ \Prob^{x_j}\{S[0,T^j] \cap B  \neq  \eset\}.\]
In particular,   the probability that the loop soup
contains a loop  that intersects both   $A$
and $B$ is bounded above by 
\begin{equation}  \label{jun12.2}
\sum_{j=1}^n  \Prob^{x_j}\{S[0,T^j] \cap B   \neq \eset
\}
\end{equation}
\end{lemma}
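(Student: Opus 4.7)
The plan is to apply directly the last-exit recipe for the loop soup described in the paragraph immediately preceding the lemma. Fix $j$ and take $V = \hat\Z^d \setminus \{x_1,\ldots,x_{j-1}\}$. That recipe says the sub-soup consisting of loops contained in $V$ and passing through $x_j$ can be built from a single simple random walk $S$ started at $x_j$: run $S$ until the first exit $T$ from $V$ (which by definition of $V$ is the first hit of $\{0,x_1,\ldots,x_{j-1}\}$), let $\sigma$ be the last visit to $x_j$ strictly before $T$, and split $S[0,\sigma]$ into its excursions from $x_j$ to $x_j$; each such excursion, viewed as an unrooted loop, is one of the loops in the sub-soup. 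Since $\sigma$ coincides with $T^j$ as defined in the lemma, and since the vertex-trace of $S[0,T^j]$ is exactly the union of the vertex-traces of the excursion loops, the event ``the soup contains a loop in $V$ through $x_j$ that intersects $B$'' is precisely the event $\{S[0,T^j]\cap B \neq \eset\}$. This gives the first (equality) statement.

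For the union bound, I would partition loops that hit $A$ by the smallest index of an $A$-vertex they contain. Any loop $\ell$ with $\ell\cap A\neq\eset$ has a well-defined smallest $j_\ell = \min\{k : x_k\in\ell\}$, and then automatically $\ell \subset \hat\Z^d\setminus\{x_1,\ldots,x_{j_\ell-1}\}$ and $x_{j_\ell}\in\ell$. Consequently the event ``the soup contains some loop intersecting both $A$ and $B$'' is contained in the union over $j=1,\ldots,n$ of the events handled by the first part of the lemma, and a trivial union bound gives the displayed inequality \eqref{jun12.2}.

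The only substantive ingredient is the last-exit identification itself, which is the content of \cite[Chapter 9]{LL} and reflects the identity $G_V(x,x) = \sum_k \tilde m(\text{loops rooted at }x\text{ in }V\text{ with exactly one visit to }x)^{*k}$ underlying the factorization $F_B(A)=\prod_k G_{A_k}(y_k,y_k)$ recalled just above the lemma. Once that recipe is accepted, the lemma is careful bookkeeping and I do not foresee any real obstacle; its purpose is to package the recipe in a form convenient for controlling loop-soup contributions in the coupling argument of Section \ref{couplesec}.
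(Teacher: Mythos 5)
Your proof is correct and is exactly the argument the paper intends: the paper simply asserts that the lemma "follows from this observation" (the last-exit loop-soup recipe given just before), and you have filled in the two details it leaves implicit, namely the identification $\sigma = T^j$ when the recipe is applied with $V = \hat\Z^d \setminus \{x_1,\ldots,x_{j-1}\}$, and the partition of the relevant loops by the smallest $A$-index they contain, followed by a union bound.
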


Note that  $\Prob^{x_j}\{S[0,T^j] \cap B \neq  \eset\}$
is the probability that a simple
random walk starting at $x_j$
reaches $B$ and then returns to $x_j$ without visiting
$\{0,x_1,\ldots,x_{j-1}\}$.  The bound \eqref{jun12.2}
holds regardless of which ordering of the vertices
of  $A$ is used.  The proofs of the following two
lemmas are simply to that in \cite[Lemma 11.3.3]{LL}.

\begin{lemma} \label{june6.lemma1}
There exists
$c = c(d)  < \infty$ such that for all $r,n \geq 0$,
the probability that the loop soup contains
a loop  
that intersects both $C_n$ and $\Z^d
\setminus C_{n+r}$ is bounded above by $c \, e^{r(2-d)}$.
\end{lemma}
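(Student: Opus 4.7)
Since the loop soup is a Poisson point process with intensity $m$, the probability that it contains a loop intersecting both $C_n$ and $B := \Z^d \setminus C_{n+r}$ is at most the total loop measure
\[
m(\mathcal L) := m\bigl(\{\ell \subset \hat \Z^d : \ell \cap \hat C_n \neq \eset,\ \ell \cap B \neq \eset\}\bigr).
\]
The natural scale of $m(\mathcal L)$ in $d \geq 3$ is $\Cp(C_n) \cdot G(\cdot,B)$, i.e.\ ``capacity of $C_n$ times Green's function from the outer boundary'', which is $\asymp e^{n(d-2)} \cdot e^{-(n+r)(d-2)} = e^{r(2-d)}$.

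To make this rigorous for $d \geq 3$, I would use the inclusion identity
\[
m(\mathcal L) \;=\; \log F_{\hat C_n}(\hat \Z^d) \;-\; \log F_{\hat C_n}(\hat C_{n+r}),
\]
which holds because loops meeting $\hat C_n$ but confined to $\hat C_{n+r}$ are precisely those meeting $\hat C_n$ and avoiding $B$. Expanding each $F$ via the product formula $F_V(W)=\prod_k G_{W_k}(x_k,x_k)$ of Section~\ref{loopsec} and using the basic estimate
\[
G_{\hat \Z^d \setminus E}(x,x) \;-\; G_{\hat C_{n+r}\setminus E}(x,x) \;\leq\; \E^x\bigl[G(S_{\tau_{\p C_{n+r}}},x)\,\mathbf 1\{\tau_{\p C_{n+r}} < \infty\}\bigr] \;\lesssim\; e^{(n+r)(2-d)},
\]
valid for any set $E$ of earlier vertices and any $x \in \hat C_n$ (by the transient bound $G(y,x)\lesssim|y-x|^{2-d}$), one organises the resulting sum so that the \emph{capacity} $\Cp(C_n) \asymp e^{n(d-2)}$ appears, rather than the surface area $|\p_i C_n|\asymp e^{n(d-1)}$ or the volume $|\hat C_n|\asymp e^{nd}$. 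This is exactly the capacity accounting carried out in \cite[Lemma~11.3.3]{LL}, and yields $m(\mathcal L)\lesssim \Cp(C_n)\cdot e^{(n+r)(2-d)} \asymp e^{r(2-d)}$.

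For $d=2$ the target bound is merely a constant, so the conclusion is essentially trivial ($\Prob \leq 1$); a proof parallel to the $d\geq 3$ argument substitutes the potential kernel $a$ (Lemma~\ref{lemma.sep8.0}) for the Green's function and uses the Beurling estimate in place of transience to control the return probabilities.

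The main obstacle in the $d\geq 3$ case is the capacity accounting. The naive alternatives both fail: a direct union bound via Lemma~\ref{sep15.lemma1} with $A=\p_i C_n$ produces $\sum_{x\in\p_i C_n}\Prob^x\{S\text{ visits }B\text{ and returns}\}\lesssim |\p_i C_n|\cdot e^{(n+r)(2-d)} = c\, e^{n+r(2-d)}$, off by $e^n$; while a naive term-by-term expansion of the product formula, bounding each ratio by $1+ c\,e^{(n+r)(2-d)}$ and summing over all $|\hat C_n|\asymp e^{nd}$ vertices, is off by $e^{2n}$. The correct argument groups the log-ratios so that the summation picks up only the effective capacity $\Cp(C_n)$, and this regrouping is the content of the cited lemma.
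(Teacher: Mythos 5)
Your outline correctly identifies the analytic framework (the identity $m(\mathcal L) = \log F_{\hat C_n}(\hat\Z^d) - \log F_{\hat C_n}(\hat C_{n+r})$ and the Green's-function expansion) and correctly diagnoses that both of your naive summations give the wrong power of $e^n$. However, you then defer the entire resolution --- the ``capacity accounting'' --- to the cited reference, whereas this is precisely the non-trivial content of the lemma, and the paper's proof does supply it explicitly.

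The missing idea is the following. The paper applies Lemma~\ref{sep15.lemma1} with $A = C_n$, the \emph{whole} ball (not $\p_i C_n$), and orders the vertices $x_1, x_2, \ldots$ by increasing norm. This is crucial: the excursion from $x_j$ in the definition of $T^j$ must avoid $\{0, x_1, \ldots, x_{j-1}\}$, which by the ordering is essentially the ball of radius $|x_j|$; the walk therefore pays an escape penalty of order $|x_j|^{-1}$ to reach scale $2|x_j|$, then a factor $|x_j|^{d-2} e^{(n+r)(2-d)}$ to return to scale $2|x_j|$ from outside $C_{n+r}$, then a factor $|x_j|^{1-d}$ to hit $x_j$ before the earlier points. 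Each term is therefore $O\bigl(|x_j|^{-2} e^{(n+r)(2-d)}\bigr)$, not merely $O\bigl(e^{(n+r)(2-d)}\bigr)$, and since $\sum_{0 < |x| < e^n} |x|^{-2} \asymp e^{n(d-2)} \asymp \Cp(C_n)$ the union bound already gives $c\, e^{r(2-d)}$. Your phrasing (``groups the log-ratios so that the summation picks up only the effective capacity'') suggests a reorganization of the sum; in fact no regrouping occurs --- the union bound is applied term by term, and the improvement is a sharper \emph{per-term} estimate supplied by the ordering. Note the counterintuitive point you seem to have dismissed: enlarging $A$ from $\p_i C_n$ to all of $C_n$ makes the bound \emph{smaller}, because the avoidance of the earlier (inner) vertices is what produces the extra $|x_j|^{-2}$ decay. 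Your Green's-function/log-ratio framework can be pushed through with exactly this per-term estimate, but as written your proposal stops just short of the argument.
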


\begin{proof}  
We assume $r \geq 2$ and $d \geq 3$ for
the other cases are trivial.  Let $B=\Z^d
\setminus C_{n+r}$.   We write $C_n = \{0,x_1,\ldots,
x_N\}$ where the vertices are ordered so that
$|x_j|$ is nondecreasing.  Let $A_j = \Z^d \setminus
\{x_1,\ldots,x_{j-1}\}$. 
If we start at $x=x_j \in \hat C_n$, then the probability that
it reaches distance $2|x|$ from the origin 
without leaving $A_j$ is $O( |x|^{-1})$; 
the probability that after it leaves $C_{n+r}$
it returns to within distance $2|x|$ of the origin is $O(|x|^{d-2}\, e^{ (n+r)(2-d)})
$;
and given that, the probability of hitting $x$ before any
point in $A_j $  is $O(|x|^{1-d})$.
Hence,
\[  \Prob^{x_j}\{S^j[0,T^j] \cap B \neq \eset\} 
\leq   c \, |x_j|^{-2} \, e^{-(n+r)(d-2)}.\]
Summing over $0 < |x| < e^n$, we get
\[    \sum_{x_j \in C_n}
\Prob^{x_j}\{S^j[0,T^j] \cap B \neq \eset\}  \leq c \, e^{n(d-2)}
\, e^{ (n+r)(2-d) } \leq c\, e^{r(2-d)}.\]

\end{proof}

\begin{lemma} \label{june6.lemma1.2d}
If $d = 2$, there exist 
$c   < \infty$ such that for all $r,n \geq 0$,
the probability that the loop soup contains
a loop  
that intersects both $C_n$ and $\Z^d
\setminus C_{n+r}$  and does not disconnect
$C_n$ from $\p C_{n+r}$
is bounded above by $c \, e^{-r/2}$.
\end{lemma}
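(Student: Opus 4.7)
The plan is to adapt the 3D proof of Lemma \ref{june6.lemma1} to the two-dimensional setting, using the disconnection estimate \eqref{disconnect} as the replacement for transience. Order the vertices of $\hat C_n$ by nondecreasing distance from the origin as $\{x_1, \ldots, x_N\}$ with $|x_j| \asymp e^{k_j}$, and set $A_j = \hat \Z^2 \setminus \{x_1, \ldots, x_{j-1}\}$. Extending Lemma \ref{sep15.lemma1} with the additional non-disconnection constraint reduces the claim to a union bound $\sum_j P_j$, where
\[
P_j := \Prob^{x_j}\{S[0, T^j] \cap B \neq \eset \text{ and } S[0, T^j] \text{ does not disconnect } 0 \text{ from } \p C_{n+r}\},
\]
with $B = \Z^2 \setminus C_{n+r}$.

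The key topological observation is that if the full loop $S[0, T^j]$ does not disconnect, then neither does any subpath, since enlarging the obstruction set can only remove connections from the complement. In particular, both the outward leg $S[0, \tau]$ (where $\tau = \min\{k: S_k \in B\}$) and, after time reversal, the return leg $S[\sigma, T^j]$ (where $\sigma = \max\{k \leq T^j: S_k \in B\}$) are individually non-disconnecting. Applying \eqref{disconnect} to each leg and decoupling them via the strong Markov property at $\tau$ and $\sigma$ yields a joint non-disconnection factor of $c\,e^{(k_j - n - r)/2}$, doubling the single-leg disconnection exponent.

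I bound the remaining excursion factor by $a_j \cdot b_j$, where $a_j = O(1/(n+r-k_j))$ is the log-scale gambler's-ruin probability that the walk from $x_j$ reaches $B$ before returning (obtained from the potential-kernel asymptotics behind Lemma \ref{lemma.sep8.0}), and $b_j$ is the probability that a walk from $S_\tau \in B$ first hits $\{0\} \cup \{x_1, \ldots, x_j\}$ at $x_j$. The crucial summation identity is $\sum_j b_j \leq 1$: for any fixed walk started at a point in $B$, at most one index $j$ satisfies this first-hit event (the $j$ for which $x_j$ is the first element of $\hat C_n$ visited), so the sum telescopes to the total probability of hitting $\hat C_n$ before $0$, which is bounded by one. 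Combining, and using $a_j \leq c/r$ for $k_j \leq n$,
\[
\sum_j P_j \;\leq\; \frac{c}{r}\cdot e^{-r/2}\cdot \sum_j b_j \;\leq\; c\,e^{-r/2}.
\]

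The main obstacle is rigorously justifying the joint non-disconnection factor $e^{(k_j-n-r)/2}$, since the return-to-$x_j$ event conditions the walk in ways that could in principle inflate the non-disconnection probability of either leg. To handle this, I would write $P_j$ as a double sum over the endpoints $w_1 = S_\tau$ and $w_2 = S_\sigma$, apply \eqref{disconnect} unconditionally to each leg between its own endpoints, and then repackage the remaining summands into $a_j$ and $b_j$ via a last-exit/hitting-measure decomposition on each side. A secondary technical point is that \eqref{disconnect} is stated for walks starting in a ball $C_k$ and stopped at $\p C_n$, whereas our legs start at the specific vertex $x_j$ and stop at $\p C_{n+r}$; an elementary extension of its proof (or monotonicity in the starting point via harmonic measure) supplies the bound we need.
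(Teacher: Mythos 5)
Your overall strategy — order the points of $\hat C_n$ by radius, apply Lemma \ref{sep15.lemma1}, split each excursion into an outgoing and incoming leg, and use the disconnection estimate \eqref{disconnect} in place of transience — is exactly the paper's strategy. But two of your key steps have genuine gaps. First, the claimed identity $\sum_j b_j \leq 1$ is false: the event ``the walk from $w$ hits $x_j$ before $\{0,x_1,\ldots,x_{j-1}\}$'' is \emph{not} the same as ``$x_j$ is the first element of $\hat C_n$ visited.'' The former allows the walk to visit $x_k$ with $k>j$ beforehand, so the events for different $j$ are not disjoint; indeed, a walk that hits the $x_j$ in decreasing order of $j$ realizes \emph{all} of them simultaneously, and in general $\sum_j b_j$ grows like $n$ (one unit per dyadic scale, which is essentially the telescoping-capacity computation in the paper's preceding lemma). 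Second, your $a_j=O(1/(n+r-k_j))$ drops a factor of $|x_j|^{-1}$: the walk starts at lattice distance $1$ from the dense set $\{0,\ldots,x_{j-1}\}\approx C_{k_j}$, so the probability of escaping it at all carries an extra $|x_j|^{-1}$ before the log-scale gambler's ruin kicks in. You acknowledge that decoupling the two-leg disconnection factor from the avoidance constraint is the ``main obstacle,'' and the proposed double-sum over $(w_1,w_2)$ does not by itself resolve it, because the avoidance of $\{0,\ldots,x_{j-1}\}$ and the non-disconnection constraint live on the \emph{same} legs in your two-piece decomposition.

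The paper sidesteps all three issues by using a six-piece decomposition rather than two: from $x_j$ out to $\p C_{\log 2|x_j|}$ avoiding $\{0,\ldots,x_{j-1}\}$ (probability $O(|x_j|^{-1})$), then out to $\p C_n$ avoiding that set (gambler's ruin $O((n-\log|x_j|)^{-1})$), then out to $\p C_{n+r}$ without disconnecting ($O(e^{-r/4})$ from \eqref{disconnect} with $k=n$), then back in through $\p C_n$ without disconnecting ($O(e^{-r/4})$), then back to scale $2|x_j|$ ($O((n-\log|x_j|)^{-1})$), then hitting $x_j$ before $A_j$ ($O(|x_j|^{-1})$). Because the avoidance probabilities live entirely inside $C_n$ and the disconnection factors live entirely outside $C_n$, the strong Markov property cleanly decouples them, and the resulting bound $c\,|x_j|^{-2}(n+1-\log|x_j|)^{-2}e^{-r/2}$ sums over $j$ to $O(e^{-r/2})$ without needing any nontrivial summation identity. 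Your stronger per-leg factor $e^{(k_j-n-r)/4}$ (as opposed to the paper's $e^{-r/4}$) could in principle compensate for $\sum_j b_j \asymp n$, but to make that rigorous you would need essentially the paper's intermediate-radius machinery anyway; I would rewrite the argument along the paper's lines.
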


\begin{proof}  This is done similarly using \eqref{disconnect}.
We will say just disconnecting for ``disconnecting
$C_n$ from $\p C_{n+r}$''.
Let $B=\Z^2
\setminus C_{n+r}$, and write  $C_n = \{0,x_1,\ldots,
x_N\}$ where the vertices are ordered so that
$|x_j|$ is nondecreasing.  Let $A_j = \Z^d \setminus
\{x_0,\ldots,x_{j-1}\}$. 
If we start at $x=x_j \in C_n$, then the probability that
it reaches distance $2|x|$ from the origin 
without leaving $A_j$ is $O( |x|^{-1})$; given that, the probability
that it reaches $\p C_n$ without leaving $A_j$
is $O((n+1 - \log |x|)^{-1})$; given this,   the probability that
it reaches $\p C_{n+r}$ without disconnecting 
is $O(e^{-r/4})$; given this,
the probability that after it leaves $C_{n+r}$
it returns to $C_n$  without disconnecting   is $O(e^{-r/4})$; given this, the
probability to get within $2|x|$ without disconnecting
 is $O((n+1 - \log |x|)^{-1})$;
and given that, the probability of hitting $x$ before any
point in $A_j $  is $O(|x|^{-1})$.
Hence,
\[  \Prob^{x_j}\{S^j[0,T^j] \cap B \neq \eset, \mbox{no disconnection}\} 
\leq   c \, |x_j|^{-2} \, [n+1-\log |x|]^{-2} \,  e^{-r/2}.\]
Summing over $0 < |x| < e^n$, we get
\[    \sum_{x_j \in C_n}
\Prob^{x_j}\{S^j[0,T^j] \cap B \neq \eset, \mbox{no disconnection}\}  \leq   c\, e^{-r/2}.\]
\end{proof}


\begin{lemma} 
If $d \geq 3$, let  $\loop_n(\delta)$ denote the set of loops that
intersect $C_{n+2} \setminus C_{n-2}$ and have
diameter at least $\delta \, e^n$.  
For every $\delta > 0$, there exists $\epsilon > 0$, such
that for every $n$, the probability that the loop
soup contains
no loop  in $\loop_n(\delta)$  is 
at least $\epsilon$.
\end{lemma}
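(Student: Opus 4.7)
The plan is to use the Poissonian structure of the loop soup: since $m$ is the intensity measure, $\Prob(\text{no loop in } \loop_n(\delta)) = \exp(-m(\loop_n(\delta)))$, so it suffices to bound $m(\loop_n(\delta)) \leq C(\delta)$ uniformly in $n$, and then set $\epsilon = e^{-C(\delta)}$. I will split $\loop_n(\delta)$ according to loop diameter into a ``large'' part (diameter $> 2 e^{n+c}$) and a ``small'' part ($\delta e^n \leq $ diameter $\leq 2 e^{n+c}$), for a constant $c$ chosen later. Any loop in the large part meets both $C_{n+2}$ and $\Z^d \setminus C_{n+c}$, so a measure version of Lemma \ref{june6.lemma1} (invoking $m \leq -\log(1 - \Prob)$) bounds the total measure of this part by $C e^{(c-2)(2-d)}$, a constant independent of $n$.

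For the small part, a simple bounding-box argument shows every such loop lies in $V := \hat C_{n+c+3}$: it meets $\hat C_{n+2} \setminus C_{n-2} \subset C_{n+2}$ and has diameter at most $2 e^{n+c}$. For loops contained in $V$, the rooted/unrooted correspondence yields
\[
m\bigl(\{\ell \subset V : \diam(\ell) \geq r\}\bigr) \;\leq\; \sum_{x \in V} \tilde m_x(\diam \geq r),
\]
where $\tilde m_x$ denotes the rooted loop measure at $x$. The inequality holds because for $\ell \subset V$ every rooting of $\ell$ has its root in $V$, so $\sum_{x \in V} \tilde m_x(\ell) = (k_V(\ell)/|\ell|)\, m(\ell) = m(\ell)$ (here $k_V(\ell) = |\ell|$).

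The main technical ingredient is the scale-invariant bound $\tilde m_x(\diam \geq r) \leq C r^{-d}$. Starting from
\[
\tilde m_x(\diam \geq r) = \sum_{n \geq 1} \frac{1}{2n}\,\Prob^x\bigl(S_{2n} = x,\ \max_{k \leq 2n}|S_k - x| \geq r/2\bigr),
\]
I will combine the local CLT bound $p_{2n}(x,x) \leq C n^{-d/2}$ with Gaussian bridge concentration $\Prob(\max|S_k - x| \geq r/2 \mid S_{2n} = x) \leq C e^{-cr^2/n}$: long loops ($n \gtrsim r^2$) contribute $\sum n^{-1-d/2} \lesssim r^{-d}$, and short loops contribute $\sum n^{-1-d/2} e^{-cr^2/n} \lesssim r^{-d}$ after the substitution $s = r^2/n$. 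Combined with $|V| \asymp e^{(n+c+3)d}$ and $r = \delta e^n$, the small-part contribution is at most $C e^{(c+3)d}/\delta^d$; choosing $c$ to be a fixed constant gives $m(\loop_n(\delta)) \leq C(\delta)$ uniformly in $n$, completing the argument. The rooted measure bound is the main obstacle, and the overall cancellation $|V|/r^d \sim \delta^{-d}$ reflects the scale-invariance of the $d \geq 3$ loop soup.
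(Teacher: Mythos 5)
Your argument is correct, and it is a genuinely different route from what the paper has in mind. The paper's one-line proof says ``Lemma~\ref{june6.lemma1} and a simple covering argument,'' which unpacks as: cover $C_{n+2}\setminus C_{n-2}$ by $O(\delta^{-d})$ balls $B_i$ of radius comparable to $\delta e^n/4$; any loop in $\loop_n(\delta)$ meets some $B_i$ and, by the diameter constraint, must exit the concentric ball $2B_i$; a translated version of Lemma~\ref{june6.lemma1} (whose proof, note, bounds the loop \emph{measure} directly via \eqref{jun12.2}, so you do not actually need the slightly awkward $m\le -\log(1-\Prob)$ step) gives $O(1)$ for each $i$, and summing gives $m(\loop_n(\delta))=O(\delta^{-d})$. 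This handles all scales at once, with no large/small dichotomy. You instead apply Lemma~\ref{june6.lemma1} only to the truly large loops and handle the remaining $O(e^n)$-scale loops by a direct rooted-loop computation: $\tilde m_x(\diam\geq r)\leq Cr^{-d}$, obtained from the local CLT together with Gaussian concentration for the bridge, summed over $|V|\asymp e^{nd}$ roots. This is heavier machinery than the covering argument needs (in particular the bridge concentration is the only place where you need more than elementary estimates; the naive bound $n\geq r$ on the loop length only gives $r^{-d/2}$, which fails), but it is self-contained and scales correctly, landing on the same $O(\delta^{-d})$ bound. Two small points of hygiene: make explicit that you fix $c$ (say $c=3$) at the outset, since both the large-loop bound and the inclusion $\loop^{\mathrm{small}}\subset\hat C_{n+c+3}$ require $c\geq 2$ or so; and note that $C_{n+2}\setminus C_{n-2}$ already avoids the origin so the $\hat{}\,$ is immaterial there.
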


\begin{proof}  This follows from   Lemma \ref{june6.lemma1} and a simple
covering argument. \end{proof}

\begin{lemma}  There exists $c < \infty$ such that
the following holds.  Suppose
$d = 2$, $n,m$ are positive integers, and
$V$ is a simply connected subset of $\Z^2$ with
\[   C_{n+m} \subset V, \;\;\;\; C_{n+m+1} \not \subset V.\]
Let  $K = K_{n,V}$ denotes 
the measure of the set of loops
that lie in $\Z^2 \setminus C_{n-1}$ and intersect both $C_n$
and $\p  V$.  Then, 
\[        \left|K  - \frac 1m \right| \leq 
c \,  \frac{me^{-n} + 1} {m^2}  .\]
\end{lemma}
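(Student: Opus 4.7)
My plan is to proceed by a first-visit decomposition analogous to Lemma~\ref{sep15.lemma1}, adapted to loops in $U:=\Z^2\setminus C_{n-1}$. Enumerate the points of $C_n\cap U=C_n\setminus C_{n-1}$ as $x_1,\ldots,x_N$, and set $U_j:=U\setminus\{x_1,\ldots,x_{j-1}\}$. Using the identity $\log G_D(x,x)=m(\{\ell\subset D:x\in\ell\})$, the loop measure of loops in $U_j$ through $x_j$ that also meet $\p V$ equals $\log(G_{U_j}(x_j,x_j)/G_{U_j\setminus\p V}(x_j,x_j))$, so
\[
K \;=\; \sum_{j=1}^N\log\frac{G_{U_j}(x_j,x_j)}{G_{U_j\setminus\p V}(x_j,x_j)} \;=\; \sum_{j=1}^N \bigl(p_j+O(p_j^2)\bigr),
\]
where $p_j$ is (to first order) the probability that a simple random walk from $x_j$ visits $\p V$ and subsequently returns to $x_j$ before hitting $C_{n-1}\cup\{x_1,\ldots,x_{j-1}\}$. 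Since such a walk must traverse a two-dimensional annulus of modulus $\asymp m$ to reach $\p V$, each $p_j=O(1/m)$, so the tail $\sum_j O(p_j^2)$ contributes $O(1/m^2)$.

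I plan to extract the main term by computing $\sum_j p_j$ through the potential kernel $a_{C_{n-1}}$, which is harmonic on $U$ with $a_{C_{n-1}}(z)=(2/\pi)(\log|z|-(n-1))+O(e^{-n})$ by \eqref{sep13.4} and \eqref{potential}. Since $\p V\subset\Z^2\setminus C_{n+m}$ and the ``near part'' of $\p V$ lies in $C_{n+m+1}\setminus C_{n+m}$, on that near part $a_{C_{n-1}}$ takes value $(2/\pi)(m+r)+O(e^{-n})$ with $r\in[1,2]$. A computation analogous to \eqref{sep13.1}, grouping the excursions by their last exit from $\p V$ and using time-reversibility of the simple random walk, will yield $\sum_j p_j = 1/(m+1) + O(e^{-n}/m + 1/m^2)$; expanding $1/(m+1)=1/m-1/[m(m+1)]$ then identifies the leading term $1/m$, with the remainder $-1/[m(m+1)]$ absorbed into the error.

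The stated bound $c(me^{-n}+1)/m^2 = c/m^2 + ce^{-n}/m$ then breaks naturally into: (i) $c/m^2$ from the quadratic $\log(1+\cdot)$ terms, the $1/m - 1/(m+1)$ discrepancy, and the shape-dependence of $\p V$; and (ii) $ce^{-n}/m$ from the $O(e^{-n})$ lattice correction in \eqref{potential}, aggregated over the $m$-scale of the harmonic-measure computation. The main obstacle will be controlling the shape-dependence of $\p V$ uniformly in $V$: since $V$ is only a simply connected set with $C_{n+m}\subset V\not\supset C_{n+m+1}$, $\p V$ can be quite irregular and may extend far from the origin. To handle this I plan to use Lemma~\ref{sep13.lemma1} together with the Beurling estimate to show that the contribution from far points of $\p V$ is $O(1/m^2)$, while the near portion lies in the thin annulus $C_{n+m+1}\setminus C_{n+m}$ where $a_{C_{n-1}}$ varies by only $O(1/m)$; the resulting $V$-dependence is thus absorbed into the stated error.
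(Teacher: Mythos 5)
Your proposal follows the same overall route as the paper's proof: a first-vertex decomposition of $K$ over the points of $C_n\setminus C_{n-1}$ (your Green's-function identity $K=\sum_j\log\bigl(G_{U_j}(x_j,x_j)/G_{U_j\setminus\p V}(x_j,x_j)\bigr)$ is exactly the content of Lemma~\ref{sep15.lemma1}), a potential-kernel estimate for the main term, and Lemma~\ref{sep13.lemma1} to reduce a general simply connected $V$ to $V=C_{n+m}$.

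Where the plan is genuinely under-specified is the step that actually produces the sharp $\sum_j p_j = (1+O(e^{-n}))/m + O(m^{-2})$. In the paper this comes from a telescoping of capacity increments: writing $\theta_j=(\pi/2)\Cp(A_j)$ and $g_{j-1}=(\pi/2)a_{A_{j-1}}$, the key input is $\hm_{A_j}(z,x_j)=\hm_{A_j}(\infty,x_j)\bigl[1+O(me^{-m})\bigr]=\frac{\theta_j-\theta_{j-1}}{g_{j-1}(x_j)}\bigl[1+O(me^{-m})\bigr]$ for $z\in\p C_{n+m}$, which gives $p_j=\frac{\theta_j-\theta_{j-1}}{m}\bigl[1+O(m^{-1})\bigr]$ and hence $\sum_j p_j$ telescopes to $\bigl(\theta_{k_n}-\theta_{k_{n-1}}\bigr)/m=(1+O(e^{-n}))/m$, up to $O(m^{-2})$. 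This is precisely where the $O(e^{-n}/m)$ term in the claimed bound originates. Your plan to evaluate $\sum_j p_j$ from $a_{C_{n-1}}$ alone, together with a last-exit/reversibility argument, does not visibly account for the set $\{x_1,\ldots,x_{j-1}\}$ that the $j$-th excursion must avoid (the relevant avoided set is $C_{n-1}\cup\{x_1,\ldots,x_{j-1}\}$, not just $C_{n-1}$), and it is exactly the capacity increments that track this dependence; as sketched, the mechanism producing the $1+O(e^{-n})$ numerator is missing. A smaller point: to conclude $\sum_j O(p_j^2)=O(m^{-2})$ you should pair the uniform bound $\max_j p_j=O(1/m)$ (which, as you say, is the gambler's-ruin bound for reaching $\p V$ before $C_{n-1}$) with the a priori bound $\sum_j p_j=O(1/m)$; the former alone does not control a sum with $\asymp e^{2n}$ terms.
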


\begin{proof}

We first consider the case $V = C_{n+m}$.
We order the elements of $\Z^2 = \{x_0 = 0,x_1,x_2,\ldots\}$
so that $|x_0| \leq |x_1| \leq |x_2| \leq \cdots.$    We let $A_k
= \{0,x_1,\ldots,x_k\}$ and define $k_n $ by
$C_{n} = A_{k_n }$ (so that $k_n \sim \pi \, e^{2n}$).
Let $
\theta_k = \pi \, \Cp (A_k)/2$, and $g_k = \pi \, a_{A_k}/2$ 
which is the unique function
that is $0$ on $A_k$, discrete harmonic on $\Z^2 \setminus
A_k$, and satisfies
\[   g_k(z) =   \log |z| -
    \theta_k + o(1) , \;\;\;\; |z| \rightarrow \infty.\]
   If $S_t$ denotes a simple random walk and
   \[  \rho_n = \min\{t: |S_t| \geq e^n\}, \]
then
\[          g_k(z) = \lim_{r \rightarrow \infty}
   r\, \Prob^z\{S[0,\rho_r]
     \cap A_k = \eset \}. \]
     Using well-known estimates (see
     \cite[Proposition 6.4.1]{LL}), we see that
for $z \in C_{n+m+1} \setminus C_{n+m}$, $k_{n-1} \leq k \leq k_n$, 
\[     \lim_{r \rightarrow \infty}
      r \, \Prob^z\{ S[0,\rho_{r}] \cap A_{k-1} = \eset\} =
            m + O(1) . \]
If we write $\Prob,\E$ for $\Prob^{x_k},\E^{x_k}$, we have  
\begin{eqnarray*}
\lefteqn{ g_{k-1}(x_k )}  \\
& = & \lim_{r \rightarrow \infty}
      r \, \Prob \{S[0,\rho_r] \cap A_{k-1} = \eset\} \\
& = & \lim_{r \rightarrow \infty}
      r \, \Prob \{S[0,\rho_{n+m}] \cap A_{k-1} = \eset\}
       \, \Prob  \{S[0,\rho_r] \cap A_{k-1} = \eset
        \mid S[0,\rho_{n+m}] \cap A_{k-1} = \eset\}\\
       & = & \Prob\{S[0,\rho_{n+m}]
         \cap A_{k-1} = \eset\} \, [m + O(1)], 
      \end{eqnarray*}
and hence
\[ \Prob \{S[0,\rho_{n+m}]
         \cap A_{k-1} = \eset\} = \frac{g_{k-1}(x_k)}
         {m} \, \left[1 + O(m^{-1})\right].\]

Note that
\[    g_k(z) - g_{k-1}(z)   = -\hm_{A_{k}}(z,x_k) 
 \, g_{k-1}(x_k) , \]
 where $\hm$ denotes harmonic measure, that is, the
 hitting distribution of $A_k$ starting at $z$.
 We will use the estimate (this follows from
 \cite[Proposition 6.4.5]{LL}),
 \[ 
\hm_{A_k}(z,x_k)  =
    \hm_{A_k}(\infty,x_k) \, \left[1 +
     O\left(me^{-m}  \right)
     \right].\]
(We believe the error is actually $O(e^{-m})$ 
but it would take a little more effort to prove and we
do not need the stronger result.)
Therefore,
\begin{eqnarray*}
\theta_k - \theta_{k-1} & = & \lim_{z \rightarrow
\infty} \left[ g_{k-1}(z)-g_k(z)\right]\\
& = & \hm_{A_k}(\infty,x_k)
 \, g_{k-1}(x_k)\\
 & = & \hm_{A_k}(z,x_k)
 \, g_{k-1}(x_k)\,\left[1 +
     O\left(me^{-m} \right)
     \right].
     \end{eqnarray*}

 Using Lemma \ref{sep15.lemma1} we see that
  the  probability that the loop soup contains a loop including $x_k$,
 lying in $\Z^2 \setminus A_{k-1}$, and also intersecting $\p C_{n+m}$
 is   equal to
 \begin{eqnarray*}
 \lefteqn{ \E^{x_k}\left[\hm_{A_k}(S_{\rho_{n+m}},x_k);
    S[0,\rho_{n+m}] \cap A_{k-1} = \eset\right]}
     \hspace{1in} \\
 & = & \Prob\{
   S[0,\rho_{n+m}] \cap A_{k-1} = \eset\}
   \, \E^{x_k}\left[\hm_{A_k}(S_{\rho_{n+m}},x_k)
     \mid  S[0,\rho_{n+m}] \cap A_{k-1} = \eset\right]\\
   & = & \frac{g_{k-1}(x_k)}{m} \, \frac{\theta_k - \theta_{k-1}}{g_{k-1}(x_k)} \,  [1 + O(m^{-1})]\\
   & = & \frac{\theta_k - \theta_{k-1}}m \, 
    [1 + O(m^{-1})].
 \end{eqnarray*}
%
It follows that the
measure of the set of loops that lie in $\Z^2 \setminus A_{k-1}$,
contain $x_k$, and intersect $\p C_{n+m}$ is
\[     \frac{\theta_k - \theta_{k-1}}{m}   \,
   [1 + O(m^{-1} )] .\]
The capacities of 
$C_{n-1}$ and $C_n$ are well known up
 to a small error
(see \cite[Proposition 6.6.5]{LL}); indeed,
\[    \theta_{k_n} =  \theta_{k_{n-1}} +  1  + O(e^{-n}), \]
and hence
\[ \sum_{j=k_{n-1} + 1}^{n_k} \E^{x_j}\left[\hm_{A_k}(S_{\rho_{n+m}},x_k);
    S[0,\rho_{n+m}] \cap A_{k-1} = \eset\right]
       = \frac{1 + O(e^{-n})}{  m} + O\left(\frac{1 }{m^2}\right).\]

For more general $V$, we use the fact that $V$
is simply connected and $\p V \cap C_{n+m+1} \neq
\eset$ to see that  the probability that a random walk
starting in $\p C_{n+m}$ reaches $\p C_{n+1}$
without hitting $\p V$ is $O(m^{-1})$ (see Lemma
\ref{sep13.lemma1}).  Arguing as
above, we can see that 
the
measure of the set of loops that lie in $\Z^2 \setminus A_{k-1}$,
contain $x_k$,  intersect $\p C_{n+m}$, but do
{\em not} intersect $\p V$ is $O(m^{-2})$. 
       \end{proof}

\begin{lemma}   \label{sep14.lemma1}  Suppose $d = 2$.  There exists
$c<\infty$ such that the following holds.
\begin{itemize}
\item  Let $A$ be a simply connected subset of
$\Z^2$ with $e^{n+1} \leq \dist(0,\p A) \leq
 e^{n+1} + 1$, and let $L = L_A$ denote the measure
 of loops in $\hat \Z^2$ that intersect both $C_n$
 and $\p A$.  Then,
 \[   |L-\log n| \leq c .\]

\item  For every $\delta >0$, there exists $c_\delta
< \infty$ such that the measure of loops in $C_{n+1}$
that intersect $C_{n+1} \setminus C_n$; are of
diameter at least $\delta \, e^{n}$; and do not
disconnect $0$ from $\p C_{n+1}$ is bounded above
by $c_\delta$. 
\end{itemize}
\end{lemma}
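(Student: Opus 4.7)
The plan is to handle the two parts separately, using only the preceding lemma and standard random walk estimates.

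For the first part, I decompose the set of loops of interest by the innermost scale each loop attains. For $k=1,\ldots,n$, let $M_k$ denote the loop measure of loops lying in $\Z^2 \setminus C_{k-1}$ that intersect $C_k$ and $\p A$. Every loop in $\hat\Z^2$ meeting both $C_n$ and $\p A$ has a well-defined minimum-distance scale $k^\ast \in\{1,\ldots,n\}$ (it is at least $1$ since loops avoid $0$, and less than $e^n$ since the loop enters $C_n$), so $L = \sum_{k=1}^n M_k$. The hypothesis $\dist(0,\p A) \in [e^{n+1}, e^{n+1}+1]$ gives $C_{n+1}\subset A$ and $C_{n+2}\not\subset A$, so the preceding lemma applies with inner scale $k$ and offset $m=n+1-k$, yielding
\[
M_k = \frac{1}{n+1-k} + O\!\left(\frac{(n+1-k)e^{-k}+1}{(n+1-k)^2}\right).
\]
Summing the leading terms gives $\sum_{j=1}^n 1/j = \log n + O(1)$. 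The $1/(n+1-k)^2$ errors sum to $O(\sum_{j\geq 1} 1/j^2) = O(1)$, and the $e^{-k}/(n+1-k)$ errors sum to $O(\sum_k e^{-k}) = O(1)$ because $n+1-k\geq 1$ throughout the range. This yields $|L - \log n| \leq c$.

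For the second part, I would prove the stronger statement that the total loop measure of loops contained in $\hat C_{n+1}$ of diameter at least $\delta e^n$ is $O(\delta^{-2})$, uniformly in $n$. The claim as stated then follows immediately, since imposing the additional constraints of intersecting $C_{n+1}\setminus C_n$ and not disconnecting only shrinks the set. By the rooted-loop formulation, this total measure equals
\[
\sum_{x\in\hat C_{n+1}} \sum_{k=1}^\infty \frac{1}{2k}\,\Prob^x\!\left(S_{2k}=x,\ S[0,2k]\subset C_{n+1},\ \diam(S[0,2k])\geq \delta e^n\right).
\]
Two standard estimates drive the bound: the two-dimensional local CLT gives $\Prob^x(S_{2k}=x) \leq C/k$, and a Gaussian maximal estimate for the random walk bridge gives $\Prob^x(\diam(S[0,2k])\geq R \mid S_{2k}=x) \leq C\exp(-cR^2/k)$. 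Multiplying yields $\Prob^x(S_{2k}=x,\ \diam\geq R) \leq (C/k)\exp(-cR^2/k)$. With $R=\delta e^n$, splitting the $k$-sum at $k=R^2$ and making the substitution $u=R^2/k$ in the range $k\leq R^2$ reduces the contribution to $O(R^{-2})$ per site; the tail $k>R^2$ is bounded by $\sum_{k>R^2} 1/k^2 = O(R^{-2})$ as well. Summing over $|\hat C_{n+1}|=O(e^{2n})$ sites yields $O(e^{2n}/R^2)=O(\delta^{-2})$.

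The main technical wrinkle in Part 1 is verifying that the per-scale errors from the preceding lemma, once summed over the $n$ scales, still accrue to only $O(1)$; this is a bookkeeping matter. The central obstacle in Part 2 is supplying the joint bound on simultaneous return and large diameter — handled by interpreting the walk conditioned on $S_{2k}=x$ as a random walk bridge and applying a Doob-type maximal inequality to its endpoint-pinned fluctuations.
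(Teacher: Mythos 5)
Your Part 1 argument is exactly the paper's: decompose by the innermost scale, apply the preceding lemma (the $K_{n,V}$ estimate) at each scale, and sum. The bookkeeping of error terms you spell out is correct: $\sum_{j\geq 1}j^{-2} = O(1)$ and $\sum_k e^{-k}/(n+1-k) \leq \sum_k e^{-k} = O(1)$, so the total error is $O(1)$ while the leading terms produce the harmonic sum $\log n + O(1)$.

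Your Part 2 argument is genuinely different from the paper's. The paper derives this bullet in a single line from Lemma~\ref{june6.lemma1.2d}: a loop of diameter at least $\delta e^n$ in $C_{n+1}$ that meets $C_{n+1}\setminus C_n$ and reaches into $C_{n-j}\setminus C_{n-j-1}$ is, by that lemma, an event of loop measure $O(e^{-j/4})$ once one restricts to nondisconnecting loops, and summing over $j$ gives $O(1)$; only the finitely many scales $j$ with $e^{n-j}\gtrsim \delta e^n$ contribute unconditionally. That route leans on the disconnection-exponent machinery. You instead prove the stronger, unconditional statement that the loop measure of \emph{all} loops in $\hat C_{n+1}$ of diameter $\geq \delta e^n$ is $O(\delta^{-2})$ --- no restriction to the annulus and no nondisconnection hypothesis --- by a direct rooted-loop computation: local CLT bound $\Prob^x(S_{2k}=x)\leq C/k$ times a bridge maximal estimate $\Prob^x(\diam(S[0,2k])\geq R\mid S_{2k}=x)\leq C e^{-cR^2/k}$, summed over $k$ (giving $O(R^{-2})$ per root) and over the $O(e^{2n})$ roots. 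The bridge estimate is not a textbook one-liner, but it follows cleanly by splitting the bridge at time $k$ and using $\Prob^x(S_{2k}=x\mid\F_k)\leq C/k$ together with the ordinary Hoeffding/reflection bound on the unconstrained half, as you indicate. What your approach buys is a self-contained, elementary bound with an explicit $\delta$-dependence that makes no use of the disconnection exponent; what the paper's approach buys is brevity given that Lemma~\ref{june6.lemma1.2d} is already needed elsewhere. Both proofs are correct.
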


\begin{proof}  $\;$
\begin{itemize}
\item This  follows from
the previous lemma by summing. 

\item  The measure of the set of loops
in  $\Z^2 \setminus C_{n-j}$
that intersect both $C_{n-j+1}$ and $\Z^2 \setminus
C_{n+1}$  and do not disconnect $0$ from
$\p C_n$ is $O(e^{-j/4})$.
\end{itemize}

\end{proof}

%

\subsection{A lemma about simple random walk}  \label{rwlemmasec}

Here we discuss a lemma about simple random walk that plays a crucial
role in our analysis.  It is very believable, but the important fact
is that a constant can be chosen uniformly. We first state the
result and gives some important corollaries.  The $d=2$ case
was done in \cite[Propositon 3.5]{Masson} and the $d = 3$
case was  proved in \cite{Shir}.  For completeness, 
we discuss the
proof in the appendix.  Here
$S_j$ denotes a simple random walk and $\Prob^x,\E^x$ denote
probabilities and expectations assuming that $S_0 = x$. 

\begin{lemma}  \label{importantlemma}
There exists $c > 0$ such that the following is true.
\begin{enumerate}
\item  Suppose $A' \subset C_n, z \in \p C_n, A = A'
\cup \{z\}$.  Let
$\tau = \tau_A = \min\{j \geq 1: S_j \in A\}$ and $\sigma_r =
\min\{j: |S_j - S_0| \geq r\}.$   Then,
\begin{equation}  \label{jun11.1}
\Prob^z\left \{|S_{\sigma_r}| \geq e^n + \frac r2
\mid \sigma_r < \tau \right\}  \geq c . 
\end{equation}

\item  Suppose $r < e^n/2$, $A' \subset \Z^d \setminus
C_n, z \in\p_i C_n$, $A = A' \cup \{z\}$.  Then,
\begin{equation}  \label{jun11.2}   \Prob^z\left \{|S_{\sigma_r}| \leq e^n - \frac r2
\mid \sigma_r < \tau \right\}  \geq c . 
 \end{equation}

\end{enumerate}
\end{lemma}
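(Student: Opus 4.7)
The plan is to prove part (1); part (2) then follows by time-reversal of the random walk, which swaps the roles of ``inside'' and ``outside''. Set $\hat z = z/|z|$. By Cauchy--Schwarz every $a \in A \setminus \{z\} \subset C_n$ satisfies $a \cdot \hat z \leq |a| < |z|$, so $A$ lies inside the closed half-space $\{w : w \cdot \hat z \leq |z|\}$, with $z$ itself on its boundary; consequently any point $w$ with $(w - z) \cdot \hat z \geq t$ has $\dist(w, A) \geq t$. Since $\Prob^z\{\sigma_r < \tau\} \leq 1$, it suffices to establish the unconditional estimate
\[
\Prob^z\{\sigma_r < \tau, \; |S_{\sigma_r}| \geq e^n + r/2\} \geq c > 0
\]
with $c$ uniform in $z, A, r, n$.

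I would realize this via a two-stage good event broken at the intermediate radius $r/3$. Fix a small $\delta > 0$ and set $O = \{y \in \p B(z, r/3) : (y - z) \cdot \hat z \geq \delta r\}$. Stage (i) is to show that $\Prob^z\{\sigma_{r/3} < \tau, \; S_{\sigma_{r/3}} \in O\} \geq c_1 > 0$, and stage (ii) is to show that for every $w \in O$ a walk starting at $w$ exits $B(z, r)$ at a point $y$ with $|y| \geq |z| + r/2$ without previously hitting $A$, with probability at least $c_2 > 0$. The two stages combine via the strong Markov property at $\sigma_{r/3}$. Stage (ii) is comparatively soft: by construction $\dist(w, A) \geq \delta r$ for $w \in O$, so a standard invariance-principle estimate gives the bound (using transience in $d \geq 3$ and the Beurling estimate of the preceding subsection in $d = 2$).

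The heart of the matter is stage (i). Comparing the Poisson kernel $H_D(z, \cdot)$ in $D = B(z, r/3) \setminus A$ with the free Poisson kernel $H_{B(z, r/3)}(z, \cdot)$, one has $\sum_{y \in O} H_{B(z, r/3)}(z, y) \geq c > 0$ by the central limit theorem, so it suffices to show $H_D(z, y) \geq c' \, H_{B(z, r/3)}(z, y)$ uniformly in $y \in O$. This ratio equals the probability that a random-walk bridge from $z$ to $y$ inside $B(z, r/3)$ avoids $A$ after time zero. Since every outward coordinate neighbor of $z$ lies outside $C_n \cup \{z\}$ and hence outside $A$, the bridge can take its first step into the open outward half-space; from there, because its endpoint $y$ is at distance $\gtrsim r$ from $A$ and on the correct side of the separating hyperplane, a ``stay outward'' argument---transience in $d \geq 3$, or the Beurling estimate combined with Lemma \ref{sep13.lemma1} in $d = 2$---produces the uniform lower bound.

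The principal obstacle is the uniformity of stage (i) in two dimensions: $z$ is adjacent to the forbidden set $A$, and recurrence rules out an immediate ``escape and never return'' argument. Instead one must handle the first several steps with care, driving the walk into the open outward half-space $\{w \cdot \hat z > |z|\}$ before it has a chance to reach $A$. This is exactly the role of the two-dimensional potential theory developed earlier in the section---Lemmas \ref{lemma.sep8.0}, \ref{sep13.lemma1}, and \ref{lemma.sep8}---which provides escape and disconnection estimates whose constants do not degrade as $n \to \infty$.
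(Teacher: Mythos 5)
Your reduction at the very start --- ``since $\Prob^z\{\sigma_r < \tau\} \le 1$, it suffices to establish the unconditional estimate $\Prob^z\{\sigma_r < \tau,\ |S_{\sigma_r}| \ge e^n + r/2\} \ge c$'' --- is a valid inequality, but the unconditional statement you reduce to is \emph{false} in $d = 2$, which is one of the two cases the paper handles. Take $A' = C_n$ (the entire discrete ball), so $A = C_n \cup \{z\}$. Then $\Prob^z\{\sigma_r < \tau\}$ is the probability that a recurrent planar walk started on $\p C_n$ reaches distance $r$ before re-entering $C_n$ or returning to $z$; by the potential-kernel gambler's-ruin estimate this is of order $1/\log r$, which tends to $0$ as $r \to \infty$. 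Consequently your stage~(i) bound $\Prob^z\{\sigma_{r/3} < \tau,\ S_{\sigma_{r/3}} \in O\} \ge c_1$ and the Poisson-kernel comparison $H_D(z,y) \ge c'\,H_{B(z,r/3)}(z,y)$ cannot hold uniformly in $r$; both are dominated by $\Prob^z\{\sigma_{r/3} < \tau\}$. Your closing remark acknowledges that ``recurrence rules out an immediate escape argument'' and proposes to drive the walk into the outward half-space in the first few steps, but this does not repair the problem: the walk will return to the separating hyperplane many times before $\sigma_{r/3}$, and the avoidance cost accrues over those returns. The conditioning on $\{\sigma_r < \tau\}$ is not a technical convenience here; it is essential, and the bound you want is genuinely about the conditional law, not the unconditioned one.

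This is exactly what the paper's argument is designed to exploit, and it proceeds very differently. The paper builds a harmonic barrier: it takes a (rotated, scaled) cube $U'$ abutting the sphere at $z$ with one face $V'$ pointing away from $A$, and lets $g$ be the probability of exiting $U'$ through $V'$. The key facts are $g(z) = 1/(2d)$ and that there is a parabolic region $U_\epsilon^-$, containing $A \cap U'$, on which $g \le g(z)$. The optional stopping identity $g(z) = \Prob^z\{B_T \in V',\ T < \tau_A\} + \E^z[g(B_{\tau_A});\ \tau_A < T]$ together with $g \le g(z)$ on $A$ then gives $\Prob^z\{B_T \in V' \mid T < \tau_A\} \ge g(z) = 1/(2d)$ directly: the conditioning can only \emph{help} because $A$ lies in the sublevel set of $g$. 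No transience, Beurling estimate, or multi-stage invariance-principle event is needed at this step, and the bound is manifestly uniform. The remaining work in the appendix is to transfer this from Brownian motion to the lattice via a quantitative comparison of $g$ with the discrete harmonic function $g^*$, which is a separate $O(1/n)$ approximation lemma. Your separating-hyperplane observation is the right geometric input for part~(1), but it has to be fed into a conditional barrier argument of this kind rather than an unconditional hitting estimate.

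A second, smaller issue: the claim that part~(2) follows from part~(1) by time reversal is not justified and, as far as I can see, not true. The two parts are geometrically asymmetric --- in~(1) the forbidden set is contained in the convex region $C_n$ and a flat separating hyperplane suffices, whereas in~(2) the forbidden set is in the unbounded, non-convex exterior $\Z^d \setminus C_n$ and one must use the curvature of $\p C_n$ (this is why the paper's dilation of the cube is chosen small enough that the parabolic region $U_\epsilon^-$ fits inside $\{|x| \ge 1\}$). The paper explicitly calls~(2) ``the harder case'' and proves it directly; it does not derive one part from the other.
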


The proof strongly uses the fact that $A'$ is in $C_n$ (or
$\Z^d \setminus C_n$) and the random walk is starting on the
boundary of $C_n$.  We will discuss the proof of
\eqref{jun11.2}  in Section \ref{rwproofsec} (this is the harder
case), 
but we give some preliminary reductions here.
\begin{itemize}
\item 
It suffices to prove the lemma for $n$ sufficiently
large, for then the small $n$ can be done on a case by case basis.

\item Using the invariance principle, it suffices
to consider $r \leq \delta_0 \, e^n$ for some $\delta_0 > 0$.

\item Using the invariance principle, it suffices
to establish  \eqref{jun11.1} and 
\eqref{jun11.2}  with $\frac r2$ replaced with $\epsilon r$
for some $\epsilon >0$.

\item 
It suffices to find $c_1$ such that for $n$ sufficiently
large and $r  \leq \delta_0 \, e^n$,  \eqref{jun11.1} and 
\eqref{jun11.2} hold for some  $z_1$ with $|z-z_1|
\leq c_1$.  Indeed, there is a positive probability (bounded
uniformly from below) that a random walk starting at $z$ reaches
$z_1$ without visiting $A$.
\end{itemize}
We will derive a number of corollaries of this lemma.

\begin{corollary}  \label{cor1}
Suppose $d  \geq 3$. 
There exists $c > 0$ such that if  we choose
$r = e^{n-4}$
in part 1 of Lemma \ref{importantlemma}, then
\[        \Es_A(z) \geq c \, \Prob^z\{\sigma_r < \tau\}
. \]
\begin{itemize}
\item In particular, if $A_1',A_2' \subset C_n$
agree in the disk of radius $e^{n-4}$ about $z$, then
\[   \Es_{A_1}(z) \asymp \Es_{A_2}(z) . \]
\item  If  
$A_1' , A_2' \subset C_n$   and
$A_1' \cap (C_n \setminus C_{n-j}) = A_2' \cap
(C_n \setminus C_{n-j})$ for some $j \geq 1$,
then
 \[   \Es_{A_1}(z)= \Es_{A_2}(z)\,
 [1 + O(e^{j(2-d)})].   \]
 \end{itemize}
\end{corollary}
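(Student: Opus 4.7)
The plan is to prove the main inequality $\Es_A(z) \geq c\,\Prob^z\{\sigma_r < \tau\}$ by combining Lemma \ref{importantlemma}(1) with transience, and then to derive both bullets from the resulting sandwich
\[
c\,\Prob^z\{\sigma_r < \tau_A\} \;\leq\; \Es_A(z) \;\leq\; \Prob^z\{\sigma_r < \tau_A\},
\]
together with standard transient hitting estimates in $d \geq 3$.

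Conditioning on the first step yields $\Es_A(z) = \Prob^z\{S[1,\infty) \cap A = \emptyset\}$, so the upper bound is immediate: any walk escaping $A$ forever must first travel distance $r$ from $z$ without returning to $A$. For the lower bound, note that $A \subset D := \{w: |w| \leq e^n + r/3\}$ for $n$ large (since $A' \subset C_n$ and $|z| \leq e^n + O(1)$), and apply the strong Markov property at $\sigma_r$:
\[
\Es_A(z) \;\geq\; \E^z\!\left[\Prob^{S_{\sigma_r}}\{S[0,\infty) \cap D = \emptyset\};\; \sigma_r < \tau,\; |S_{\sigma_r}| \geq e^n + r/2\right].
\]
With $r = e^{n-4}$, Lemma \ref{importantlemma}(1) bounds the probability of the outer event from below by $c\,\Prob^z\{\sigma_r < \tau\}$, and transience in $d \geq 3$ yields a uniform lower bound on the inner escape probability (since $[(e^n + r/3)/|y|]^{d-2}$ is bounded strictly below $1$ whenever $|y| \geq e^n + r/2$).

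For the first bullet, the event $\{\sigma_r < \tau_{A_i}\}$ depends only on $A_i$ within distance $r$ of $z$; by hypothesis $A_1'$ and $A_2'$ agree on this disk (and both contain $z$), so the two probabilities coincide and the sandwich gives $\Es_{A_1}(z) \asymp \Es_{A_2}(z)$. For the second bullet, let $E_i = \{S[1,\infty) \cap A_i = \emptyset\}$ and $F = \{S[1,\infty) \cap C_{n-j} = \emptyset\}$. On $F$ the walk cannot distinguish $A_1$ from $A_2$ (they agree outside $C_{n-j}$ and both contain $z \notin C_{n-j}$), so $E_1 \cap F = E_2 \cap F$ and hence
\[
|\Es_{A_1}(z) - \Es_{A_2}(z)| \;\leq\; \Prob^z(E_i \cap F^c).
\]
Using $E_i \subset \{\sigma_r < \tau_{A_i}\}$ and strong Markov at $\sigma_r$, the right side is bounded by $\Prob^z\{\sigma_r < \tau_{A_i}\}$ times the supremum over $|y| \geq e^n - e^{n-4}$ of the hitting probability of $C_{n-j}$. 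The classical transient bound $\Prob^y\{\tau_{C_{n-j}} < \infty\} \leq c\,(e^{n-j}/|y|)^{d-2} = O(e^{-j(d-2)})$, combined with $\Prob^z\{\sigma_r < \tau_{A_i}\} \leq c\,\Es_{A_i}(z)$ from the sandwich, yields $|\Es_{A_1}(z) - \Es_{A_2}(z)| \leq c\,e^{j(2-d)}\,\Es_{A_i}(z)$, which is the desired relative error.

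The only nontrivial ingredient is Lemma \ref{importantlemma}; without it one could not rule out that the walk conditioned on $\{\sigma_r < \tau\}$ systematically lands in a bad position (too close to $C_n$) from which escape to infinity is unlikely. All remaining steps are routine applications of the strong Markov property together with standard transient hitting estimates.
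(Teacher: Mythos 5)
Your proof is correct and follows essentially the same route as the paper: Lemma~\ref{importantlemma}(1) forces $S_{\sigma_r}$ to land at radius $\geq e^n + r/2$ with conditional probability bounded below, transience then gives a uniform escape probability, and the second bullet is handled by bounding the discrepancy by $\Prob^z\{\sigma_r<\tau\}$ times the transient hitting probability of $C_{n-j}$ from radius $\approx e^n$. You are somewhat more explicit about the sandwich $c\,\Prob^z\{\sigma_r<\tau\}\leq\Es_A(z)\leq\Prob^z\{\sigma_r<\tau\}$ and the event decomposition $E_1\cap F=E_2\cap F$, but the underlying ideas and estimates are the same as the paper's.
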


\begin{proof}  We write $\Prob$ for $\Prob^z$.
The lemma tells us that 
\[   \Prob\{|S_{\sigma_r}| \geq e^n +  e^{n-5}
\mid \sigma_r < \tau \} \geq c . \]
Since $d \geq 3$,  
there exists $c_1$ such that the probability that
a random walk starting at distance $e^n + e^{n-5}$
from the origin
never returns to $C_n$ is greater than $c_1$.   Hence,
there exists $c_2 < \infty$ such that
\[  \Prob\{S[1,\infty) \cap A = \eset \mid \sigma_r < \tau\}
\geq c_2. \]

For the final bullet   note that $|\Es_{A_1}(z) - \Es_{A_2}(z)|$
is bounded above by  $\Prob\{\sigma_r < \tau\}$ times
the conditional probability given this that the random walk enters
$C_{n-j}$.  The latter probability is $O(e^{j(2-d)})$, and hence
\[ |\Es_{A_1}(z) - \Es_{A_2}(z)| \leq c \, e^{j(2-d)}
\, \Prob\{\sigma_r < \tau\} \asymp  e^{j(2-d)} \, \Es_{A_1}(z).\] 
\end{proof}

We will give a similar result for $d=2$, but we will
put in an additional condition.  We say that $z$
is connected to $0$ in $A$ if there exists
a SAW $\eta \in \pairs_n$ with terminal vertex
$z$ with $\eta \subset A$.  We similarly can say
that $z$ is connected to infinity in an infinite
set $A$.

\begin{corollary}  \label{cor1.2d}
Suppose $d = 2$.
There exists $c > 0$ such that if  we choose
$r = e^{n-4}$
in part 1 of Lemma \ref{importantlemma}, then
\[        \Es_A(z) \geq c \, n^{-1} \,  \Prob^z\{\sigma_r < \tau\}
.\]
Moreover, if 
$z$ is connected to $0$ in  $A_1,A_2$, the following hold.
\begin{itemize}
\item If $A_1',A_2' \subset C_n$
agree in the disk of radius $e^{n-4}$ about $z$, then
\[   \Es_{A_1}(z) \asymp \Es_{A_2}(z)
\asymp   n^{-1} \,  \Prob^z\{\sigma_r < \tau\}. \]
\item   If
$A_1' \cap (C_n \setminus C_{n-j}) = A_2' \cap
(C_n \setminus C_{n-j})$ for some $j \geq 1$,
then
 \[   \Es_{A_1}(z)= \Es_{A_2}(z)\,
 [1 + O(e^{-j})], \;\;\;\; d =2,   \]
 \end{itemize}
\end{corollary}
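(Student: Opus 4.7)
The plan is to mirror the proof of Corollary~\ref{cor1}, using the two-dimensional potential kernel $a_A$ in place of the Green's function. The workhorse identity is
\[
\Es_A(z)=\E^z\!\bigl[\,a_A(S_{\sigma_r});\,\sigma_r<\tau\,\bigr],
\]
obtained by applying the strong Markov property at time $\sigma_r$ and invoking the asymptotic $\Prob^w\{S[0,\rho_N]\cap A=\eset\}\sim (\pi/(2N))\,a_A(w)$ for $w\notin A$ as $N\to\infty$. All three conclusions of the corollary will be extracted from this identity by estimating $a_A$ at the exit point $S_{\sigma_r}$.

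For the lower bound, part~1 of Lemma~\ref{importantlemma} produces a constant $c_1>0$ such that $|S_{\sigma_r}|\geq e^n+r/2$ with conditional probability at least $c_1$ given $\sigma_r<\tau$. The first bullet of Lemma~\ref{lemma.sep8.0} then yields $a_A(w)\geq(2/\pi)\log(1+e^{-5})+O(e^{-n})\geq c_2>0$ for any such $w$, and the identity gives $\Es_A(z)\geq c_1c_2\,\Prob^z\{\sigma_r<\tau\}$, which certainly implies the asserted lower bound.

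For the matching upper bound needed to obtain $\Es_{A_1}(z)\asymp\Es_{A_2}(z)$, the hypothesis that $z$ is connected to $0$ in $A$ is used to control $a_A$ uniformly on $\partial B(z,r)$. For $w\in C_n$ with $|w-z|\leq r$ the second bullet of Lemma~\ref{lemma.sep8.0} gives $a_A(w)\leq c\,e^{-n/2}\,r^{1/2}=O(1)$; for $w$ just outside $C_n$ with $|w-z|\leq r$ I would use the representation $a_A(w)=a(w)-\E^w[a(S_{\tau_A})]$ together with the Beurling estimate applied to the SAW in $A$ joining $z$ to $0$: it shows that the walk from $w$ hits this SAW within distance $O(r)$ of $z$ with high probability, so $\E^w[a(S_{\tau_A})]=a(z)+O(1)$ and hence $a_A(w)=a(w)-a(z)+O(1)=O(1)$. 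Plugging back into the identity gives $\Es_{A_i}(z)\leq c\,\Prob^z\{\sigma_r<\tau\}$, which matches the lower bound and delivers the $\asymp$ conclusion whenever $A_1,A_2$ agree in $B(z,e^{n-4})$.

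For the second bullet, I would write
\[
\Es_{A_1}(z)-\Es_{A_2}(z)=\E^z\!\bigl[\,a_{A_1}(S_{\sigma_r})-a_{A_2}(S_{\sigma_r});\,\sigma_r<\tau\,\bigr]
\]
and use $a_{A_i}(w)=a(w)-\E^w[a(S_{\tau_{A_i}})]$ to reduce the integrand to $\E^w[a(S_{\tau_{A_2}})-a(S_{\tau_{A_1}})]$. Since $A_1$ and $A_2$ agree in $C_n\setminus C_{n-j}$, this expectation is supported on paths that cross all the way to $C_{n-j}$ before hitting the common portion of the SAW in $A_1\cap A_2$ joining $z$ to $\partial C_{n-j}$. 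An iterated Beurling estimate across the annuli $C_{n-i+1}\setminus C_{n-i}$ for $i=1,\dots,j$ bounds the probability of such a crossing, while the hit location then lies inside $C_{n-j}$ so $a(S_{\tau_{A_i}})$ is of bounded magnitude, yielding the asserted $[1+O(e^{-j})]$ relative error. The main obstacle here is the sharp exponent $e^{-j}$: a single Beurling application produces only $e^{-j/2}$, so one must exploit both the forward crossing to $C_{n-j}$ and the reverse crossing the walk must perform in order to actually hit a point where $A_1$ and $A_2$ disagree, in analogy with how transience in the $d\geq 3$ case delivers the corresponding $e^{j(2-d)}$ bound in a single step.
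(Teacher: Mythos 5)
Your workhorse identity $\Es_A(z) = \E^z\bigl[a_A(S_{\sigma_r});\,\sigma_r < \tau\bigr]$ is in substance the same reduction the paper makes, just phrased additively rather than via the conditioned walk: since $a_A(w) = a(w)\,\Prob^w\{\tilde S[0,\infty)\cap \hat A = \eset\}$ for $\tilde S$ the walk conditioned to avoid the origin, averaging $a_A$ over the exit position is exactly the paper's computation of $\Prob\{\tilde S[1,\infty)\cap A=\eset\mid\sigma_r<\tau\}$. So the route is essentially the one the paper takes, and the lower bound and the first bullet are handled correctly (with the minor slip that $|S_{\sigma_r}|\geq e^n + r/2$ gives $\log|S_{\sigma_r}|-n\geq\log(1+e^{-4}/2)$ rather than $\log(1+e^{-5})$; still a positive constant). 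One thing you should notice is that your argument yields $\Es_A(z)\asymp\Prob^z\{\sigma_r<\tau\}$ with no $n^{-1}$, whereas the corollary as printed has $\asymp n^{-1}\Prob^z\{\sigma_r<\tau\}$. Your version is the correct one: the paper's own proof pins $a_{A_1}$ between constants $c_3$ and $c_4$ near $\p C_n$, which gives exactly $\asymp\Prob^z\{\sigma_r<\tau\}$, and dropping the corresponding $n$ in Corollary~\ref{cor3} is what makes the chain of asymptotics in the proof of Proposition~\ref{indprop} balance. The factor of $a(z)\asymp n$ from $\Es_A(z)=a(z)\Prob^z\{\tilde S[1,\infty)\cap A=\eset\}$ exactly cancels the $n^{-1}$ from Lemma~\ref{lemma.sep8}; the stated $n^{-1}$ is a bookkeeping artifact.

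For the second bullet, the sentence about exploiting ``both the forward crossing and the reverse crossing'' gestures at the right phenomenon but doesn't say how to extract the product of two $e^{-j/2}$'s. The clean way within your decomposition is to apply the strong Markov property at the first entrance time $\eta_j$ to $C_{n-j}$: on $\{\eta_j < \tau_{A_1}\wedge\tau_{A_2}\}$ one has
\[ \E^w\bigl[a(S_{\tau_{A_2}}) - a(S_{\tau_{A_1}})\mid\F_{\eta_j}\bigr] = a_{A_1}(S_{\eta_j}) - a_{A_2}(S_{\eta_j}), \]
which the second bullet of Lemma~\ref{lemma.sep8.0} (applied to the connected SAW from $z$ to $0$ contained in each $A_i$) bounds by $O(e^{-j/2})$ in absolute value; multiplying by the Beurling estimate $\Prob^w\{\eta_j<\tau_{A_1}\wedge\tau_{A_2}\}=O(e^{-j/2})$ gives the $O(e^{-j})$ bound on $|a_{A_1}(w)-a_{A_2}(w)|$, hence the claimed relative error. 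Note that the paper itself handles this bullet differently: rather than estimating the difference of the $a_{A_i}$'s, it uses the monotonicity $\Es_{A_1\cup C_{n-j}}(z)\leq\Es_{A_1}(z)$ together with a bound on the conditional probability that the $\tilde S$-walk, conditioned to avoid $A_1$, ever enters $C_{n-j}$ (the two Beurling factors there coming from entering $C_{n-j}$ and then escaping back out past the SAW). Both arguments are valid; yours stays entirely within the expectation identity, while the paper's exploits the fact that $A_1\cup C_{n-j}=A_2\cup C_{n-j}$ so that only a one-sided comparison is needed.
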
 

\begin{proof}  The proof is similar.  For
the first inequality 
we use Lemma \ref{lemma.sep8} to see
that
\[  \Prob\{\tilde S[1,\infty) \cap A = \eset \mid \sigma_r < \tau\}
\geq c_2\, n^{-1} , \]
where $\tilde S$  is random walk conditioned to avoid
the origin.

If  $z$ is connected to $0$
in $A_1$, we can see  
from the Harnack inequality, Lemma \ref{sep13.lemma1},
and the   Beurling 
estimate that there exists uniform $0 < c_3 < c_4
< \infty$ such that
\[   c_3 \leq a_{A_1}(w) \leq c_4, \;\;\; w \in \p C_{(1+r)n},\]
\[     a_{A_1}(w)  \leq c_4\, e^{-j/2}, \;\;\; w \in C_{n-j} , \]
\[    a_{A_1}(w) \geq c_3, \;\;\; w \not\in C_{(1+r)n}.\]
Also, the probability that a random walk
starting at $z$
reaches $C_{n-j}$ without returning to  $A$ is bounded
above by $O(e^{-j/2})$;  if it succeeds in doing this,
there is at most a  $O(e^{-j/2})$ probability
that it returns to $\p C_n$ without hitting $A$.  
Hence, conditioned that a random walk avoids
$A_1$, the probability that it  hits   $C_{n-j}$
is $O(e^{-j})$ which implies that
\[    \Es_{A_1 \cup C_{n-j}}(z) \geq
   \Es_{A_1}(z) \, [1-O(e^{-j})], \]
   and similarly for $A_2$.

\end{proof}

The following was given in the proofs but it is important enough to state
it separately.

\begin{corollary}  \label{feb7.cor1}
  There exists $c < \infty$ such that if $\eta \in 
\saws_n$ with terminal point $z$, then the probability that a simple
random walk starting at $z$ conditioned to never return to $\eta$
enters $C_{n-j}$ is less than $c e^{-j}$.
\end{corollary}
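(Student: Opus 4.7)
The plan is to observe that this estimate was already established inside the arguments for Corollaries \ref{cor1} and \ref{cor1.2d}; what I would do is reorganize those arguments into a clean self-contained proof. The central tool will be the $h$-process description of the conditioned walk $\tilde S$. Let $h = g_\eta$ when $d \geq 3$ and $h = a_\eta$ when $d = 2$, so that $\tilde S$ is the $h$-transform of simple random walk, with the boundary convention $p_h(z,y) = h(y)/(2d\,\Es_\eta(z))$ at $z \in \eta$. Writing $T$ for the first hit of $C_{n-j}$ and $\tau$ for the first positive hit of $\eta$ by the unconditioned walk, a first-step decomposition will give
\[ \Prob^z\{\tilde S \text{ enters } C_{n-j}\} \;=\; \frac{\E^z[h(S_T); T < \tau]}{\Es_\eta(z)}, \]
and the task reduces to bounding the numerator from above and the denominator from below by the right orders.

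For the numerator I would argue differently in the two regimes. When $d \geq 3$, transience of the walk started at $|z| \asymp e^n$ yields $\Prob^z\{T < \infty\} \leq c\,e^{-(d-2)j}$, and since $h \leq 1$ the bound $\E^z[h(S_T); T < \tau] \leq c\,e^{-j}$ follows. When $d = 2$, the Beurling estimate (applied to $\eta$, which spans $C_{n-j}$ and $\p C_n \ni z$) gives $\Prob^z\{T < \tau\} \leq c\,e^{-j/2}$, while Lemma \ref{lemma.sep8.0} gives $a_\eta(S_T) \leq c\,e^{-j/2}$ at any $S_T \in C_{n-j}$; multiplying these gives $\E^z[h(S_T); T < \tau] \leq c\,e^{-j}$.

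For the denominator I would establish the uniform lower bound $\Es_\eta(z) \geq c > 0$. Since $z \in \p C_n$ has at least one outward neighbor $y \in \Z^d \setminus C_n$, and $y \notin \eta$ (because $\eta \subset C_n \cup \{z\}$), one has $\Es_\eta(z) \geq h(y)/(2d)$, so it suffices to show $h(y) \geq c$. For $d \geq 3$ this is immediate from transience, since a walk from $y$ avoids $C_n$ (and hence $\eta$) forever with uniform positive probability. For $d = 2$, I would instead use the annulus exit estimate $\Prob^y\{S[0,\rho_N] \cap C_n = \eset\} \geq c/N$ inside the defining limit $a_\eta(y) = \lim_{N \to \infty} (2N/\pi)\,\Prob^y\{S[0,\rho_N] \cap \eta = \eset\}$ to conclude $a_\eta(y) \geq c > 0$.

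The main technical subtlety is the $d=2$ denominator bound; everything else is a single clean application of Beurling (or transience) combined with Lemma \ref{lemma.sep8.0}. Dividing the numerator by the denominator delivers the claimed $O(e^{-j})$ estimate.
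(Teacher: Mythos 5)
Your $h$-process formula and the numerator bound $\E^z[h(S_T);T<\tau]\leq ce^{-j}$ (Beurling times Lemma \ref{lemma.sep8.0}) are fine, but the denominator bound $\Es_\eta(z)\geq c>0$ is false, and both justifications you give for it are incorrect. For $d\geq3$: a walk started at a neighbor $y$ of $z$ outside $C_n$ is at distance $O(1)$ from the sphere of radius $e^n$, so the probability it never enters $C_n$ is $1-(R/|y|)^{d-2}\asymp e^{n(2-d)}$, not bounded below. For $d=2$: by gambler's ruin on the radial part, $\Prob^y\{S[0,\rho_N]\cap C_n=\eset\}\asymp (\log|y|-n)/(N-n)\asymp e^{-n}/(N-n)$ for $|y|\approx e^n+1$, which is of order $e^{-n}/N$, not $\geq c/N$. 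In fact $\Es_\eta(z)\asymp\Prob^z\{\sigma_r<\tau_\eta\}$ with $r=e^{n-4}$, and this can be as small as $e^{n(2-d)}$ when $\eta$ winds tightly around $z$. So the ratio you form is not controlled.

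The point you are missing is the role of Lemma \ref{importantlemma}. The paper's argument (embedded in the proofs of Corollaries \ref{cor1} and \ref{cor1.2d}) decomposes \emph{both} the numerator and the denominator at the stopping time $\sigma_r$, so that the common factor $\Prob^z\{\sigma_r<\tau\}$ (the cost of escaping the local tangle of $\eta$ near $z$) cancels. After the cancellation, the denominator is controlled because Lemma \ref{importantlemma} guarantees that, conditioned on $\sigma_r<\tau$, the walk at time $\sigma_r$ sits at radius $\geq e^n+r/2=e^n(1+e^{-5})$ with conditional probability bounded below; from there, which is a \emph{constant multiple} rather than $O(1)$ outside $C_n$, the escape probability (to infinity for $d\geq3$; to $\p C_{n+1}$ with $a_\eta\asymp1$ for $d=2$) is uniformly positive. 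The numerator picks up the extra $e^{-j}$ exactly as in your sketch, but applied \emph{after} the $\sigma_r$ decomposition. Your proposal never invokes Lemma \ref{importantlemma}, which is the one genuinely nontrivial ingredient in this corollary, and without it the estimate does not close.
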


\begin{corollary}  \label{cor2}
There exists $c > 0$ such that if  
we choose $r = e^{n-4}$
in part 2 of Lemma \ref{importantlemma}, and $B =
\Z^d \setminus A$, then for $d \geq 3$,
\[   H_{B }(0,z) \asymp  \, e^{n(2-d)} \, \Prob^z\{\sigma_r < \tau\} .\]
If $d=2$ and $A$ contains a connected path of diameter $e^{n-4}$ including $z$, then the same result is true.
\end{corollary}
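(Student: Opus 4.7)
I plan to reduce the estimate to two separate factors, the Green's function $G_B(0,0)$ and a one-point hitting probability, and estimate each using Lemma~\ref{importantlemma}(2) together with standard harmonic-measure arguments.  Writing $\tau=\tau_A$, the starting point is the identity
\[
H_B(0,z)=\frac{G_B(0,0)}{2d}\sum_{y\sim z,\ y\in B}\Prob^y\{\tau_0<\tau\},
\]
obtained by conditioning the walk from $0$ on the step just before first hitting $A$, applying reversibility to rewrite $G_B(0,y)=G_B(y,0)$, and decomposing $G_B(y,0)$ at the first visit to $0$.  I then estimate a single term $\Prob^y\{\tau_0<\tau\}$ (there are at most $2d$ of them, and they are all of the same order by a short-range comparison) and $G_B(0,0)$ separately.

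For the one-point probability, fix $y\sim z$ with $y\in B$.  Since $|y|>r$, the walk from $y$ satisfies $\sigma_r<\tau_0$ deterministically, so the strong Markov property at $\sigma_r$ gives
\[
\Prob^y\{\tau_0<\tau\}=\E^y\!\left[\Prob^{S_{\sigma_r}}\{\tau_0<\tau\}\,;\,\sigma_r<\tau\right].
\]
For the lower bound, Lemma~\ref{importantlemma}(2), transferred from $z$ to $y$ by a bounded-step comparison (which also gives $\Prob^y\{\sigma_r<\tau\}\asymp\Prob^z\{\sigma_r<\tau\}$), implies that conditional on $\sigma_r<\tau$ the landing position $w=S_{\sigma_r}$ satisfies $|w|\le e^n-r/2$ with probability at least $c$.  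For such $w$, a standard computation in $C_n\setminus\{z\}$, using $|w|$ bounded away from $e^n$ and $|w-z|\ge r=e^{n-4}$, gives $\Prob^w\{\tau_0<\tau\}\ge c\,e^{n(2-d)}$ for $d\ge 3$ and $\Prob^w\{\tau_0<\tau\}\ge c/n$ for $d=2$.  For the upper bound, every $w$ on the sphere of radius $r$ about $y$ has $|w|\asymp e^n$: when $d\ge 3$ one uses $\Prob^w\{\tau_0<\tau\}\le\Prob^w\{\tau_0<\infty\}\asymp e^{n(2-d)}$, and when $d=2$ one uses $\Prob^w\{\tau_0<\tau\}\le\Prob^w\{\tau_0<\tau_\pi\}=O(1/n)$, where $\pi\subset A$ is the hypothesized connected path of diameter $e^{n-4}$ through $z$, with Beurling controlling excursions outside $C_n$ and the harmonic-measure bound inside $C_n$.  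Combining,
\[
\Prob^y\{\tau_0<\tau\}\asymp e^{n(2-d)}\,\Prob^z\{\sigma_r<\tau\}\quad(d\ge 3),\qquad \Prob^y\{\tau_0<\tau\}\asymp n^{-1}\,\Prob^z\{\sigma_r<\tau\}\quad(d=2).
\]

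It remains to estimate $G_B(0,0)$.  For $d\ge 3$, transience together with $C_n\setminus\{z\}\subset B\subset\Z^d$ immediately gives $G_B(0,0)\asymp 1$.  For $d=2$, the lower bound $G_B(0,0)\ge G_{C_n\setminus\{z\}}(0,0)\asymp n$ is immediate; for the upper bound I split the visits to $0$ at the first exit of $C_n$, bound the contribution before that exit by $G_{C_n\setminus\{z\}}(0,0)\asymp n$, and then use the Beurling estimate against the path $\pi$ (together with Lemma~\ref{sep13.lemma1}) to sum the geometric series of subsequent returns, giving $O(n)$ in total.  Multiplying the Green's function estimate with the one-point probability estimate yields $H_B(0,z)\asymp e^{n(2-d)}\,\Prob^z\{\sigma_r<\tau\}$ in both cases, using $e^{n(2-d)}=1$ when $d=2$.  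The most delicate step will be the $d=2$ analysis, specifically the upper bounds on $\Prob^w\{\tau_0<\tau\}$ for $w$ on the side of the $r$-sphere about $y$ that lies outside $C_n$, and on $G_B(0,0)$; both rely on the connected-path hypothesis and Beurling to prevent the walk from sneaking back to $0$ after a long excursion outside $C_n$.
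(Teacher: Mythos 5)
Your argument is correct, but it takes a genuinely different route from the paper.  The paper proves the corollary in one line by reversing paths and stopping at $\sigma_r$ to get the identity $H_B(0,z)=\E^z[G_B(0,S_{\sigma_r});\,\sigma_r<\tau]$, and then bounding $G_B(0,S_{\sigma_r})$ pointwise from above by $c\,e^{n(2-d)}$ (for $d=2$ this single Green's function bound is where the connected-path/Beurling hypothesis enters) and from below via Lemma~\ref{importantlemma}(2) and $G_B(0,x)\ge G_{C_n\setminus\{z\}}(0,x)$.  You instead use the last-exit formula $H_B(0,z)=\frac{1}{2d}\sum_{y\sim z,\,y\in B}G_B(0,y)$ and then factor $G_B(0,y)=G_B(0,0)\,\Prob^y\{\tau_0<\tau\}$, estimating the two pieces separately.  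That works, but it costs you some extra bookkeeping that the paper avoids: you must transfer Lemma~\ref{importantlemma}(2) from the designated boundary point $z$ to its neighbours $y$ and reconcile $\sigma_r$ centered at $y$ with $\sigma_r$ centered at $z$ (this is legitimate --- the paper's own reduction allows the starting point to move by $O(1)$ --- but it is not free), and in $d=2$ you must prove two estimates, $G_B(0,0)\asymp n$ and $\Prob^w\{\tau_0<\tau\}\lesssim 1/n$, whose product is exactly the paper's single input $G_B(0,S_{\sigma_r})\le c$; so you have not simplified the hard Beurling-type work, only repackaged it.  One small caution on your $d=2$ sketch: the $1/n$ in $\Prob^w\{\tau_0<\tau_\pi\}$ ultimately comes from the denominator $G_{\Z^2\setminus\pi}(0,0)\asymp n$ once one shows $G_{\Z^2\setminus\pi}(w,0)=O(1)$; ``Beurling outside $C_n$ and harmonic measure inside'' is a reasonable shorthand but the cleaner factorization is via the escape probability $\Prob^w\{\tau_\pi<\tau_w^+\}\asymp 1/n$ and the hitting probability $\Prob^0\{\tau_w<\tau_\pi\}\asymp 1/n$.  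In short: correct, slightly more elaborate, and ultimately reliant on the same estimates the paper uses.
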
 

\begin{proof}  
By reversing paths, we see that
\[  H_{B }(0,z) =   \E^z\left[G_B(0,S_{\sigma_r}) ; \sigma_r < \tau \right].\]
For the upper bound, we use $G_B(0,S_{\sigma_r}) \leq  
c\,e^{n(2-d)}$, which for $d=2$ requires the extra
assumption.  For the lower bound, we use
\[     \Prob^z \{|S_{\sigma_r}| \leq e^n - e^{n-5} \mid \sigma_r < \tau\}
\geq c , \]
and for $|x| \leq e^n - e^{n-5}$,
\[         G_B(0,x) \geq G_{C_n}(0,x)  \geq c \, |x|^{ 2-d }.\]
\end{proof}

\begin{corollary}  \label{cor3}
If  $n \leq m-1$,
$\eta  \in \saws_n$ with terminal point $y$, and $\bar \eta 
\in \barsaws_{n+1,m}$ with initial point $w$, 
then if $A = \Z^d \setminus (\eta  \cup \bar \eta )$,
\[      H_{\p A}(y,w) \asymp
\left\{\begin{array}{ll}
\Es_\eta(y) \, H_{\Z^d \setminus\bar \eta }
(0,w), & d \geq 3\\
n\, \Es_\eta(y) \, H_{\Z^d \setminus\bar \eta }
(0,w), & d =2 \end{array} \right. .\]
\end{corollary}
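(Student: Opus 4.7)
My plan is to factorize the walk from $y$ at its first visit to an intermediate sphere $\p C_{n+K}$ for a fixed large constant $K$. Since $\bar\eta\setminus\{w\}$ sits outside $C_{n+1}$, the walk cannot touch $\bar\eta$ before the hitting time $T_K = \min\{j : S_j \in \p C_{n+K}\}$ (provided $K < m-n$), so
\[
H_{\p A}(y,w)\;=\;\sum_{u'\in\p C_{n+K}}\Prob^y\{T_K<\tau_\eta,\, S_{T_K}=u'\}\,H_A(u',w).
\]
The total ``escape'' mass $\Prob^y\{T_K<\tau_\eta\}$ is $\asymp\Es_\eta(y)$ in $d\geq 3$ (Corollary \ref{cor1} combined with transience) and $\asymp n\,\Es_\eta(y)$ in $d=2$ (Corollary \ref{cor1.2d} combined with a log-coordinate gambler's-ruin estimate); this asymmetry is exactly the source of the factor $n$ in the $d=2$ statement. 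By Lemma \ref{importantlemma} a uniformly positive fraction of the conditional mass on $\p C_{n+K}$ sits on ``good'' $u'$ admitting a Harnack chain to $0$ inside $\Z^d\setminus\bar\eta$.

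It then suffices to show $H_A(u',w)\asymp H_{\Z^d\setminus\bar\eta}(0,w)$ for good $u'$. The function $z\mapsto H_{\Z^d\setminus\bar\eta}(z,w)$ is harmonic on $\Z^d\setminus\bar\eta$, and the Harnack chain yields $H_{\Z^d\setminus\bar\eta}(u',w)\asymp H_{\Z^d\setminus\bar\eta}(0,w)$; combined with $H_A(u',w)\leq H_{\Z^d\setminus\bar\eta}(u',w)$ this is the upper bound. For the matching lower bound I apply the decomposition
\[
H_{\Z^d\setminus\bar\eta}(u',w)\;=\;H_A(u',w)+\E^{u'}\!\left[H_{\Z^d\setminus\bar\eta}(S_{\tau_\eta},w);\,\tau_\eta<\tau_{\bar\eta}\right],
\]
and Harnack to the second term (every $\eta$-landing point lies in $C_n$ and is comparable to $0$), reducing the problem to a uniform bound $\Prob^{u'}\{\tau_\eta<\tau_{\bar\eta}\}\leq 1-\delta$.

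In $d\geq 3$ this is where the choice of large $K$ pays off: $\Prob^{u'}\{\tau_{C_n}=\infty\}\geq 1-Ce^{-(d-2)K}$ by transience, so for $K$ fixed sufficiently large relative to the Harnack constant, $\Prob^{u'}\{\tau_\eta<\tau_{\bar\eta}\}\leq 1-\delta$ uniformly. The main obstacle is $d=2$, where recurrence kills this shortcut. There I would bound $\Prob^{u'}\{\tau_{\bar\eta}<\tau_\eta\}$ below directly using planarity: a log-coordinate gambler's-ruin estimate gives positive probability for the walk from $u'$ to reach $\p C_{n+K+1}$ before returning to $C_n$, and the second part of Lemma \ref{sep13.lemma1}, applied with $V$ a long annular crossing of $\bar\eta$, forces the walk to meet $\bar\eta$ before re-entering $C_n\supset\eta$ with probability bounded below. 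The edge case where $m-n$ is small is handled separately using that $\bar\eta$ then has only boundedly many vertices. Plugging the resulting $H_A(u',w)\asymp H_{\Z^d\setminus\bar\eta}(0,w)$ back into the factorization completes the proof.
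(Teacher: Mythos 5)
The paper's proof is a \emph{symmetric} decomposition at a small scale: every excursion from $y$ to $w$ in $A$ is cut at the first time it reaches distance $r=e^{n-4}$ from $y$ and at the last time it is within distance $r$ of $w$, leaving a Green's function $G_A(x,z)\asymp e^{n(2-d)}$ in the middle; Corollaries \ref{cor1}/\ref{cor1.2d} handle the $y$-end and Corollary \ref{cor2} the $w$-end, giving the two factors at once. Your proposal is instead an asymmetric cut, only from the $y$-side, at a sphere $\p C_{n+K}$ with $K$ a ``fixed large constant.''

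There is a genuine gap in this. Your opening factorization
\[
H_{\p A}(y,w)=\sum_{u'\in\p C_{n+K}}\Prob^y\{T_K<\tau_\eta,\,S_{T_K}=u'\}\,H_A(u',w)
\]
rests on the claim that a walk from $y$ ``cannot touch $\bar\eta$ before $T_K$.'' That is false for any $K\geq 1$: the set $\bar\eta\setminus\{w\}$ lies in $C_m\setminus C_{n+1}$, which overlaps the annulus $C_{n+K}\setminus C_{n+1}$ that the walk must cross to reach $\p C_{n+K}$. The condition ``$K<m-n$'' keeps the sphere inside $C_m$ but does nothing to keep the walk away from $\bar\eta$ on the way out. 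The correct stopping is at $T_K\wedge\tau_A$, and then (a) the escape mass is $\Prob^y\{T_K<\tau_{\eta\cup\bar\eta}\}$, which depends on $\bar\eta$ and need not be $\asymp\Es_\eta(y)$; and (b) you have dropped the contribution of walks that reach $w$ before $\p C_{n+K}$, which is not negligible since $w$ sits near $\p C_{n+1}$, well inside $C_{n+K}$. Meanwhile your lower bound for $H_A(u',w)$ explicitly needs $K$ large so that $\Prob^{u'}\{\tau_{C_n}=\infty\}\geq 1-Ce^{-(d-2)K}$ beats the Harnack constant, so you cannot retreat to $K<1$, which is the only regime in which the factorization claim would actually hold. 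The two halves of your argument demand incompatible choices of $K$; the symmetric small-scale cut in the paper (with the Green's function absorbing the whole middle part of the excursion) is what sidesteps this tension.
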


\begin{proof}  Let $S,\tilde S$ be independent random walks
starting at $y,w$, and let $\sigma_r,\tilde \sigma_r$ be
the corresponding stopping times with $r = e^{n-4}$.  Any random walk
path $\omega$ from $y$ to $w$ in $A$ can be decomposed as
\[   \omega = \omega^- \oplus \tilde \omega \oplus \omega^+, \]
where $\omega^-$ is the walk stopped at the first time it reaches
distance $r$ from $y$, and $\omega^+$ is the reversal
of the reversed walk stopped at the first time it reaches distance
$r$ from $z$.
Using this decomposition, we can see that for $d \geq 3$,
\begin{eqnarray*}
H_{\p A}(y,w) &  =  & \sum_{x,z}
\Prob^y\{S_{\sigma_r} = x; \sigma_r < \tau\}
 \,  \Prob^w\{S_{\tilde \sigma_r} = z; \tilde\sigma_r <
 \tilde \tau\}\, G_A(x,z)\\
& \asymp & \Es_\eta(y) \, e^{n(2-d)} \,\Prob^w\{\tilde \sigma_r
< \tilde \tau\} \\
  & \asymp &  \Es_\eta(y) \,  H_{\Z^d \setminus\bar \eta }
(0,w).
\end{eqnarray*}
For $d=2$, we need to replace $\Es_\eta(y)$ with
$n  \, \Es_\eta(y)$.
\end{proof}

\begin{corollary}  \label{cor4.1}
If  $n \leq m-1$,
$\eta,\tilde \eta  \in \saws_n$ with terminal point $y$
and such that $\eta \setminus C_{n-j} = \tilde \eta
\setminus C_{n-j}$, and $\bar \eta 
\in \barsaws_{n+1,m}$ with initial point $w$, 
then if $A = \Z^d \setminus (\eta  \cup \bar \eta )$,
$\tilde A =  \Z^d \setminus (\tilde \eta  \cup \bar \eta )$,
\[      H_{\p A}(y,w)  = H_{\p \tilde A}
(y,w) \, [1 + O(e^{-j})]. \]
%
\end{corollary}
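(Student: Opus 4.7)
The plan is to exploit the path decomposition already used in the proof of Corollary \ref{cor3}. Set $r = e^{n-4}$ and decompose any random-walk path from $y$ to $w$ in $A$ at the first exit $\sigma_r$ from $B(y,r)$ and at the first exit of the time-reversed walk from $B(w,r)$. Because $|y|=e^n$, $|w| \geq e^{n+1}$, and $r = e^{n-4}$, both balls lie at distance strictly greater than $e^{n-j}$ from the origin for every $j \geq 1$, so $B(y,r) \cap C_{n-j} = \eset$ and $B(w,r) \cap C_{n-j} = \eset$; in particular $\eta \cap B(y,r) = \tilde\eta \cap B(y,r)$, and $\bar\eta$ is disjoint from both balls. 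Writing the decomposition as
\[ H_{\p A}(y,w) = \sum_{x,z} P_y(x) \, G_A(x,z) \, P_w(z) \]
(and the analogous identity for $\tilde A$), with $P_y(x) = \Prob^y\{S_{\sigma_r} = x,\, \sigma_r < \tau_\eta\}$ and $P_w$ defined symmetrically from the reversed walk, the weights $P_y$ and $P_w$ are identical for $A$ and $\tilde A$. So it is enough to prove, uniformly in $x \in \p B(y,r)$ and $z \in \p B(w,r)$, that
\[ G_A(x,z) = G_{\tilde A}(x,z) \, [1 + O(e^{-j})]. \]

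Let $B = \Z^d \setminus (\eta \cup \tilde\eta \cup \bar\eta)$, so $B \subset A \cap \tilde A$ with $A \setminus B = \tilde\eta \setminus \eta \subset C_{n-j}$ and $\tilde A \setminus B = \eta \setminus \tilde\eta \subset C_{n-j}$. Applying the strong Markov property at the first hit $\tau_B$ of $\eta \cup \tilde\eta \cup \bar\eta$,
\[ G_A(x,z) = G_B(x,z) + \E^x\bigl[G_A(S_{\tau_B},z);\, S_{\tau_B} \in \tilde\eta \setminus \eta\bigr], \]
and symmetrically for $G_{\tilde A}$. It therefore suffices to bound each correction term by $O(e^{-j}) \, G_B(x,z)$: then $G_A, G_{\tilde A} = G_B \cdot [1 + O(e^{-j})]$ and the desired ratio estimate follows.

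For $d \geq 3$ this is routine: the probability that a walk from $x$ with $|x| \asymp e^n$ ever enters $C_{n-j}$ is $O((e^{n-j}/e^n)^{d-2}) = O(e^{-j(d-2)}) \leq O(e^{-j})$; the factor $G_A(u,z)$ for $u \in C_{n-j}$ is bounded by the free Green's function, which is $O(e^{-n(d-2)})$; and $G_B(x,z) \asymp e^{-n(d-2)}$ because $|x-z| \asymp e^n$ and $B$ contains ample room around a straight line segment from $x$ to $z$. For $d=2$ the main obstacle is that a single Beurling application only yields $O(e^{-j/2})$, not $O(e^{-j})$. I would overcome this exactly as in the proof of Corollary \ref{cor1.2d}: a contributing path must first traverse the annulus $C_n \setminus C_{n-j}$ inward while avoiding $\eta$ (one Beurling factor of order $e^{-j/2}$), and subsequently traverse this annulus outward again to reach $z$ while still avoiding $\eta \cup \bar\eta$ (a second Beurling factor of order $e^{-j/2}$), multiplying to $O(e^{-j})$. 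The delicate point will be to carry this through while keeping the two-dimensional logarithmic factors in $G_B(x,z)$ under control, but the cancellation of logarithms is the same one that makes the proof of Corollary \ref{cor1.2d} work and carries over here; summing the decomposition then yields the corollary.
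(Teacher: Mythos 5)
Your proposal matches the paper's proof: both use the decomposition at $\sigma_r$ and $\tilde\sigma_r$ from Corollary \ref{cor3}, observe that the boundary weights near $y$ and $w$ are unchanged when $\eta$ is replaced by $\tilde\eta$, and reduce the claim to $G_A(x,z) = G_{\tilde A}(x,z)\,[1+O(e^{-j})]$, which for $d=2$ is obtained from a double Beurling factor (entering and re-exiting the annulus $C_n\setminus C_{n-j}$ each cost $O(e^{-j/2})$). Your elaboration via the intermediate set $B$ and the strong Markov property is a correct fleshing-out of the paper's one-line appeal to the Beurling estimate.
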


\begin{proof}
We start as in the last proof with
\[  H_{\p A}(y,w)    =    \sum_{x,z}
\Prob^y\{S_{\sigma_r} = x; \sigma_r < \tau\}
 \,  \Prob^w\{S_{\tilde \sigma_r} = z; \tilde\sigma_r <
 \tilde \tau\}\, G_A(x,z), \]
 and similarly for $\tilde A$ and then use
 \[       G_A(x,z) = G_{\tilde A}(x,z)
   \, [1 + O(e^{-j})].\] 
For $d=2$, this uses the Beurling estimate.
 \end{proof}

There is a simple fact about the loop-erasing process that we will use.
We state it as a proposition (so we can refer to it), but it is an easily verified
property of the deterministic loop-erasing procedure.

\begin{proposition}  \label{looperasefact}
Suppose $S$ is a simple random walk starting at $x \in C_n$ and
$n <  k < m$.  Suppose that 
\begin{itemize}
\item After the first visit to $\p C_k$, the   walk never returns
to $C_n$.
\item After the first visit to $\p C_m$, the  walk never returns
to $C_k$.
\end{itemize}
Suppose that we stop the path some time after it reaches  $\p C_m$
and erase loops.  After doing this, we view the remainder of
the random walk and continue loop-erasing (and hence perhaps
erasing some of the loop-erasure we already have).  Then
\begin{itemize}
\item  The intersection of the original and the new
loop-erased paths with $C_n$ are the same.
\end{itemize}
\end{proposition}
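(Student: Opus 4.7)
The statement is purely deterministic about the loop-erasure procedure applied to a nearest-neighbor path, so the plan is a direct combinatorial argument on the LE recursion. Write $\hat S^T_i = S_{\sigma_i^T}$ for the LE of $S[0,T]$, with $\sigma_0^T = 0$ and $\sigma_i^T = \max\{j \leq T : S_j = S_{\sigma_{i-1}^T + 1}\}$, and analogously for the later stopping time $T' \geq T$ corresponding to ``continuing'' the loop-erasure. Two elementary observations drive everything: (i) by construction $\sigma_i^T$ is strictly increasing in $i$; (ii) the two hypotheses pin down last-visit times sharply, in that every vertex of $C_n$ is visited only at times less than $\tau_k$ (the first hitting time of $\p C_k$), and every vertex of $C_k$ only at times less than $\tau_m$ (the first hitting time of $\p C_m$). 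From (ii) it follows that for any $T \geq \tau_m$, the last-visit time of a vertex of $C_k$ in $S[0,T]$ equals its overall last-visit time and is therefore independent of $T$.

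Let $I(T) = \min\{i : \hat S^T_i \notin C_k\}$. The first step is to show, by induction on $i$, that $\sigma_i^T = \sigma_i^{T'}$ and $\hat S^T_i = \hat S^{T'}_i$ for all $i \leq I(T)$, and consequently that $I(T) = I(T')$. The induction is immediate from (ii): once the sequences agree through $i-1$, the identity $\hat S^T_i = S_{\sigma_{i-1}^T + 1} = \hat S^{T'}_i$ holds, and if in addition $\hat S^T_i \in C_k$ then (ii) forces $\sigma_i^T = \sigma_i^{T'}$ since both equal the overall last-visit time of $\hat S^T_i$. This carries the induction up through $i = I(T)$, and the equality of the two LEs at that index forces $I(T') = I(T)$.

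The second step is to show that neither LE can enter $C_n$ at any index $i \geq I(T)$. Since $\hat S^T_{I(T)} = S_{\sigma_{I(T)}^T}$ lies outside $C_k$ while the walk lies in $C_k$ throughout $[0, \tau_k)$, we must have $\sigma_{I(T)}^T \geq \tau_k$. By (i), $\sigma_i^T > \tau_k$ for all $i > I(T)$, and the first hypothesis (the walk never returns to $C_n$ after $\tau_k$) then yields $\hat S^T_i \notin C_n$ for every $i \geq I(T)$; identical reasoning applies to $T'$. Combining the two steps,
\[
\hat S^T \cap C_n \;=\; \hat S^{T'} \cap C_n \;=\; \bigl\{\hat S^T_i : 0 \leq i < I(T),\; \hat S^T_i \in C_n\bigr\}.
\]
There is no real obstacle in this argument: its content is simply that the second hypothesis determines the LE on its portion inside $C_k$ from $S[0,\tau_m]$ alone, while the first hypothesis prevents the LE from returning to the inner ball $C_n$ once it has exited $C_k$.
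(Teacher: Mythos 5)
Your proof is correct and rests on the same observation the paper's terse remark leaves implicit: hypothesis 2 makes the last-visit time of every vertex of $C_k$ the same in $S[0,T]$ as in $S[0,T']$ for any $T,T'\ge\tau_m$, so the loop-erasures agree up to their common first exit of $C_k$, while hypothesis 1 keeps both loop-erasures out of $C_n$ from that index on; your induction simply makes this precise. One small notational slip: for an unconditioned walk one should take $\sigma_0^T=\max\{j\le T: S_j=S_0\}$ rather than $\sigma_0^T=0$, but since $S_0\in C_n\subset C_k$ your observation (ii) still gives $\sigma_0^T=\sigma_0^{T'}$, so the base case and the rest of the argument are unaffected.
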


Indeed, in order to erase a point $x$ in the intersection of
the loop-erasure and $C_n$, the random walk would have to visit
a point on the random walk that was visited before the last
return to $x$.  There is no point in $\Z^d \setminus C_k$ that
satisfies this, and the random walk visits no point in $C_k$
after it has reached $\p C_m$.

This gives a general procedure to give lower bounds
on the probabilities of certain    events
for the loop-erased walk.
\begin{itemize}
\item  If $n < m$, then in the measure $\mu_m$, given the
initial segment $\eta_n \in \saws_n$, the continuation is obtained
by taking a simple random walk conditioned to avoid $\eta_n$ and 
erasing loops.

\item  Using our lemma and its corollaries, there is a positive
probability that this simple random walk will start by  reaching
radius $e^{n + (1/10)}$ without going more than distance
$e^{n + (1/5)}$ from the starting point.

\item  Given that, we can consider random walk conditioned to
avoid $C_n$ which is conditioning on an event of positive probability
uniformly bounded away from zero.  For $d=2$,
we need to use random walk conditioned to avoid some
$\eta \in C_n$ (see Lemmas \ref{lemma.sep8.0}
and \ref{lemma.sep8}).

\item  The loop-erased path is a subpath of the simple random walk
path, so if we know the simple path stays in some set, then so
does the loop-erased path.

\end{itemize}

There are many applications of this; we state one as a corollary here.

\begin{corollary} \label{extendcor}
There exists $c > 0$ such that the following holds.  Suppose
$\eta  \in \saws_n$ with terminal point $y$ and $\bar \eta 
\in \barsaws_{n+1,m}$ with initial point $w$, 
$A = \Z^d \setminus (\eta  \cup \bar \eta )$, and $\omega$ is
a simple random walk excursion
starting at $y$ conditioned to leave
$A$ at $w$.  Let
\[     V =  V_n(y,w)= (C_{n+1} \setminus C_n)
\cup \{w' :|w' - w| \leq e^{n-3}\} \cup \{y': |y - y'|
\leq e^{n-3} \}. \] Then,
\begin{itemize}
\item  The probability that $\omega \subset V$
is at least $c$.
\item  If it is also known that
$         y,w \in \{x =(x_1,x_2,x_3) \in \Z^3:
x_1 \geq |x|/2 \}, $
then the probability that 
$ \omega  \subset
\{x =(x_1,x_2,x_3) \in V:
x_1 \geq |x|/4 \}$
is at least $c$.
\end{itemize}
\end{corollary}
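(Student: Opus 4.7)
The plan is to lower-bound
\[
\Prob\{\omega \subset V\} = \frac{H_{\p A}^V(y,w)}{H_{\p A}(y,w)},
\]
where $H_{\p A}^V(y,w)$ denotes the contribution to the boundary Poisson kernel from walk paths that stay inside $V$. Corollary \ref{cor3} already identifies $H_{\p A}(y,w)$ up to multiplicative constants, so the task reduces to proving a matching lower bound on $H_{\p A}^V(y,w)$.

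I would mimic the three-part decomposition used in the proof of Corollary \ref{cor3}. Set $r = e^{n-4}$ and let $\sigma_r$ be the first time the forward walk from $y$ reaches distance $r$ from $y$, with $\tilde \sigma_r$ the analogous time for the walk reversed from $w$. A first-entrance / last-exit decomposition gives
\[
H_{\p A}(y,w) = \sum_{x,z} \Prob^y\{S_{\sigma_r}=x,\;\sigma_r<\tau\}\, G_A(x,z)\, \Prob^w\{\tilde S_{\tilde\sigma_r}=z,\;\tilde\sigma_r<\tilde\tau\}.
\]
The first and third pieces of any such decomposed path lie inside balls of radius $r<e^{n-3}$ around $y$ and $w$, hence inside $V$. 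Consequently, restricting to paths in $V$ only changes the middle factor, yielding $H_{\p A}^V(y,w) \geq \sum_{x,z}(\text{same two probabilities})\cdot G_{V\cap A}(x,z)$. By Lemma \ref{importantlemma} together with Corollaries \ref{cor1} and \ref{cor1.2d}, I may further restrict to $|x|\geq e^n+r/2$ and $|z|\leq e^{n+1}-r/2$ at the cost of only a constant factor.

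The main technical step is the Green's function comparison
\[
G_{V\cap A}(x,z) \geq c\, G_A(x,z)
\]
uniformly over such $x,z$. Since $V\cap A$ contains $(C_{n+1}\setminus C_n)\setminus\{y,w\}$ and both $x,z$ sit at Euclidean distance at least $r/2$ from $\p C_n$ and $\p C_{n+1}$, this is purely a fact about simple random walk in the annular region. For $d\geq 3$ the upper bound $G_A(x,z) = O(e^{n(2-d)})$ follows from $G_A\leq G_{\Z^d}$, and the matching lower bound on $G_{V\cap A}(x,z)$ comes from the invariance principle: a random walk confined to the annulus reaches any preassigned neighborhood of $z$ with positive probability before exiting, uniformly in $x,z$. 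The $d=2$ case runs along the same lines, with the Beurling estimate and Lemma \ref{lemma.sep8.0} absorbing near-approaches of the walk to $\eta$ or $\bar\eta$ and the logarithmic corrections of the 2D Green's function handled through the $n\,\Es_\eta(y)$ factor of Corollary \ref{cor3}.

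For the cone refinement, the hypothesis $y,w\in\{x_1\geq|x|/2\}$ together with $r=e^{n-4}\ll\min(|y|,|w|)$ forces the balls around $y$ and $w$ into $\{x_1\geq|x|/4\}$ for all large $n$, so the first and third pieces automatically satisfy the cone constraint. The middle piece must also be confined to the wider cone, but by the invariance principle a random walk in an annulus of scale $e^n$ started and targeted well inside a half-space stays inside any slightly narrower half-space with positive probability, contributing only a constant factor. The main obstacle throughout is the two-dimensional Green's function comparison, where logarithmic divergences and the possible proximity of $\eta$ or $\bar\eta$ to the annulus must be handled via the Beurling estimate rather than the simpler transience arguments available for $d\geq 3$.
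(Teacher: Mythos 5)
Your approach---cut the boundary Poisson kernel at scale $r=e^{n-4}$ around each endpoint using the same decomposition that appears in the proof of Corollary~\ref{cor3}, note that the two end pieces automatically lie in $V$, restrict $x,z$ to the ``good'' annular range via Lemma~\ref{importantlemma}, and then compare $G_{V\cap A}(x,z)$ to $G_A(x,z)$---is precisely the argument the paper has in mind. The paper does not write a formal proof; it states the corollary as an ``application'' of the preceding bullet-point procedure, but unwinding that procedure for a two-sided excursion gives exactly the decomposition you use, so this is the same proof.

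One step in your write-up that you should not wave away is the $d=2$ Green's-function comparison. The lower bound $G_{V\cap A}(x,z)\gtrsim 1$ is indeed just the invariance principle in an annulus. But the upper bound $G_A(x,z)\lesssim 1$ is where the real content is: a priori, a walk from $x$ that is allowed to leave the annulus could revisit $z$ many times (recurrence), and $G_{\Z^2\setminus\{y\}}(x,z)\asymp n$. What saves you is that $x$ sits within $e^{n-4}$ of $\eta$ and $z$ within $e^{n-4}$ of $\bar\eta$, both of which are connected sets of diameter $\gtrsim e^n$; by the Beurling estimate, a walk from $x$ that escapes the annular region without hitting $\eta\cup\bar\eta$ and returns to within $O(1)$ of $z$ pays a summable geometric penalty at each excursion. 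Your phrase about the ``$n\,\Es_\eta(y)$ factor of Corollary~\ref{cor3}'' handling the logarithmic corrections is not quite right---that factor cancels in the ratio $H^V/H$---and it is Beurling, applied directly to bound $G_A(x,z)\lesssim 1$, that does the work. This is a detail rather than a structural gap, but it is the only place in the lemma where the two-dimensional case is genuinely harder.
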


Obviously these results hold for the loop-erasure of
$\omega$ as well.

\subsection{Loop-erased random walk}  \label{lerwsec}

In this section  discuss facts about
a single loop-erased random walk (LERW) in
$\Z^d, d \geq 2$.

\begin{itemize}

\item If $S_j$ is a simple random walk starting at
the origin  conditioned to
never return to the origin  with loop-erasure
$\hat S_j$, we let
\[       \rho_n = \min\{j: S_j \not \in C_n\} ,\;\;\;\;
\hat \rho_n = \max\{j: S_j \in C_n\} , \]
\[  T_n = \min\{j: \hat S_j \not \in C_n\},\;\;\;\;
\bar T_n = \max\{j: \hat S _j \in C_n\} ,\]
\[ \bar T_{n,m} = \max\{ j \leq T_m:\hat S_j \in C_n\}.\]
Note that $\bar T_n + 1 \geq T_n$.

\item $\mu_n$ is the distribution of $\hat S[0,T_n]$.  It is
a probability measure supported on $\saws_n$.


\item  If $n < m$, $\mu_{n,m}$ is the distribution of
$\hat S  [\bar T _{n,m},T _m]$.  It is a probability measure
on $\barsaws_{n,m}.$

\item  In the next subsection we
also write $\mu_n$ and $\mu_{n,m}$ for the product measures
$\mu_n \times \mu_n$ on $\saws_n^2$ and $\mu_{n,m}
\times \mu_{n,m}$ on  $\barsaws_{n,m}^2.$

\end{itemize}

We recall the following fact that
follows from the decomposition \eqref{jun24.1}.  Let
\[    G_0 = \left\{\begin{array}{ll} G(0,0), & d \geq 3 \\
    1, & d  = 2 . \end{array} \right. \]

\begin{proposition}  \label{basiclerw}
If $\eta \in \saws_n$ with terminal point $z$,
then
\begin{equation}  \label{jun3.1}
\mu_n (\eta)
=  (2d)^{-|\eta|} \, F_\eta \, G_0 \, \Es_\eta(z).  
\end{equation}
Moreover, the distribution of $\hat S[T_n,\infty]$
given $\hat S[0,T_n] = \eta$ is the same as that
obtained as follows:
\begin{itemize}
\item Take
a simple
random walk starting at $z$ conditioned to never return
to $\eta$.
\item Erase the loops chronologically.
\end{itemize}
\end{proposition}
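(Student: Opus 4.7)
The plan is to exploit the unique decomposition \eqref{jun24.1}: any realization of the conditioned random walk $S$ whose chronological loop-erasure begins with $\eta = [\eta_0, \ldots, \eta_N]$ (with $N = |\eta|$) can be written uniquely as
\[ S = [\eta_0, \eta_1] \oplus l_1 \oplus \cdots \oplus [\eta_{N-1}, \eta_N] \oplus l_N \oplus R, \]
where $l_j$ is a loop at $\eta_j$ staying in $A_j = \Z^d \setminus \{\eta_0, \ldots, \eta_{j-1}\}$ and $R$ is the infinite continuation from $\eta_N = z$. The maximality in the definition of the $\sigma_j$'s forces $R$ to avoid $\eta$ at every positive time, and conversely any walk of this form has loop-erasure beginning with $\eta$; since $z \in \p C_n$, such a walk also has $T_n = N$. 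Thus $\mu_n(\eta)$ is the sum of the weights of all such configurations.

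The contributions factorize. The $N$ prescribed steps contribute $(2d)^{-N}$. Summing $(2d)^{-|l|}$ over all nearest-neighbor loops $l$ at $\eta_j$ in $A_j$ yields $G_{A_j}(\eta_j, \eta_j)$, so the product over $j$ is $F_\eta$. The free data in $R$ contributes the escape weight $\tfrac{1}{2d} \sum_{y \sim z} g_\eta(y) = \Es_\eta(z)$, via a one-step decomposition from $z$ and the definition of $g_\eta$. It remains to insert the normalization corresponding to the conditioning on never returning to $0$. For $d \geq 3$ this is the Doob $h$-transform by $h = G(\cdot, 0)/G_0$, and since our configurations automatically never revisit $0$ (each $l_j$ lies in $A_j$, which contains $0$ only for $j = 0$, and $R$ avoids $\eta \ni 0$), the $h$-transform simply rescales the simple-walk weight by $G_0$, giving $\mu_n(\eta) = (2d)^{-|\eta|} G_0 F_\eta \Es_\eta(z)$ as claimed.

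For $d = 2$ the conditioned walk is the $h$-transform by the potential kernel $a$ on $\Z^2 \setminus \{0\}$, with an unconditioned first step from $0$ to $\eta_1$. The one-step $h$-factors $a(\eta_j)/a(\eta_{j-1})$ along the deterministic backbone telescope to $a(\eta_N)/a(\eta_1) = a(\eta_N)$; the loop sums are unchanged because closed loops make the $a$-ratios collapse to $1$; and the $h$-process escape probability from $z$ equals $\Es_\eta(z)/a(z)$, which cancels the surviving factor $a(\eta_N)$ and leaves exactly $\Es_\eta(z)$. So the same structural identity holds with $G_0 = 1$.

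The second assertion is immediate from the strong Markov property at $\sigma_N$ applied to the decomposition: conditional on $\hat S[0, T_n] = \eta$, the continuation $R$ is distributed as simple random walk from $z$ conditioned to avoid $\eta$ (a positive-probability event for $d \geq 3$, and defined via the $h$-process for $d = 2$), and chronological loop-erasure of $R$ reproduces $\hat S[T_n, \infty)$. The only real bookkeeping obstacle is the planar telescoping above; isolating the first step from $0$ and then collapsing the $a$-ratios against the escape factor makes the cancellation transparent, and this is where I expect the only (minor) technical care to be required.
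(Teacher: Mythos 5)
Your proof is correct and follows the same route the paper intends: the paper states Proposition~\ref{basiclerw} as a fact ``that follows from the decomposition \eqref{jun24.1}'' without writing out the computation, and you have simply made that computation explicit — unique decomposition into backbone steps, returning loops whose generating sums give the $G_{A_j}(\eta_j,\eta_j)$ factors, and a tail avoiding $\eta$ whose weight is the escape quantity — for $d\geq 3$ directly and for $d=2$ via the $a$-$h$-process telescoping. One small slip: for $d\geq 3$ the conditioning is the $h$-transform by the escape probability $h(x)=\Prob^x\{S[0,\infty)\not\ni 0\}=1-G(x,0)/G_0$, not by $G(\cdot,0)/G_0$ (which is the \emph{hitting} probability); your substantive conclusion — that restricting to paths never revisiting $0$ and renormalizing contributes exactly a factor $G_0$ — is nonetheless correct, since $\Prob^0\{S[1,\infty)\not\ni 0\}=1/G_0$.
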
 

\begad

As we have mentioned, there are two different ways to define ``loop-erased
random walk stopped at $\p C_n$'': one is as the loop-erasure of $S[0,\rho_n]$,
and the other is as $\hat S[0,T_n]$.  These measures are significantly
different near the terminal point.  However,  considered them as measures
on $\saws_{n-1}$ by truncation, they  are comparable.

We prefer to consider $\mu_n$, that is the distribution of $\hat S[0,T_n]$, because
we know that the distribution of the remainder of the path can be obtained by
erasing loops from a simple random walk starting at $\hat S(T_n)$ conditioned
to never return to $\hat S[0,T_n]$.  The estimates from Section \ref{rwlemmasec}
apply to the conditioned random walk and the loop-erasure is a subpath of the
conditioned walk.
As an example, Corollary \ref{feb7.cor1} implies that for $d=2,3$,
 there exists   $c < \infty$
such that conditioned on $\hat S[0,T_n]$, the probability that $\hat S[T_n,\infty)$
intersects $C_{n-j}$ is less than $c \, e^{-j}.$ 

\endad

If $d=2$, we define $\kappa_n$ by saying that $\log \kappa_n$
is the measure of loops in $\hat \Z^2$ that disconnect $0$
from $\p C_n$.  (We do not require the loop to lie in $\hat C_n$.)
In this case, if $\eta \in \saws_n$, we can write
\begin{equation} \label{jun3.1.alt}
\mu_n (\eta)
=  (2d)^{-|\eta|} \, \kappa_n \,  F^{*,n}_\eta \, \Es_\eta(z), \;\;\;\;d=2,
\end{equation}
where $\log F^{*,n}_\eta$ is the measure of loops in $\hat \Z^2$
that intersect $\eta$ but do not disconnect $0$ from $\p C_n$.


The distribution $\mu_{n,m}$ is a little complicated, but we
will only need to know it up to uniform multiplicative constants.

\begin{proposition}
If $n  \leq m-1$, and $\eta \in \barsaws_{n ,m}$
with initial point $w$ and terminal point $z$, then
\[  \mu_{n ,m}(\eta) \asymp 
(2d)^{-|\eta|} \, F_\eta \,\Es_\eta(z)\,
H_{\p(\hat \Z^d \setminus \eta)}(0,w).\]
\end{proposition}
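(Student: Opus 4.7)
The plan is to decompose the realization $\hat S[0,T_m]=\tilde\eta\in\saws_m$ underlying the event $\{\hat S[\bar T_{n,m},T_m]=\eta\}$ as $\tilde\eta=\eta^1\oplus\eta$, where $\eta^1$ is an SAW from $0$ to $w$ in $C_m$ with $\eta^1\cap\eta=\{w\}$. Since $w\in C_n$ is the terminal vertex of $\eta^1$ and $\eta\setminus\{w\}\subset C_m\setminus C_n$, the last visit of $\tilde\eta$ to $C_n$ falls automatically at position $|\eta^1|$, so the decomposition imposes no additional constraint and
\[
  \mu_{n,m}(\eta)\;=\;\sum_{\eta^1}\mu_m(\eta^1\oplus\eta).
\]
Applying Proposition~\ref{basiclerw} term by term and using inclusion--exclusion on the loop measure to factor the loop weights as $F_{\eta^1\oplus\eta}=F_\eta\cdot F_{\eta^1}(\hat\Z^d\setminus\eta)$ (with the analogous $\kappa_m,F^{*,m}$ decomposition for $d=2$), this becomes
\[
  \mu_{n,m}(\eta)\;=\;(2d)^{-|\eta|}F_\eta\,G_0\sum_{\eta^1}(2d)^{-|\eta^1|}F_{\eta^1}(\hat\Z^d\setminus\eta)\,\Es_{\eta^1\oplus\eta}(z).
\]

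The proof then reduces to two comparisons. First, I would show $\Es_{\eta^1\oplus\eta}(z)\asymp\Es_\eta(z)$ uniformly for the $\eta^1$ that dominate the sum: when $\eta^1\subset C_n$ the two sets agree on $C_m\setminus C_n$, and Corollary~\ref{cor1} ($d\geq 3$) or Corollary~\ref{cor1.2d} ($d=2$), applied with $j=m-n$, gives the comparison; the contribution from $\eta^1$ venturing into $C_m\setminus C_n$ is controlled by transience ($d\geq 3$) or by the Beurling estimate ($d=2$). Second, the standard loop-erasure decomposition of first-hitting walks gives
\[
  H_{\p(\hat\Z^d\setminus\eta)}(0,w)\;=\;\sum_{\eta^1:\,0\to w,\;\eta^1\cap\eta=\{w\}}(2d)^{-|\eta^1|}F_{\eta^1}(\hat\Z^d\setminus\eta),
\]
with no restriction to $C_m$. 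Since $G_0\asymp 1$ in either dimension, it suffices to show that the restriction $\eta^1\subset C_m$ costs at most a bounded factor; in $d\geq 3$ this follows because a walk from $0$ that reaches $\p C_m$ has only probability $O(e^{(n-m)(d-2)})$ of returning to $C_n$, while in $d=2$ the Beurling estimate bounds the probability of reaching $\p C_m$ without hitting $\eta$ by $O(e^{-(m-n)/2})$; the ``direct'' walks thereby contribute a uniformly bounded-below fraction of $H_{\p(\hat\Z^d\setminus\eta)}(0,w)$.

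The main obstacle is obtaining these tail bounds \emph{uniformly} in the shape of $\eta$, especially for $d=2$, where recurrence makes $\Es_\eta(z)$, the capacity of $\eta$, and $H_{\p(\hat\Z^d\setminus\eta)}(0,w)$ all depend logarithmically on the scales $n,m$. One must carefully track the interplay between these logarithmic factors using Lemma~\ref{lemma.sep8.0}, the Beurling estimate, and the disconnection bound~\eqref{disconnect} from Section~\ref{rwlemmasec}, together with the $\kappa_m$-based decomposition of the loop measure that isolates the diverging contribution from loops disconnecting $0$ from $\p C_m$.
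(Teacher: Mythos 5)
Your outline follows the paper's proof quite closely: the decomposition $\mu_{n,m}(\eta)=\sum_{\eta^1}\mu_m(\eta^1\oplus\eta)$, the factorization via Proposition~\ref{basiclerw} and the loop-measure splitting $F_{\eta^1\oplus\eta}=F_\eta\,F_{\eta^1}(\hat\Z^d\setminus\eta)$, and the identity $H_{\p(\hat\Z^d\setminus\eta)}(0,w)=\sum_{\eta^1}(2d)^{-|\eta^1|}F_{\eta^1}(\hat\Z^d\setminus\eta)$ are exactly the paper's steps. One minor remark on efficiency: for the upper bound the paper never needs two-sided comparability of the escape probabilities; it simply uses the trivial monotonicity $\Es_{\eta^1\oplus\eta}(z)\le\Es_\eta(z)$ (conditioning on avoiding a larger set can only hurt) together with extending the sum over $\eta^1$, so there is no need to control ``non-dominant'' $\eta^1$ in that direction.

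The substantive issue is in your lower bound. You assert it suffices to show that ``the restriction $\eta^1\subset C_m$ costs at most a bounded factor,'' and you then estimate the probability that a walk reaches $\p C_m$ and returns to $C_n$. But the constraint $\eta^1\subset C_m$ is \emph{automatic} (it comes from $\eta^1\oplus\eta\in\saws_m$); removing it is only relevant to the upper bound, where it was already handled by extending the sum. For the lower bound what you actually need is the opposite: you must further restrict the sum to $\eta^1$ lying in a small ball such as $C_{n+\frac12}$ so that Corollaries~\ref{cor1}/\ref{cor1.2d} give $\Es_{\eta^1\oplus\eta}(z)\gtrsim\Es_\eta(z)$, and then show this \emph{restricted} sum is still a bounded-below fraction of $H_{\p(\hat\Z^d\setminus\eta)}(0,w)$. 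That is not the same as bounding the contribution of walks that reach $\p C_m$: for $m-n=O(1)$ such walks are not rare, and even for large $m-n$ a walk can leave $C_n$ and come back to $w$ without ever approaching $\p C_m$, so your transience/Beurling bound on excursions to $\p C_m$ does not address the loss from the $C_n$-restriction. The paper closes this with Corollary~\ref{cor2}, which shows $H_B(0,w)\asymp e^{n(2-d)}\Prob^w\{\sigma_r<\tau\}$ uniformly over all admissible $B$, hence $H_{\p(\hat C_{n+1/2}\setminus\eta)}(0,w)\asymp H_{\p(\hat\Z^d\setminus\eta)}(0,w)$; that is the missing estimate, and it rests on Lemma~\ref{importantlemma} rather than on transience or Beurling alone.
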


Recall that $H_{\Z^d \setminus \eta}(0,w) =G_{\Z^d \setminus \eta}(0,0)\,
H_{\p(\hat \Z^d \setminus \eta)}(0,w)
 $.
If $d \geq 3$, $G_{\Z^d \setminus \eta}(0,0) \asymp 1$.
  If $d=2$,  
$G_{\Z^d \setminus \eta}(0,0) \asymp n$.

\begin{proof}   We start by writing the exact expression
\begin{eqnarray*}
\mu_{n ,m}(\eta) & =  & \sum_{\eta' \oplus \eta \in \saws_m}
      \mu_m(\eta' \oplus \eta) \\
     & = & (2d)^{-|\eta|} \, F_\eta \,
           \sum_{\eta' \oplus \eta \in \saws_m} \Es_{\eta' \oplus
           \eta}(z) \, (2d)^{-|\eta'|} \, F_{\eta'}(\hat \Z^d \setminus
           \eta). 
\end{eqnarray*}

For the  upper bound, we use $\Es_{\eta' \oplus
           \eta}(z) \leq \Es_\eta(z)$ to see that
\[ \mu_{n ,m}(\eta)  \leq (2d)^{-|\eta|} \, F_\eta \,\Es_\eta(z)
\,  \sum_{\eta' \oplus \eta \in \saws_m}  
 (2d)^{-|\eta'|} \, F_{\eta'}(\hat \Z^d \setminus
           \eta). \]
The last sum is larger if we remove the restriction that $\eta' \oplus
\eta \subset C_m$ and write just
\begin{equation}  \label{jun12.5}
\sum_{\eta'   }  
 (2d)^{-|\eta'|} \, F_{\eta'}(\hat \Z^d \setminus
           \eta), 
           \end{equation}
where the sum is over all SAWs $\eta'$ starting at the origin,
ending at $w$, and otherwise staying in $\Z^d \setminus \eta$.
Using a decomposition similar to \eqref{jun24.1}, we can see that
$(2d)^{-|\eta'|} \, F_{\eta'}(\Z^d \setminus
           \eta)$
 is exactly the probability that a random walk starting at $0$ 
 stopped upon reaching $\eta \cup \{0\}$ 
 stops in finite time
  and the loop-erasure of the stopped walk is $\eta'$. 
Therefore the sum in \eqref{jun12.5} equals 
$H_{\p(\hat \Z^d \setminus \eta)}(0,w)$.

For the lower bound we write
\[ \mu_{n ,m}(\eta)  \geq (2d)^{-|\eta|} \, F_\eta \,
           \sum_{\eta' \oplus \eta \in \saws_m,\;\; \eta'\subset
           C_{n+ \frac 12} } \Es_{\eta' \oplus
           \eta}(z) \, (2d)^{-|\eta'|} \, F_{\eta'}(\hat \Z^d \setminus
           \eta). \]
Using Corollaries \ref{cor1} and \ref{cor1.2d},
  we can see this is greater than a
constant times
\[ (2d)^{-|\eta|} \, F_\eta \,\Es_\eta(z)\, 
           \sum_{\eta' \oplus \eta \in \saws_m,\;\; \eta'\subset
           C_{n+ \frac 12} }   (2d)^{-|\eta'|} \, F_{\eta'}(\hat \Z^d \setminus
           \eta). \]
As in the last paragraph, we see that 
the  sum equals $H_{\p(\hat C_{n+ \frac 12}  \setminus \eta)}(0,z)$.
We can use Corollary \ref{cor2} to see that
\[  H_{\p( \hat C_{n+ \frac 12}  \setminus \eta)}(0,z)
\geq c \,   H_{\p(\hat \Z^d \setminus \eta)}(0,z).\]

\end{proof}

We will now focus on the decomposition \eqref{decomp2} of $\eta \in \saws_m$.
The next lemma shows that $\eta_n$ and $\eta_{n+1,m}$ are ``independent up
to multiplicative constant''.  A two-dimensional version of this result
can be found in \cite[Section 4.1]{Masson}.

\begin{proposition} \label{indprop}
If $n \leq m-2$, $\eta \in \saws_n,\tilde \eta \in \saws_{n+1,m}$, 
then
\begin{equation}  \label{jun24.2}
\sum_{\eta^*} \mu_m(\eta \oplus \eta^* \oplus
\tilde  \eta )\asymp \mu_n(\eta) \, \mu_{n+1,m}(\tilde \eta ). 
\end{equation}
Here the sum is over all SAWs $\eta^*$ such that
$\eta \oplus \eta^* \oplus
\tilde \eta \in \saws_m$.

\end{proposition}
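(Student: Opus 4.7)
The plan is to expand $\mu_m(\eta \oplus \eta^* \oplus \tilde\eta)$ using the exact formula \eqref{jun3.1}, sum over $\eta^*$, and compare with $\mu_n(\eta)\mu_{n+1,m}(\tilde\eta)$ using the asymptotic formula from the previous proposition. Writing $\bgamma = \eta \oplus \eta^* \oplus \tilde\eta$ (using $\bgamma$ informally as the concatenated path) and letting $y$, $w$, $z$ be respectively the terminal vertex of $\eta$, the initial vertex of $\tilde\eta$, and the terminal vertex of $\tilde\eta$, I would begin from
\[
\mu_m(\eta \oplus \eta^* \oplus \tilde\eta) \;=\; (2d)^{-|\eta|-|\eta^*|-|\tilde\eta|}\, G_0 \, F_{\eta \oplus \eta^* \oplus \tilde\eta}\, \Es_{\eta \oplus \eta^* \oplus \tilde\eta}(z).
\]
Since $\eta \cup \eta^* \subset \overline{C_{n+1}}$ lies at distance $\geq e^m - e^{n+1}$ from $z \in \p C_m$ and $m \geq n+2$, the sets $\eta \cup \eta^* \cup \tilde\eta$ and $\tilde\eta$ agree inside the ball of radius $e^{m-4}$ about $z$, so Corollaries \ref{cor1} and \ref{cor1.2d} give $\Es_{\eta \oplus \eta^* \oplus \tilde\eta}(z) \asymp \Es_{\tilde\eta}(z)$ uniformly.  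Next, by additivity of the loop measure on disjoint classes of loops,
\[
F_{\eta \oplus \eta^* \oplus \tilde\eta} \;=\; F_\eta \cdot F_{\tilde\eta}(\hat\Z^d \setminus \eta) \cdot F_{\eta^*}(\hat\Z^d \setminus (\eta \cup \tilde\eta)),
\]
and only the last factor depends on $\eta^*$.

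The remaining sum $\sum_{\eta^*} (2d)^{-|\eta^*|} F_{\eta^*}(\hat\Z^d \setminus (\eta \cup \tilde\eta))$ is, by the standard decomposition of a random walk into loop erasure plus bubbles analogous to \eqref{jun24.1}, comparable up to a bounded constant to the boundary Poisson kernel $H_{\p A}(y,w)$ for $A = \Z^d \setminus (\eta \cup \tilde\eta)$; the restriction $\eta^* \subset C_m$ costs only a bounded factor since random walk paths from $y$ to $w$ that exit $C_m$ are suppressed by Lemma \ref{june6.lemma1} for $d \geq 3$ and by the Beurling estimate for $d = 2$.  Corollary \ref{cor3} then yields
\[
H_{\p A}(y,w) \;\asymp\; \Es_\eta(y)\, H_{\Z^d \setminus \tilde\eta}(0,w) \qquad (d\geq 3),
\]
with an additional factor $n$ in $d=2$, and the last-exit identity $H_{\Z^d \setminus \tilde\eta}(0,w) = G_{\Z^d \setminus \tilde\eta}(0,0)\, H_{\p(\hat\Z^d \setminus \tilde\eta)}(0,w)$ puts everything in the form appearing in the asymptotic expression
\[
\mu_{n+1,m}(\tilde\eta) \;\asymp\; (2d)^{-|\tilde\eta|}\, F_{\tilde\eta}\, \Es_{\tilde\eta}(z)\, H_{\p(\hat\Z^d \setminus \tilde\eta)}(0,w)
\]
from the previous proposition.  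Substituting and canceling the matching factors $G_0$, $(2d)^{-|\eta|-|\tilde\eta|}$, $F_\eta$, $\Es_\eta(y)$, $\Es_{\tilde\eta}(z)$, $F_{\tilde\eta}$, and $H_{\p(\hat\Z^d \setminus \tilde\eta)}(0,w)$ against $\mu_n(\eta) = (2d)^{-|\eta|} G_0 F_\eta \Es_\eta(y)$ and the above, the estimate reduces to showing that $F_{\tilde\eta}(\hat\Z^d \setminus \eta) \cdot G_{\Z^d \setminus \tilde\eta}(0,0) \cdot [1\text{ or }n]$ is comparable to $F_{\tilde\eta}$ with constants depending only on $d$.

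For $d \geq 3$ this last step is immediate: $G_{\Z^d \setminus \tilde\eta}(0,0) \asymp G_0$, and the loop measure of loops in $\hat\Z^d$ hitting both $\eta$ and $\tilde\eta$ is bounded by Lemma \ref{june6.lemma1}, so $F_{\tilde\eta}(\hat\Z^d \setminus \eta) \asymp F_{\tilde\eta}$.  The main obstacle is the $d=2$ case, where both $G_{\Z^2 \setminus \tilde\eta}(0,0) \asymp n$ and the extra $n$ from Corollary \ref{cor3} produce an unbounded left-hand side; the compensating growth must come from the loop-measure defect $F_{\tilde\eta}/F_{\tilde\eta}(\hat\Z^2 \setminus \eta) = \exp(m(\text{loops in }\hat\Z^2 \text{ hitting both } \eta \text{ and } \tilde\eta))$.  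The technical heart of the argument is to decompose these loops into disconnecting and non-disconnecting pieces using Lemmas \ref{june6.lemma1.2d} and \ref{sep14.lemma1}, track the contributions of each (the annulus-crossing loops through Lemma \ref{sep14.lemma1}, and the harmonic-measure correction at scales between $1$ and $e^n$) with constants uniform in $\eta,\tilde\eta$, and verify that they precisely balance the $n$-factors produced by Corollary \ref{cor3} and the last-exit decomposition; a parallel uniform bookkeeping is also needed for the $\eta^* \subset C_m$ restriction via the Beurling estimate.
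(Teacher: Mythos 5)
The structure of your argument — expand $\mu_m$ via \eqref{jun3.1}, factor $F_{\eta\oplus\eta^*\oplus\tilde\eta}$, recognize $\sum_{\eta^*}(2d)^{-|\eta^*|}F_{\eta^*}(A)$ as $H_{\p A}(y,w)$, then invoke Corollary \ref{cor3} — is exactly the paper's route, so the skeleton is right. However there is a genuine gap right at the step where you replace $\Es_{\eta\oplus\eta^*\oplus\tilde\eta}(z)$ by $\Es_{\tilde\eta}(z)$ uniformly over $\eta^*$. You justify this by asserting $\eta\cup\eta^*\subset\overline{C_{n+1}}$, but that containment is false in general: in the decomposition \eqref{decomp2}, $\eta^*$ runs from the first exit of $C_n$ to the \emph{last} entry to $C_{n+1}$, and in between it can wander arbitrarily far out in $C_m$ — potentially right up to a neighborhood of $z\in\p C_m$ — before returning. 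So Corollaries \ref{cor1}/\ref{cor1.2d} cannot be applied with a ball of radius $e^{m-4}$ about $z$ for all $\eta^*$ in the sum, and the two-sided bound $\Es_{\eta'}(z)\asymp\Es_{\tilde\eta}(z)$ does not hold uniformly.

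The paper avoids this by treating the two inequalities asymmetrically. For the upper bound it uses only the trivial monotonicity $\Es_{\eta'}(z)\leq\Es_{\tilde\eta}(z)$ (since $\tilde\eta\subset\eta'$), which holds for every $\eta^*$, together with the fact that dropping the $\subset C_m$ constraint on $\eta^*$ only increases the sum and yields exactly $H_{\p A}(y,w)$. For the lower bound it restricts the sum to $\eta^*\subset C_{n+\frac32}$ — a thin shell — and only there invokes Corollaries \ref{cor1}/\ref{cor1.2d} to get $\Es_{\eta'}(z)\asymp\Es_{\tilde\eta}(z)$; Corollary \ref{cor2} then shows this restricted sum still captures a uniformly positive fraction of $H_{\p A}(y,w)$. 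Without some such restriction of $\eta^*$, your lower bound is not established. Separately, you correctly identify that the $d=2$ case requires the factors of $n$ from Corollary \ref{cor3} and from $G_{\Z^2\setminus\tilde\eta}(0,0)\asymp n$ to be absorbed by a loop-measure ratio, but you stop at flagging this "technical heart" rather than proving it. The paper closes that loop with the two facts $F_{\tilde\eta}^{*,n}\asymp F_{\tilde\eta}(\Z^2\setminus\eta)$ (Lemma \ref{sep14.lemma1}) and $F_{\tilde\eta}\asymp n\,F_{\tilde\eta}^{*,n}$, which is what makes the $d=2$ arithmetic balance.
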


\begin{proof}  We will write $\eta' = \eta \oplus \eta^* \oplus
\tilde \eta$. 
Let $y$ be the terminal point of $\eta$, and let
$w,z$ be the initial and terminal points of $\eta'$, respectively.
Let $A = \Z^d \setminus (\eta \cup \tilde \eta)$, and note
that
\[   F_{\eta \oplus \eta^* \oplus
\tilde \eta } = F_{\eta} \, F_{\tilde \eta}( \Z^d \setminus \eta) \,
F_{\eta^*} (A) . \]
Since $\eta \subset C_n \cup \p C_n$ and $\tilde \eta\subset (\Z^d
\setminus  C_{n+1} ) \cup 
\p_i C_{n+1}$, it follows from Lemma \ref{june6.lemma1} that
$  F_{\tilde \eta} \asymp F_{\tilde \eta}( \Z^d \setminus \eta)$
for $d \geq 3$, and by Lemma \ref{sep14.lemma1}  we see that 
$ F_{\tilde \eta}^{*,n} 
\asymp F_{\tilde \eta} ( \Z^d \setminus \eta)$ for $d=2$.
Therefore the sum on the left-hand side of \eqref{jun24.2}
is comparable to
\begin{equation}  \label{jun12.6}
F_{\eta} \,F_{\tilde \eta} (2d)^{-|\eta|-|\tilde \eta|}
\sum_{\eta^*} (2d)^{-|\eta^*|} \,  F_{ \eta^* } (A)\,
\Es_{ \eta'}(z) , \;\;\;\; d \geq 3,
\end{equation}
and similarly for $d=2$ with $F_{\tilde \eta}$ replaced
with $F_{\tilde \eta}^{*,n}$.

For an upper bound, we use   $\Es_{ \eta'}(z) \leq \Es_{\tilde \eta}(z)$
to bound the sum by
\[  \Es_{\tilde \eta}(z) \sum_{\eta^*} (2d)^{-|\eta^*|} \,  F_{\eta^*} (A),\]
where the sum is over all SAWs from $y$ to $w$ and otherwise in
$A$.  The sum therefore equals 
$ H_{\p A}(y,w) .$

For the lower bound we restrict the sum in \eqref{jun12.6}
to $\eta^*$ such that
$\eta^* \subset C_{n + \frac 32}$.  
In that case, we use Corollary  \ref{cor1}
or Corollary \ref{cor1.2d} to tell us that
$\Es_{\eta'}(z) \asymp \Es_{\tilde \eta}(z)$ and hence the
quantity in \eqref{jun12.6} is bounded below by a constant
times
\[ 
F_{\eta_n} \,F_{\tilde \eta} (2d)^{-|\eta_n|-|\tilde \eta|}\,
\Es_{\tilde \eta}(z)
\sum_{\eta^*\subset C_{n + \frac 32}} (2d)^{-|\eta^*|} \,  F_{ \eta^* } (A) 
.
\] 
We also use the results of that section to tell us that
\[  \sum_{\eta^*} (2d)^{-|\eta^*|} \,  F_{ \eta^* 
} (A) 
\geq c \, H_{\p A}(y,w).\]
Therefore, using Corollary \ref{cor3}, we see that for $d \geq 3$,
\begin{eqnarray*}
\sum_{\eta^*} \mu_m(\eta \oplus \eta^* \oplus
\tilde  \eta )  & \asymp &  F_{\eta} \,F_{\tilde \eta} (2d)^{-|\eta|-|\tilde \eta|}
\, 
H_{\p A}(y,w)\\
& \asymp & F_{\eta} \,\Es_\eta(y) \,(2d)^{-|\eta|}
\, F_{\tilde \eta} (2d)^{- |\tilde \eta|}
\,  H_{\Z^d \setminus \tilde \eta}(0,w),
\end{eqnarray*}
and similarly for $d=2$ with $F_{\tilde \eta}$ replaced
with $F_{\tilde \eta}^{*,n}$.  If $d \geq 3$, then
$ H_{\Z^d \setminus \tilde \eta}(0,w) \asymp
H_{\p(\hat \Z^d \setminus \tilde \eta)}(0,w).$

We now claim that for $d=2$, $F_\eta \asymp n \, F_{\tilde \eta}^{*,n}$, in
other words, the measure of loops that intersect $\eta$ and also disconnect
$0$ from $\p C_n$ equals $\log n + O(1)$.  Indeed, this follows from
Lemma \ref{sep14.lemma1}.  We therefore get
\[   F_{\tilde \eta}^{*,n} \,  H_{\Z^d \setminus \tilde \eta}(0,w)
\asymp n^{-1} \, 
 F_{\tilde \eta}  \,  H_{\Z^d \setminus \tilde \eta}(0,w)
  \asymp F_{\tilde \eta} \, H_{\p(\hat \Z^d \setminus \tilde \eta)}(0,w).\]
Hence, for all $d \geq 2$,
\[  \sum_{\eta^*} \mu_m(\eta \oplus \eta^* \oplus
\tilde  \eta )  \asymp 
F_{\eta} \,\Es_\eta(y) \,(2d)^{-|\eta|}
\, F_{\tilde \eta} (2d)^{- |\tilde \eta|}
\,  H_{\p(\hat \Z^d \setminus \tilde \eta)}(0,w)
 \asymp \mu_n(\eta) \, \mu_{n,m}(\tilde \eta).\]

\end{proof}

It is useful to view the measures $\mu_n$
as generating a Markov chain $\gamma_n$  with state space
\[                  \saws := \bigcup_{n=0}^\infty  \saws_n. \]
The transitions always go from $\saws_{n}$ to $\saws_{n+1}$,
and are such that $\gamma_n \prec \gamma_{n+1}$.
Using  \eqref{jun3.1} we give the transitions by 
\begin{equation}  \label{mutransition}
\phi(\gamma_n,\gamma_{n+1}) := \frac{ \mu_n(\gamma_{n+1})}
{\mu_n(\gamma_{n})} = (2d)^{-|\tilde \gamma|} F_{\tilde\gamma}
(\Z^d \setminus \gamma_{n}) \, \frac{\Es_{\gamma_{n+1}}(z_{n+1})}{
\Es_{\gamma_n}(z_n)}.
\end{equation}
Here $z_n,z_{n+1}$ are the terminal points of $\gamma_n,\gamma_{n+1}$,
respectively,
and we have written $\gamma_{n+1} = \gamma_n \oplus \tilde \gamma$.

%
%

\subsection{Coupling a one-sided LERW}  \label{onecouplesec}

Before handling the case of pairs of walks, it is useful to consider the simpler
question of coupling one-sided infinite LERW with different initial conditions.
The one-sided LERW is a probability measure on $\bar \saws_0$.  For each $n$, we
write $\eta \in \bar \saws_0$ uniquely as $\eta = \eta_n \oplus \eta_n^*$ where
$\eta_n \in \saws_n$.  We will write $\eta \sim_k \tilde \eta$ if in this decomposition
 $\eta_k^* = \tilde \eta_k^*$, that is, if the paths agree after their first
visit to $\p C_k$.  We do not require that $\eta_k$ and $\tilde \eta_k$ have
the same number of steps.  If $\eta_n , \tilde \eta_n \in \saws_n$,
we write $\eta_n = _j \tilde \eta_n$ if the paths agree from the first visit
to $\partial C_{n-j}$ onward. 

\begin{proposition}  There exist  $0 < u , c< \infty $ such that if $\eta_n,\tilde \eta_n
\in \saws_n$, then we can couple $\eta, \tilde \eta$ on the same probability
space such that
\begin{itemize}
\item  The distribution of $\eta$ is  LERW  conditioned to start with $\eta_n$.
\item   The distribution of $\tilde \eta$ is LERW conditioned to start with
$\tilde \eta_n$. 
\item  If  $J$ denotes the smallest integer $k$ such that 
$\eta^* \sim_{n+k} \tilde \eta^*$, then  $\E[e^{ u J}]  \leq c $. 
 \end{itemize}
 \end{proposition}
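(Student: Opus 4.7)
The plan is a scale-by-scale coupling based on the Markov chain description \eqref{mutransition} of one-sided LERW. Both conditional LERWs are realized as Markov chains on $\bigcup_m \saws_m$ starting at $\gamma_n = \eta_n$ and $\tilde\gamma_n = \tilde\eta_n$, with transitions $\gamma_m \mapsto \gamma_{m+1}$ given by \eqref{mutransition}. I would build the two chains on a common probability space with a maximal (total-variation) coupling attempt at each scale $m \geq n$. Setting $J$ to be the smallest integer $k \geq 1$ such that every attempt at scale $m \geq n+k-1$ succeeds, the tails of $\eta$ and $\tilde\eta$ after their first visits to $\partial C_{n+k}$ will then agree pathwise, so $\eta \sim_{n+k} \tilde\eta$.

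The heart of the argument is a decay estimate for the total variation distance between the one-step transition kernels $P_m(\gamma_m, \cdot)$ and $P_m(\tilde\gamma_m, \cdot)$ when $\gamma_m, \tilde\gamma_m \in \saws_m$ differ only inside $C_n$: this distance should be $O(e^{-\alpha(m-n)})$ for some $\alpha > 0$. Examining \eqref{mutransition} factor by factor, the ratio $F_\xi(\Z^d \setminus \gamma_m)/F_\xi(\Z^d \setminus \tilde\gamma_m)$ for a candidate extension $\xi$ differs from $1$ only through loops meeting both $\xi$ (essentially in the annulus $C_{m+1} \setminus C_m$) and $C_n$; these loops are controlled by Lemma \ref{june6.lemma1} for $d \geq 3$ and by Lemma \ref{june6.lemma1.2d} together with Lemma \ref{sep14.lemma1} for $d = 2$, where the non-disconnecting restriction in the definition of $Q_m$ is essential. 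The escape-factor ratio $\Es_{\gamma_m \oplus \xi}(z)/\Es_{\gamma_m}(y)$ compared with its $\tilde\gamma$-analog is bounded by Corollary \ref{cor1} or \ref{cor1.2d} at cost $1 + O(e^{-(m-n)})$. The mismatched endpoints $y \ne \tilde y$ of $\gamma_m$ and $\tilde\gamma_m$ on $\partial C_m$ are dealt with through the simple-random-walk representation of Proposition \ref{basiclerw}: Corollary \ref{cor4.1} aligns the hitting densities on $\partial C_{m+1}$ up to $1 + O(e^{-(m-n)})$, and Corollary \ref{feb7.cor1} confines the coupled continuations to the exterior of $C_n$ with probability $1 - O(e^{-(m-n)})$.

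Given this estimate, the attempt at scale $m$ succeeds with probability at least $1 - c\, e^{-\alpha(m-n)}$, and a union bound gives
\[
\Prob(J > k) \;\leq\; \sum_{m \geq n+k-1} c\, e^{-\alpha(m-n)} \;=\; O(e^{-\alpha k}),
\]
which yields $\E[e^{uJ}] < \infty$ for every $u \in (0, \alpha)$. The principal obstacle is the endpoint-mismatch step in the kernel comparison: although $\gamma_m$ and $\tilde\gamma_m$ generally end at distinct points of $\partial C_m$, a single coupled extension must emerge that agrees pathwise from $\partial C_{m+1}$ outward. This is where the separation-lemma philosophy and the ``independence up to constants'' of Proposition \ref{indprop} come in, and where the $d = 2$ case demands simultaneous use of the Beurling estimate, the disconnection bound \eqref{disconnect}, and the truncated loop measure in place of the transience inputs available for $d \geq 3$.
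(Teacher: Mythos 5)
There is a genuine gap, and it lies in the step from the one-scale coupling estimate to the exponential moment bound. Your union bound
\[
\Prob(J > k) \leq \sum_{m \geq n+k-1} c\, e^{-\alpha(m-n)}
\]
requires the unconditional failure probability at scale $m$ to be $O(e^{-\alpha(m-n)})$. But the TV-distance estimate you derive holds only under the hypothesis that $\gamma_m$ and $\tilde\gamma_m$ differ exclusively inside $C_n$ (equivalently, that all earlier coupling attempts succeeded). If the maximal coupling fails at some scale $m_0$, the two chains thereafter differ at the boundary of $C_{m_0+1}$, and the TV distance at the next scale is $\Theta(1)$, not exponentially small. Worse, once the chains have diverged, the naive scale-by-scale maximal coupling gives no mechanism for them to ever re-agree, so $J$ as you define it could be infinite on the failure event; the bound $\E[e^{uJ}] < \infty$ simply does not follow.

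The paper closes exactly this gap with a regeneration argument. It separates two lemmas: (i) Lemma \ref{feb6.lemma1}, corresponding to what you do prove — if $\eta_n =_k \tilde\eta_n$, the continuations can be coupled to agree except with probability $O(e^{-k})$ — and (ii) Lemma \ref{feb7.lemma2}, which you do not reproduce: for \emph{arbitrary} $\eta_n,\tilde\eta_n \in \saws_n$ there is a uniform $\delta > 0$ and a coupling under which the continuations agree from $\partial C_{n+2}$ onward (with an added cone condition enabling cut-point constructions). This second lemma is the recovery mechanism. The paper then runs a block-scale process $K_m$ recording the current "length of agreement," with dynamics $K_{m+1} = K_m + 1$ on success and $K_{m+1} = 0$ on failure; Lemma \ref{feb6.lemma1} controls the failure probability when $K_m > 0$, and Lemma \ref{feb7.lemma2} guarantees a fixed chance of re-establishing agreement when $K_m = 0$. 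An abstract lemma about such integer-valued chains (the one about $X_n$ with $\Prob\{X_{n+1}=0 \mid \F_n\} \leq c\,e^{-\alpha X_n}$ and a uniform positive probability of never returning to $0$) then produces the exponential moment of $T = \max\{m: K_m=0\}$, hence of $J$. Your estimate at a single scale is essentially correct and matches Lemma \ref{feb6.lemma1}, but without Lemma \ref{feb7.lemma2} and the regeneration bookkeeping, the conclusion is out of reach.
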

 
We start with a preliminary lemma.

\begin{lemma} \label{feb6.lemma1}
 There exist $c'  < \infty$ such that if $\eta_n, 
\tilde \eta_n \in \saws_n$ with $\eta_n = _k \tilde \eta_n $ with
 then we can couple 
 $\eta, \tilde \eta$ on the same probability
space such that
\begin{itemize}
\item  The distribution of $\eta$ is  LERW  conditioned to start with $\eta_n$.
\item   The distribution of $\tilde \eta$ is LERW conditioned to start with
$\tilde \eta_n$.
\item  
$    \Prob\{\eta_n^* = \tilde \eta_n^*   \}  \geq 1 -
      c' \, e^{-k}.$
 \end{itemize}
 Moreover, if $k \geq 1$, then  $\Prob\{\eta_n^* = \tilde \eta_n^*   \} \geq 1/c$.
 \end{lemma}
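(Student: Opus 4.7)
The plan is to couple the two simple random walks whose loop-erasures produce $\eta_n^*$ and $\tilde\eta_n^*$. By Proposition~\ref{basiclerw}, given the initial segment $\eta_n$ (respectively $\tilde\eta_n$), the continuation $\eta_n^*$ is distributed as the loop-erasure of a simple random walk $\bar S^{(1)}$ started at the common terminal point $z \in \partial C_n$ and conditioned never to revisit $\eta_n$; similarly $\bar S^{(2)}$ for $\tilde\eta_n$. Since loop-erasure is a deterministic function of the path, it suffices to couple $\bar S^{(1)}$ and $\bar S^{(2)}$ so that they are equal as paths with probability at least $1 - c'e^{-k}$.

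The key structural observation is that since $\eta_n \setminus C_{n-k} = \tilde\eta_n \setminus C_{n-k}$, any simple random walk trajectory from $z$ that never enters $C_{n-k}$ avoids $\eta_n$ if and only if it avoids $\tilde\eta_n$. Let $B$ denote the event that an unconditioned simple random walk from $z$ never enters $C_{n-k}$; on $B$ the two avoidance events coincide, so the conditional laws of $\bar S^{(1)}$ and $\bar S^{(2)}$ given $B$ are the same measure. A standard maximal coupling then produces $(\bar S^{(1)}, \bar S^{(2)})$ with the correct marginals and equal on an event of probability $\min(p_1,p_2)$, where $p_i$ is the probability that the $i$th conditioned walk lies in $B$: with probability $\min(p_1,p_2)$ draw a single sample from the common conditional law on $B$ and use it for both walks, and otherwise sample independently from the appropriate conditionals on $B^c$. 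Corollary~\ref{feb7.cor1} gives $p_1, p_2 \geq 1 - ce^{-k}$, whence $\Prob(\eta_n^* = \tilde\eta_n^*) \geq 1 - c'e^{-k}$.

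For the uniform lower bound when $k \geq 1$, the main bound gives $\geq 1 - c'/e$, which is immediately positive whenever $c' < e$. In general, fix $k_0$ with $c'e^{-k_0} \leq 1/2$ so that the main bound already gives $\Prob \geq 1/2$ for all $k \geq k_0$. For $1 \leq k < k_0$, a separate direct construction, using the random walk estimates of Section~\ref{rwlemmasec} together with a variant of Corollary~\ref{extendcor}, yields a uniform positive probability that both conditioned walks agree on a prescribed initial excursion from $z$ reaching $\partial C_{n+k_0}$ while staying outside $C_{n-1}$; conditioned on this initial segment, the walk from the new starting point in $\partial C_{n+k_0}$ is in the regime $k + k_0 \geq k_0$ relative to the same sets $\eta_n,\tilde\eta_n$, and the main coupling again applies with success probability $\geq 1/2$. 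The main obstacle is essentially bookkeeping: ensuring that all constants are uniform in $n$ and in the SAWs $\eta_n,\tilde\eta_n$, which is precisely the content that Corollary~\ref{feb7.cor1} and the escape estimates of Section~\ref{rwlemmasec} were designed to deliver.
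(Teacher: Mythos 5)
Your proof is correct and follows essentially the same route as the paper: both reduce to coupling the two simple random walks (started at the common terminal point $z$, conditioned to avoid $\eta_n$ respectively $\tilde\eta_n$) whose loop erasures produce $\eta_n^*$ and $\tilde\eta_n^*$, and both hinge on the $O(e^{-k})$ bound for a conditioned walk re-entering $C_{n-k}$ — the paper cites Corollaries~\ref{cor1} and \ref{cor1.2d} while you invoke Corollary~\ref{feb7.cor1}, which the paper explicitly extracts from those same proofs, so there is no real difference. One small imprecision: in your explicit maximal-coupling construction, if $p_1 \ne p_2$ the ``otherwise'' branch should sample walk $i$ from $(\nu_i - \min(p_1,p_2)\,\nu)/(1-\min(p_1,p_2))$ rather than from the conditional law on $B^c$ (otherwise the walk with larger $p_i$ gets too little mass on $B$), but the claimed success probability $\min(p_1,p_2)\ge 1-c'e^{-k}$ is the right total-variation bound and the conclusion stands.
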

  
\begin{proof}  We assume $k \geq 1$, 
The distribution of $\eta_n^*, \tilde \eta_n^*$ given $\eta_n, \tilde \eta_n $,
is that of the loop erasure of a random walk starting at the endpoint 
conditioned to avoid  $\eta_n , \tilde \eta_n $, respectively.  Lemmas \ref{cor1} and \ref{cor1.2d}
show that we can couple these conditioned
random walks so that they agree up to an event
of probability $O(e^{-k})$. 

\end{proof}

\begin{lemma} \label{feb7.lemma2}
 There exists $ \delta > 0$ such that if
$\eta_n, \tilde \eta_n \in \saws_n$, then we can define
$\eta,\tilde \eta$ on the same probability space so that
\begin{itemize}
\item  The distribution of $\eta$ is  LERW  conditioned to start with $\eta_n$.
\item   The distribution of $\tilde \eta$ is LERW conditioned to start with
$\tilde \eta_n$.
\item With  probability
at least $\delta$, 
 \[  \eta_{n+2}^* = \tilde \eta_{n+2}^*, \;\;\;\;
  \eta_{n+2}^* \setminus \eta_{n+j}^* \subset
    \{\x
      \in\Z^d \setminus C_{n+1} : x_1 \geq |\x|/10\}  
.\] 
\end{itemize}
\end{lemma}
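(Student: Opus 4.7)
The strategy is to exhibit a positive-probability event that forces both LERWs to exit $C_{n+2}$ at a common point via paths in a designated cone, after which their continuations can be coupled using the estimates of Section \ref{rwlemmasec}. Fix once and for all a target point $w \in \partial C_{n+2}$ close to $\e_{n+2}$ and a short SAW $\alpha$ from some $v \in \partial C_{n+1}$ to $w$ lying entirely in the cone $\{x : x_1 \geq |x|/5\}$, separated by at least $e^{n-3}$ from the complement of the larger cone $\{x : x_1 \geq |x|/10\}$. By Proposition \ref{basiclerw}, the continuation of $\eta$ (resp.\ $\tilde\eta$) past its initial segment $\eta_n$ (resp.\ $\tilde\eta_n$) is distributed as the loop-erasure of a simple random walk $S$ (resp.\ $\tilde S$) started at the endpoint $y$ (resp.\ $\tilde y$) of $\eta_n$ (resp.\ $\tilde\eta_n$) and conditioned never to revisit $\eta_n$ (resp.\ $\tilde\eta_n$).

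Next I would show that $S$ satisfies, with probability bounded below uniformly in $y$ and $\eta_n$, an event $E$ consisting of: $(i)$ the first visit of $S$ to $\partial C_{n+1}$ falls within distance $e^{n-3}$ of $v$; $(ii)$ from there the walk enters the cone $\{x:x_1 \geq |x|/10\}$, reaches $v$, and traces $\alpha$ exactly to $w$, the entire sub-path from first hitting $\partial C_{n+1}$ to $w$ lying in the cone and not re-entering $C_{n+1}$ except at $v$; $(iii)$ once at $w$, $S$ never re-enters $C_{n+1}$. Part $(i)$, a uniform hitting bound on $\partial C_{n+1}$, is the key input: in $d \geq 3$ it follows from transience together with standard Harnack inequalities for the conditioned walk, while in $d = 2$ the disconnection bound \eqref{disconnect}, the Beurling estimate, Lemma \ref{lemma.sep8.0}, and Lemma \ref{lemma.sep8} combine to give a uniform lower bound on the harmonic measure of any fixed disk on $\partial C_{n+1}$ for the conditioned walk. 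Parts $(ii)$ and $(iii)$ follow from Lemma \ref{importantlemma}, Corollary \ref{extendcor}, and Corollary \ref{feb7.cor1}. The event $\tilde E$ for $\tilde S$ is defined analogously, and the two events are realized independently on a product space.

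On $E \cap \tilde E$, Proposition \ref{looperasefact} guarantees that both initial segments $\eta_{n+2}$ and $\tilde\eta_{n+2}$ end with the common tail $\alpha$ and that their portions outside $C_{n+1}$ lie in the required cone; moreover, the two avoidance sets $\eta_{n+2}$ and $\tilde\eta_{n+2}$ agree on the disk of radius $e^{n-4}$ about $w$ (since only $\alpha$ enters that disk). Corollaries \ref{cor1} and \ref{cor1.2d}, combined in the spirit of Lemma \ref{feb6.lemma1}, then show that the conditioned random walks producing the continuations $\eta_{n+2}^*$ and $\tilde\eta_{n+2}^*$ can be coupled to coincide with probability at least $1/c$. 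Multiplying the uniform lower bounds on $\Prob(E)$, $\Prob(\tilde E)$, and the final coupling yields the constant $\delta > 0$.

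The main obstacle is part $(i)$: when $y$ lies nearly antipodally to $w$ on $\partial C_n$, one must show that the conditioned walk still has uniformly positive probability of first hitting $\partial C_{n+1}$ in the prescribed window. In $d \geq 3$ the conditioned walk decouples from $\eta_n$ quickly and the estimate reduces to a standard potential-theoretic bound for unconditioned walk; in $d = 2$ the interaction with $\eta_n$ is strong, and the argument requires a careful combination of the Beurling estimate, the disconnection estimate, and the Harnack-type comparison implicit in Lemmas \ref{lemma.sep8.0} and \ref{lemma.sep8}.
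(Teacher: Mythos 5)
The step that sinks the argument is requiring each conditioned random walk to ``trace $\alpha$ exactly'' from $v \in \partial C_{n+1}$ to $w \in \partial C_{n+2}$. Since $C_{n+1}$ and $C_{n+2}$ are at radii $e^{n+1}$ and $e^{n+2}$, any SAW $\alpha$ joining them has length at least of order $e^{n+1}$, and the probability that a simple random walk (conditioned or not) follows a fixed nearest-neighbor path of that length is at most $(2d)^{-c\,e^{n+1}}$, which tends to zero super-exponentially rather than being bounded below. Moreover, the two events $E$ and $\tilde E$ are realized on a product space, so you are asking \emph{both} walks to independently perform this exponentially unlikely feat. Nothing in Lemma \ref{importantlemma}, Corollary \ref{extendcor}, or Corollary \ref{feb7.cor1} provides an estimate of this kind; those results give lower bounds for soft events like ``the walk stays in a cone while reaching the next scale,'' not for hitting an exact trajectory.

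What the paper actually does is quite different and worth contrasting. Rather than trying to force both loop-erasures to contain a prescribed segment, it couples the \emph{underlying random walks themselves}: after each conditioned walk reaches $\partial C_{n+1}$ (near $\e_{n+1}$, with positive uniform probability, via the hitting estimates you correctly identify as part (i)), one conditions further on the event that the walk never returns to $C_n$; on that positive-probability event the future increment has the \emph{same} law for both walks, namely random walk conditioned to avoid $C_n$, so the two walks can be made to literally coincide from that point onward. To conclude that the loop-erasures also coincide past $\partial C_{n+2}$, the paper then exploits the existence of a \emph{cut time} for the coupled walk in the annulus $C_{n+(5/3)}\setminus C_{n+(4/3)}$ (cut times exist for random walk in $\Z^2,\Z^3$ with positive probability; see \cite{rwcutpoint}), together with the no-return conditions. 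A cut time for the random walk guarantees, via Proposition \ref{looperasefact}, that the loop erasure after the cut time depends only on the post-cut-time walk, which is shared. That is the key mechanism your proposal is missing: cut times let you match loop erasures without prescribing them. If you replace the ``trace $\alpha$ exactly'' step with coupling the walks and invoking a cut time, the rest of your outline (the harmonic-measure lower bound on $\partial C_{n+1}$, the cone confinement, the no-return events, and the loop-measure control from Lemma \ref{june6.lemma1}) falls into place along the lines of the paper's argument.
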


\begin{proof}
We let $\omega, \tilde \omega$ denote
simple random walks conditioned to avoid $\eta_n, \tilde \eta_n$
respectively.  We first let $\omega, \tilde \omega$ move independently
until they reach $\partial C_{n+1}$.  For each one there is a positive
probability that the walk did not enter $C_{n-1}$ and that the endpoint
is within distance $e^{n+1}/20$ of $\e_{n+1}$.  The distribution of
the endpoint is comparable to harmonic measure, that is, comparable
to $e^{-n (1-d)}$ for each point. 

Given that $\omega,\tilde \omega$ have reached $\partial C_{n+1}$,
there is a universal $\rho > 0$ such that the probability (conditioned that
it avoids $\eta_n $ or $\tilde \eta_n$) that the rest
of the path avoids $C_n$ is greater than $\rho$.  We consider the set
of paths with this property, and we can now couple $\omega, \tilde \omega$
with positive probability such that on this event, the distribution of the
remainder of the path is random walk conditioned to avoid $C_{n}$. 
We will write $\omega^*$ for the future in the coupled walks.  This
is a walk starting on $\p C_{n+1}$ within distance
 $e^{n+1}/20$ of $\e_{n+1}$.  We write $\sigma_r$ for the first visit of
 $\omega^*$ to $\partial C_{n +r}.$

Consider the event that all the following
 holds.
 \begin{itemize}
 \item  the walk reaches
 $\partial C_{n+j +1}$ without leaving $ \{\x
      \in\Z^d \setminus C_{n+1} : x_1 \geq |\x|/10\}  \}$
      \item  After this time it never returns
      to $C_{n+j}$.
      \item  there is 
     a cut time for $\omega^*[\sigma_0, \sigma_2]$
     that occurs between time  $\sigma_{4/3}$ and $\sigma_{5/3}$.
     \item $\omega^*[\sigma_{4/3},\infty)$ never visits
      $\partial C_{n + 1}$
        \item $\omega^*[\sigma_2,\infty)$ never visits
          $\partial C_{n +  \frac 53}$

     \end{itemize}
     Under this event, the cut time is also a cut time for
     the entire path (with either $\eta_n$ or $\tilde \eta_n$
     as
  initial condition).  Hence the loop erasure is the same after
  that point, and, in particular, the loop erasure after the the
  first visit of the loop erasure to $\partial C_{n+2}$ is the same.
 All we need is that the probability of this event is bigger than some
 $\epsilon_j > 0$ and this is easy to verify.  (We could get a lower
 bound for the probability in terms of $j$ but we will not need it.)

\end{proof}  

We can now describe the coupling.  Let $r$ be sufficiently large
so that $c'\sum_{k \geq r-2} \leq 1/2$.  Let 
 \[   q_i = \sum_{k=ir}^{(i+1)r - 1} c' \, e^{-k}, \]
 and note that 
 \[   q_i \leq c'' e^{-ir} , \;\;\;\;\;  \sum_{i=1}^\infty q_i \leq \frac 12. \] We start with $(\eta_n,\tilde \eta_n)$
and we will recursively define $\gamma_m = \eta_{n + mr}, \tilde \gamma_m = \tilde \eta_{n + mr} $ 
 for $m=0,1,\ldots$
and a nonnegative integer valued random variable $K_m$.
  At each
stage we will have
%
%
%
 \begin{itemize}
\item  $ \gamma_{m-1}  \prec \gamma_m, \;\;\;\;\;  \tilde \gamma_{m-1} \prec 
 \tilde \gamma_m ,$
\item $\gamma_m$ ($\tilde \gamma_m$)
 has the distribution of a LERW conditioned to start with $\eta_n$ (resp., $\tilde 
 \eta_n$)
stopped at its first visit to $\partial  C_{n + mr}$.
\item  If $K_m = k$, then $\gamma_m =_{kr -2}  \tilde \gamma_m.$
\end{itemize} 
Using the lemmas above, we can couple so that the following is true
given $\gamma_m,\tilde \gamma_m$. 
\begin{itemize}
\item  If $K_m = k$, we can define $\gamma_{m+1}, \tilde \gamma_{m+1}$
such that, except for an event of probability at most
$q_k$, 
  $\gamma_{m+1} =_{r(k+1) - 2} \tilde \gamma_{m+1}$.  If the
last equality holds, 
  we set $K_{m+1} = k + 1$.  Otherwise, we set $K_m = 0$. 
\item  If $K_m = 0$, then  we can
define $\gamma_m, \tilde \gamma_m $ so that with probability at least
$\delta$, $\gamma_m =_{r-2} \tilde \gamma_m$.  On the event that this happens,
we set $K_{m+1} = 1$.  Otherwise, we set $K_{m+1} = 0$. 
\end{itemize}
Let $T = \sup\{m: K_m = 0\}$.  Note that $J \leq rT$,
  Then these assumptions imply (see following lemma)
that there exists $u = r \beta> 0$ with $\E[e^{uJ}]
   \leq  \E[e^{\beta T}] < \infty$.

\begin{lemma}
Suppose $X_0 = 0,X_1,X_2,\ldots$ is a sequence of nonnegative integer random
variables adapted to a filtration $\{\F_n\}$
 such that for each $n$, $X_{n+1} = X_n + 1$ or $X_{n+1} = 0$. 
Suppose there exists $0 < \delta , c, \alpha < \infty$ such that for all $n,j$
\[      \Prob\{X_{m} > 0 \mbox{ for all  } m > n   \mid \F_n\} \geq \delta, \]
\[       \Prob\{X_{n+1} = 0 \mid \F_n\} \leq c \, e^{-\alpha X_n}. \]
Let $T = \max\{n: X_n = 0\}$.  Then
there exists $\beta = \beta(\delta,c,\alpha) $ such that
$\E[e^{\beta T}] < \infty.$  
\end{lemma}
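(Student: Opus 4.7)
The plan is to decompose $T$ into excursions away from $0$ and control both the length and the number of excursions.

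First, I would set $\tau_0 = 0$ and, recursively, $\tau_k = \inf\{n > \tau_{k-1} : X_n = 0\}$ with the usual convention $\inf \emptyset = \infty$. Put $A_k = \{\tau_k < \infty\}$ and let $L_k = \tau_k - \tau_{k-1}$ be the $k$-th excursion length. On $A_{k-1}$ we have $X_{\tau_{k-1}} = 0$, so $X_{\tau_{k-1}+i}$ is forced to equal $i$ as long as we haven't reset to $0$. Using the second hypothesis at time $\tau_{k-1}+j-1$ together with the tower property, I would show
\[
  \Prob\{L_k = j \mid \F_{\tau_{k-1}}\} \le c\, e^{-\alpha(j-1)}, \qquad j \ge 1.
\]
The first hypothesis, applied at time $n = \tau_{k-1}$, also gives the total-mass bound $\Prob\{A_k \mid \F_{\tau_{k-1}}\} = \sum_{j\ge 1}\Prob\{L_k = j \mid \F_{\tau_{k-1}}\} \le 1-\delta$.

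The key step is to combine these two bounds to find $\beta > 0$ (depending only on $\delta, c, \alpha$) such that
\[
  \E\!\left[e^{\beta L_k}\,1_{A_k} \mid \F_{\tau_{k-1}}\right] \;\le\; 1 - \tfrac{\delta}{2}.
\]
I would do this by splitting the sum at some threshold $J$: for $j \le J$ use $\sum_{j\le J} e^{\beta j}\Prob\{L_k=j\mid\F_{\tau_{k-1}}\} \le e^{\beta J}(1-\delta)$, and for $j > J$ use the tail bound $\sum_{j>J} e^{\beta j}\, c e^{-\alpha(j-1)}$, which is a convergent geometric series when $\beta < \alpha$. Choose $J$ large enough that the tail piece is at most $\delta/4$, then choose $\beta > 0$ small enough that $e^{\beta J}(1-\delta) \le 1 - 3\delta/4$; the combined bound is $1 - \delta/2$.

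Iterating this conditional estimate gives, by the tower property,
\[
  \E\!\left[e^{\beta \tau_k}\,1_{A_k}\right] \;\le\; \bigl(1-\tfrac{\delta}{2}\bigr)^{k},
\]
since $\tau_k = \tau_{k-1} + L_k$ and $1_{A_k} = 1_{A_{k-1}} 1_{\{L_k<\infty\}}$. Finally, writing $N = \sup\{k : A_k\}$ (finite a.s.\ because the first hypothesis makes $N$ stochastically dominated by a Geometric($\delta$)) and noting that $T = \tau_N$, I would sum
\[
  \E[e^{\beta T}] = \sum_{k=0}^{\infty} \E\!\left[e^{\beta \tau_k} 1\{A_k, A_{k+1}^c\}\right]
  \;\le\; \sum_{k=0}^\infty \bigl(1-\tfrac{\delta}{2}\bigr)^{k} = \frac{2}{\delta} < \infty.
\]
I expect the main obstacle to be purely bookkeeping: the bound $\Prob\{L_k=1\}\le c$ from hypothesis 2 is useless when $c \ge 1$, so one really does need the supplementary total-mass bound $1-\delta$ from hypothesis 1 to offset the first few terms, and it is the two-part split that makes the constant work out uniformly.
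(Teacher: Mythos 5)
Your proof is correct and is essentially the paper's own argument: you both define the successive return times to $0$, bound the conditional excursion-length tail by $c\,e^{-\alpha(j-1)}$, combine this with the $1-\delta$ bound on the probability of ever returning to obtain $\E[e^{\beta L_k}\,1_{A_k}\mid\F_{\tau_{k-1}}]\le 1-\delta/2$, iterate, and sum the geometric series. The only difference is cosmetic: you spell out the choice of $\beta$ via an explicit threshold split, whereas the paper simply asserts that the bound holds ``for $\beta$ sufficiently small.''
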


\begin{proof}
Let $\sigma_0 = 0$
and $\sigma_k = \min\{n < \sigma_{k-1}: X_n = 0 \}$.  Then $\Prob\{\sigma_1 <  \infty\}
\leq 1- \delta$ and by iterating $ \Prob\{\sigma_k <  \infty\}
\leq (1- \delta)^k$.  Hence $\Prob\{T < \infty\} = 1$ and we can write
\[   \E[e^{\beta T}] =  \sum_{k=0}^\infty \E\left[e^{\beta \sigma_k};
\sigma_k < \infty, \sigma_{k+1} = \infty \right] \leq \sum_{k=0}^\infty  \E\left[e^{\beta \sigma_k};
 \sigma_k < \infty  \right].\]  Since
 $\Prob\{\sigma_1 = n\}  \leq \Prob\{\sigma_1 = n \mid \sigma_1
 > n-1\} \leq c \,e^{-\alpha (n-1)},$
  for $\beta$ sufficiently small, 
\[    \E\left[e^{\beta \, \sigma_1}; \sigma_1 < \infty \right]  \leq  1- 
  \frac \delta 2.\]
By iterating this, we see for $k \geq 1$,  
$ \E\left[e^{\beta \sigma_k} ; \sigma_k < \infty\right] \leq (1-\frac \delta 2)^k . $
\end{proof}

\subsection{Pairs of walks}  \label{pairsec}

If $n  < m-1$, and $\eta  \in \saws_m$, we  define $\eta_n, \eta^*,
\eta_{n+1,m}$
by the decomposition \eqref{decomp2},
\[     \eta  = \eta_n \oplus \eta^*  \oplus   \eta_{n+1,m}, \]
where $\eta_n \in \saws_n,  \eta_{n+1,m} \in \saws_{n+1,m}$ and
$\eta^* $ is the middle.  
If $\bfeta = (\eta^1,\eta^2)
\in \saws_m^2$, we similarly
write 
\[      \bfeta  = \bfeta_n \oplus \bfeta^*  \oplus \bfeta_{n+1,m} \]
where the decomposition is done separately on $\eta^1,\eta^2$.  
Proposition \ref{indprop} implies that if $m \geq n+2$,
\begin{equation}  \label{compmeasure}
\sum_{\bfeta = \bfeta_n
\oplus \bfeta^*\oplus \bfeta_{n+1,m}} \mu_m(\bfeta)
\asymp 
\mu_n(\bfeta_n) \, \mu_{n+1,m}(\bfeta_{n+1,m}), 
\end{equation}

\begin{itemize}

\item Recall that $Q_n  $ is defined
on $\saws_n^2 $ by
\[   Q_n(\bfeta) = 1\{\bfeta \in \pairs_n\} \,
        e^{-L_n(\bfeta)} = e^{-L_n(\bfeta)},\] 
       where $L_n(\bfeta)$ is the loop measure of loops in $\hat C_n$
that intersect both $\eta^1$ and $\eta^2$.  If
$d=2$, we only consider loops that do not disconnect
$0$ from $\p C_n$.  By definition, $L_n(\bfeta)
 = -\infty$ if $\eta^1 \cap \eta^2 \neq \{0\}$.
If $n \leq m-1$, we also view $Q_n$ as defined on $\saws_m^2$ by
\[    Q_n(\bfeta_n
\oplus \bfeta^*\oplus \bfeta_{n+1,m} ) = Q_n(\bfeta_n).\]


\item We define   $\bar Q_{n+1,m}$ on $\saws_m^2$    by
\[    \bar Q_{n+1,m}(\bfeta_n
 \oplus \bfeta^*\oplus \bfeta_{n+1,m} ) =
        e^{-L_{m}(\bfeta_{n+1,m}  )}, \]
where
\begin{itemize}
\item $ L_{m}(\bfeta_{n+1,m} ) = -\infty$
if $\eta_{n+1,m}^1 \cap \eta_{n+1,m} ^2 
\neq \eset$,
 \end{itemize}
 and
\begin{itemize}
\item   ($d\geq 3$) 
$L_{m}(\bfeta_{n+1,m})$ is the loop measure of loops in $\hat C_m$ that  
intersect both $\eta^1_{n+1,m}$ and $\eta^2_{n+1,m}$,
\item($d = 2$)  
$L_{m}(\bfeta_{n+1,m})$ is the loop measure of loops in $\hat C_m$ that  
intersect both $\eta^1_{n+1,m}$ and $\eta^2_{n+1,m}$ and do not disconnect $0$ from $\p C_m$. 
\end{itemize}



\item If $\bfeta_n \prec \bfeta$, we define
\[         \lambda_m(\bfeta\mid \bfeta_n) = \frac{\lambda_m(\bfeta)}
{\lambda_n(\bfeta_n)} =  \frac{Q_m(\bfeta)\, \mu_m(\bfeta)}
   {Q_n(\bfeta_n) \, \mu_n(\bfeta_n)} \leq
   \frac{ \mu_m(\bfeta)}
   { \mu_n(\bfeta_n)}=
    \mu_m
   (\bfeta\mid \bfeta_n), \]
so that
\[    \lambda_m(\bfeta) = \lambda_n(\bfeta_n) \, \lambda_m(\bfeta
 \mid \bfeta_n). \]
If $ n < m$ and $\bfeta \in \pairs_n$,  let
\[  \lambda_m(\bfeta)
= \sum_{\bfeta \prec \bfeta' \in \pairs_m}
   \lambda_m(\bfeta') = \lambda_n(\bfeta)
    \sum_{\bfeta \prec \bfeta' \in \pairs_m}
   \lambda_m(\bfeta' \mid \bfeta) .\]

 \item  Let $\seppair_{n,m}$  be the set of $\bfeta = (\eta^1,\eta^2)
\in \pairs_m $ with
\[      \bfeta^* \subset C_{n+2} \setminus C_{n-1},  \]
\[     \dist\left[\eta^{j,*} , \eta^{3-j}\right]  \geq e^{n-3}, \;\;\;\;
j=1,2.\]

\end{itemize}  

\begin{proposition} There exist $0 < c_1 < c_2 < \infty$ such
that if  $n < m-1$ and $\bfeta \in \pairs_n$,
\begin{equation}  \label{qsub}    c_1 \, 1\{\bfeta \in \seppair_{n,m}\} \, {Q_n(\bfeta)}\,
\bar Q_{n+1,m}(\bfeta)\, \leq
       {Q_m(\bfeta)}
               \leq c_2\,  {Q_n(\bfeta)}\, \bar Q_{n+1,m}(\bfeta) . 
               \end{equation}
 \end{proposition}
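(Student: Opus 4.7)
The plan is to compare the family of loops underlying $L_m(\bfeta)$ with the families underlying $L_n(\bfeta_n)$ and $L_m(\bfeta_{n+1,m})$, and reduce everything to loop-measure estimates already proved in Lemmas \ref{june6.lemma1} and \ref{june6.lemma1.2d}. The upper bound is immediate: every loop counted by $L_n(\bfeta_n)$ lies in $\hat C_n \subset \hat C_m$ and meets both $\eta^j$ through $\eta^j_n$, and every loop counted by $L_m(\bfeta_{n+1,m})$ lies in $\hat C_m$ and meets both $\eta^j$ through $\eta^j_{n+1,m}$, so both families contribute to $L_m(\bfeta)$. The two families are disjoint, since the first lies in $\hat C_n$ while the second must hit vertices of $\bfeta_{n+1,m}$ lying outside $C_n$. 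For $d=2$ the non-disconnection condition transfers from $\partial C_n$ to $\partial C_m$, because a loop inside $C_n$ that does not encircle $0$ in $C_n$ cannot encircle it in $C_m$. Hence $L_m(\bfeta) \geq L_n(\bfeta_n) + L_m(\bfeta_{n+1,m})$, which gives the upper bound of \eqref{qsub} with $c_2 = 1$.

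For the lower bound, I must show that the excess
\[
\Delta(\bfeta) := L_m(\bfeta) - L_n(\bfeta_n) - L_m(\bfeta_{n+1,m}) \geq 0
\]
is bounded by a universal constant $C$ on $\seppair_{n,m}$; then $c_1 = e^{-C}$ works. The excess is the loop measure of the ``cross'' loops, those in the support of $L_m(\bfeta)$ that are in neither diagonal family. I will group them as (E1) loops crossing $\partial C_n$, (E2) loops crossing $\partial C_{n+1}$ but not $\partial C_n$, and (E3) loops lying in $\hat C_n$, in $C_{n+1} \setminus C_n$, or in $\hat C_m \setminus C_{n+1}$ that meet $\bfeta^*$. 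A quick case-check verifies every excess loop lies in one of these classes: a loop that avoids both $\partial C_n$ and $\partial C_{n+1}$ is confined to one of the three regions appearing in (E3), and in each such region a loop avoiding $\bfeta^*$ can only see $\bfeta_n$ or only $\bfeta_{n+1,m}$ and is therefore already in a diagonal family.

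Classes (E1) and (E2) are controlled directly by Lemma \ref{june6.lemma1} for $d \geq 3$ and Lemma \ref{june6.lemma1.2d} for $d=2$: specialized to $r=0$, each gives a universal bound on the loop measure of loops crossing $\partial C_n$ or $\partial C_{n+1}$, and in the planar case the non-disconnection hypothesis is consistent with the loops contributing to $L_m(\bfeta)$. The hard step, and the place where the separation defining $\seppair_{n,m}$ is essential, is (E3). Under $\seppair_{n,m}$ we have $\bfeta^* \subset C_{n+2} \setminus C_{n-1}$ and $\dist(\eta^{j,*}, \eta^{3-j}) \geq e^{n-3}$, so any loop in (E3) meets the fixed annulus $C_{n+2} \setminus C_{n-1}$ and has diameter at least $e^{n-3}$. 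I would estimate the measure of such loops via Lemma \ref{sep15.lemma1}: for each anchor $x$ in the annulus, bound the probability that the loop soup contains a loop through $x$ of diameter $\geq e^{n-3}$ by the product of escape-and-return probabilities of a simple random walk from $x$, then sum over $x$. For $d \geq 3$ this is a direct Green-function computation structurally identical to the proof of Lemma \ref{june6.lemma1} and produces a uniform constant. For $d=2$ the sum is logarithmically delicate because individual loops through $x$ of large diameter contribute non-negligibly, but the non-disconnection restriction supplies the decaying factor exploited in the proof of Lemma \ref{june6.lemma1.2d}, and a parallel summation yields a universal bound. This is the step I expect will require the most care.
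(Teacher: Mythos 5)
Your proposal is correct and follows the same basic strategy as the paper's proof: decompose the loop families underlying $L_n(\bfeta_n)$, $L_m(\bfeta_{n+1,m})$, and $L_m(\bfeta)$, show the first two count disjoint subsets of the third, and bound the excess by a universal constant using the loop-measure estimates of Section \ref{loopsec}. Two features of your write-up are actually cleaner than the paper's version. First, your observation that the two diagonal families are disjoint (loops in $\hat C_n$ cannot meet $\bfeta_{n+1,m}$) gives the upper bound with $c_2 = 1$; the paper, by contrast, allows an unnecessary multiplicative factor $e^{L'}$. Second, your explicit split of the excess into (E1)/(E2)/(E3) makes a point the paper glosses over: the paper asserts that every excess loop intersects $C_{n+2}\setminus C_{n-1}$ and has diameter at least $e^{n-3}$, but an excess loop in class (E1) that meets both $\eta^1_n$ and $\eta^2_n$ at nearby interior points and just barely pokes outside $C_n$ need not have large diameter. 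Your approach handles (E1) and (E2) directly with the crossing estimate and reserves the diameter bound only for (E3), where it genuinely follows from the separation hypothesis. This is the cleaner organization.

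Two small points you should tighten. For (E1) and (E2), Lemma \ref{june6.lemma1} is stated as a probability bound and at $r=0$ that statement is vacuous; what you actually need is that the sum $\sum_j \Prob^{x_j}\{S^j[0,T^j]\cap B\neq\eset\}$ appearing in its proof (which, by Lemma \ref{sep15.lemma1}, bounds the \emph{loop measure}) is $O(1)$ for $r=0$. The proof indeed gives this, so the point is just one of citation. For (E3), you propose to rebuild the estimate from Lemma \ref{sep15.lemma1}; this is fine but unnecessary, since the paper already supplies exactly what you need: the unlabeled lemma after Lemma \ref{june6.lemma1.2d} gives the $d\geq 3$ bound for large-diameter loops near the annulus, and the second bullet of Lemma \ref{sep14.lemma1} gives the $d=2$ bound with the non-disconnection constraint already built in. Citing those directly turns the step you flagged as ``requiring the most care'' into a one-liner.
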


\begin{proof}  For the upper bound we note that
\[ \frac{Q_m(\bfeta)}
   {Q_n(\bfeta)\, \bar Q_{n+1,m}(\bfeta)} \leq \exp\{L'\} , \]
   where $L'$ is the 
  measure of loops $\ell$ that intersect both
  $\eta^1_n$ and $\eta^2_n$ and also intersect both  
 $\eta^1_{n+1,m}$ and $\eta^2_{n+1,m}$.  For $d=2$, it is
 also required that the loops not disconnect $0$ from $\p C_m$.
 In particular, such loops must intersect both $C_n$ and $\p C_{n+1}$.
 If $d \geq 3$, Lemma \ref{june6.lemma1} tells us that the
 measure of such loops is uniformly bounded.
Similarly, for  $d=2$, Lemma \ref{sep14.lemma1} tells us that
the measure of such nondisconnecting loops is bounded. 

For the lower bound, note that if $\bfeta \in \seppair_{n,m}$, then 
\[ \frac{Q_m(\bfeta)}
   {Q_n(\bfeta)\, \bar Q_{n+1,m}(\bfeta)} \geq \exp\{-L''\} , \]
   where $L''$ is the 
  measure of loops $\ell$  that intersect $C_{n+2} \setminus C_{n-1}$
  and are of diameter at least $e^{n-3}$; for $d=2$, we also require
  the loops to be nondisconnecting.  Again, Lemmas
  \ref{june6.lemma1} and \ref{sep14.lemma1} give uniform upper
  bounds for $L''$.
 \end{proof}

%
%
%

One of the most important tools in understanding $\lambda_n$
is the separation lemma.  This says the (almost obvious) fact
that if two paths are conditioned to avoid each other then
their endpoints tend to be far apart.  There are many versions
that can be used.  We will define a particular separation event.
The choice of $1/10$ is arbitrary but it is convenient to choose
a fixed small number.

\begin{definition}
$\;$
\begin{itemize}

\item If $\eta \in \saws_n$,
let $I_n(\eta)$ be the indicator function
of the event  
\[    
    \eta \cap (C_{n} \setminus C_{n-(1/10)})
 \subset \{x = (x_1,x_2,x_3): x_1  \geq e^{ -1}\, |x|\}.\]

\item Let $\Sep_n$ be the set of $\bfeta = (\eta^1,\eta^2) \in \pairs_n$
such that  $I_n(\eta^1)  = 1$ and
\, $I_n(-\eta^2) = 1$.

\item  If $n < m$ and $\eta' \in   \saws_{n, m}$, let  $I _{n,m}(\eta')$
be the indicator function
of the event  
\[    
    \eta' \cap (C_{n+(1/10)} \setminus C_{n })
 \subset \{(x_1,x_2,x_3): x_1  \geq e^{n-1}\}.\]

 \item   If $n < m$, let $\Sep_{n,m}$ be the set
 of $\bfeta = (\eta^1,\eta^2) \in  \bar \saws_{n,m}^2$ such
that   $I_{n,m}(\eta^1) =1$  and $ I_{n,m}(-\eta^2) = -1$.

\end{itemize}

\end{definition}

%
%

We will have two separation lemmas.  The first is stronger and
deals with the endpoint of the beginning of the path.  The second
is not as strong (we could prove the stronger result but do not
need it) and deals with the initial part of the final piece
of the path.  Various version of the separation lemma can be found
in \cite{Masson} and \cite{LV} in the two-dimensional case and \cite{Shir}
in the three-dimensional case.  For completeness we include a proof
of one version in the appendix. 

\begin{lemma}[Separation Lemma I]  There exists $c > 0$ such that if
$ 2 \leq n \leq m - 1$, $\bfeta
\in \saws_n$, and
\[     \lambda_m^\Sep(\bfeta) =
\lambda_n(\bfeta)  \sum_{\bfeta' \in \Sep_m, \bfeta \prec \bfeta'}
      \lambda_m(\bfeta' \mid \bfeta). \]
Then $  \lambda_m^\Sep(\bfeta) \geq c \,   \lambda_m (\bfeta).$
\end{lemma}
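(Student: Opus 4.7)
The plan is to follow the iterative/amplification argument of Kesten \cite{cutpoint} as adapted to LERW in \cite{Masson,LV,Shir}. Reformulated: we must show that under the probability measure $\lambda_m^{\#}(\cdot\mid\bfeta)$ on extensions of $\bfeta$, the event $\Sep_m$ has mass uniformly bounded below in $\bfeta, n, m$. The main engine will be a \emph{one-step amplification lemma}: there exist $\rho_0>0$ and a positive integer $j_0$, uniform in all parameters, such that for every scale $k$ with $n\le k \le m-j_0$ and every $\bgamma\in\pairs_k$ extending $\bfeta$,
\[
\frac{\sum_{\bgamma'\in\Sep_{k+j_0},\,\bgamma\prec\bgamma'}\lambda_{k+j_0}(\bgamma'\mid\bgamma)}{\sum_{\bgamma'\in\pairs_{k+j_0},\,\bgamma\prec\bgamma'}\lambda_{k+j_0}(\bgamma'\mid\bgamma)}\;\geq\;\rho_0.
\]
Applying this with $k=m-j_0$ (handling the finitely many cases $m-n<j_0$ by absorbing a constant) then gives the lemma.

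To prove the one-step amplification I will construct an explicit family of extensions of $\bgamma$ landing in $\Sep_{k+j_0}$ with uniform probability. From the endpoints $y^1,y^2$ of $\gamma^1,\gamma^2$, each path is extended along a prescribed ``tube'' of the type provided by Corollary \ref{extendcor}, chosen so that after $j_0$ scales the loop-erasures exit $C_{k+j_0}$ inside the half-spaces $\{x_1\ge |x|/4\}$ and $\{x_1\le-|x|/4\}$ respectively. This is built by Markovian iteration: at each intermediate scale, the second bullet of Corollary \ref{extendcor} (combined with Lemma \ref{importantlemma} and Proposition \ref{basiclerw}) yields a uniform positive probability that the continuation of the random walk conditioned to avoid the previously constructed path stays in a cone-shaped tube. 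Since loop erasure is a subpath, $\hat S$ inherits the containment. By taking the tubes on opposite sides of $\{x_1=0\}$ the two extensions are disjoint and satisfy the $\seppair_{k,k+j_0}$ separation condition.

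It remains to pass from the simple random walk estimate to the tilted measure $\lambda_{k+j_0}$. Proposition \ref{indprop} shows that the joint law of the two paths under $\mu_{k+j_0}\times\mu_{k+j_0}$ decomposes, up to uniform constants, as $\mu_k\otimes\mu_{k+1,k+j_0}^{\otimes 2}$. Since the tube extensions lie in $\seppair_{k,k+j_0}$, the lower bound in \eqref{qsub} converts $Q_{k+j_0}$ into $Q_k\,\bar Q_{k+1,k+j_0}$ with uniform constants, so the $Q_k$-tilt cancels in numerator and denominator. The remaining factor $\bar Q_{k+1,k+j_0}$ on the tube extension is bounded below by $\exp(-L_0)$, where $L_0$ is the loop-measure of loops in $\hat C_{k+j_0}$ meeting both tubes (and, for $d=2$, not disconnecting). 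Lemmas \ref{june6.lemma1} and \ref{sep14.lemma1} make $L_0$ uniformly finite, yielding $\rho_0$.

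The main obstacle is uniformity in $\bfeta$: the endpoints $y^1,y^2$ can be arbitrarily close and the paths can be heavily entangled up to scale $k$. This entanglement is handled by Corollary \ref{extendcor}, whose positive lower bound on the ``tube event'' is \emph{insensitive} to the avoided set --- a feature that ultimately comes from the uniformity in Lemma \ref{importantlemma}. The $d=2$ case carries the additional subtlety that the conditioned walk is only transient at rate $O(1/n)$ (Lemma \ref{lemma.sep8}); this is absorbed by replacing all loop-measure contributions by the measure of \emph{nondisconnecting} loops, which by Lemma \ref{sep14.lemma1} remain uniformly bounded on the scales of interest. Assembling these pieces gives the one-step amplification and thereby the separation lemma.
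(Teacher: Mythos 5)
Your proposed ``one-step amplification lemma'' is not a reduction of the separation lemma --- it \emph{is} the separation lemma (set $n=k$, $m=k+j_0$), restated with the same uniform-over-all-configurations quantifier that makes the statement hard in the first place. The attempted proof of it via tubes has a genuine gap at exactly the point you identify as ``the main obstacle.'' The tips $y^1,y^2$ of $\gamma^1,\gamma^2$ at scale $k$ may be adjacent, and the paths $\gamma^1,\gamma^2$ may be heavily interwound near $\p C_k$. Corollary \ref{extendcor} does not route two close paths into opposite cones with uniform probability: its second bullet already assumes the endpoints are in $\{x_1\geq |x|/2\}$ (so you cannot use it to steer a path whose tip starts on the wrong side), and neither bullet says anything about the additional mutual-avoidance constraint $\tilde\eta^1\cap\tilde\eta^2=\emptyset$. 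When the tips are at distance $\delta e^k$ with $\delta\ll 1$, the probability of producing a $\Sep_{k+j_0}$-configuration in $O(1)$ scales decays like a power of $\delta$, not like a uniform $\rho_0$. Corollary \ref{extendcor}'s insensitivity to the avoided set controls a single walk avoiding a fixed set; it does not ``cancel'' the entanglement between the two growing paths.

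The paper's argument (Appendix B) is structured precisely to avoid needing your one-step claim. It introduces the quantitative separation $\Delta(\bfeta)$ and proves three scale-indexed claims: (i) a path doubling $\Delta$ over $O(\Delta)$ scales has uniformly positive conditional $\lambda$-probability; (ii) iterating (i) gives $\sum_{\bfeta'\in\Sep_{n+1}}\lambda(\bfeta'\mid\bfeta)\geq c_1\Delta(\bfeta)^\theta$; and (iii) the conditional probability of \emph{remaining} poorly separated over $O(r)$ scales is at most some $\rho<1$. The lemma then follows from a Kesten-type iteration over the sequence $b_j$ (the infimum of the target ratio over configurations with $\Delta\geq e^{-j}$), exploiting that the summable cost of escaping entanglement in (ii) is beaten by the exponential decay in (iii). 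To repair your approach you would essentially have to reproduce this iteration; the one-step uniform lower bound is false as stated, and no tube construction can make it true.
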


\begin{lemma}[Separation Lemma II]  There exists $c > 0$ such that
if $n < m$, then
\[      \sum_{\bfeta \in \pairs_m, \bfeta \in \Sep_{n,m}}
       \mu(\bfeta) \, \bar Q_{n,m}(\bfeta')
       \geq c \, \sum_{\bfeta \in \pairs_m}
       \mu(\bfeta) \, \bar Q_{n,m}(\bfeta').\]
\end{lemma}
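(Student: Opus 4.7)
The plan is to first reduce the statement to one purely about the outer segments $\bfeta_{n+1,m}$, using the factorization in Proposition \ref{indprop}, and then to run an argument analogous to the proof of Separation Lemma I but applied at the inner (rather than outer) boundary of the outer excursions.

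First I would observe that both $\bar Q_{n,m}(\bfeta)$ and the event $\{\bfeta \in \Sep_{n,m}\}$ depend on $\bfeta$ only through its outer piece $\bfeta_{n+1,m}$. Reorganizing the sum $\sum_{\bfeta \in \pairs_m}$ according to the decomposition \eqref{decomp2} and applying Proposition \ref{indprop} / equation \eqref{compmeasure} to sum out the middle piece $\bfeta^*$, the inner factor $\mu_n(\bfeta_n)$ appears on both sides and cancels, reducing the desired inequality (up to a uniform constant) to
\[
  \sum_{\bfeta' \in \barpairs_{n+1,m} \cap \Sep_{n,m}} \mu_{n+1,m}(\bfeta')\,\bar Q_{n,m}(\bfeta')
  \;\geq\; c \sum_{\bfeta' \in \barpairs_{n+1,m}} \mu_{n+1,m}(\bfeta')\,\bar Q_{n,m}(\bfeta'),
\]
where $\mu_{n+1,m}$ denotes the product measure on $\barpairs_{n+1,m}$.

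The core argument then parallels Separation Lemma I, but applied at the ``inner'' end of the outer excursions. For a pair $\bfeta'$ sampled under the weighted measure, I would represent $\mu_{n+1,m}$ via the loop-measure / simple-random-walk description of LERW (Proposition \ref{basiclerw}, Proposition \ref{looperasefact}) and apply Corollary \ref{extendcor} separately in the two shells just outside $C_n$: with uniformly positive probability, the underlying random-walk excursion generating $\eta^1_{n+1,m}$ can be arranged so that within the shell $C_{n+(1/10)} \setminus C_n$ it is confined to the cone $\{x_1 \geq |x|/4\}$, and symmetrically the excursion for $\eta^2_{n+1,m}$ is confined to the opposite cone. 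Since the loop-erasure is a subpath of the underlying walk, both $\eta^1_{n+1,m}$ and $\eta^2_{n+1,m}$ inherit the cone confinement, placing $\bfeta'$ in $\Sep_{n,m}$.

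The main obstacle is controlling the weight $\mu_{n+1,m}(\bfeta')\bar Q_{n,m}(\bfeta')$ under this modification. The LERW density factor changes by at most a bounded multiplicative factor by Corollaries \ref{cor1} and \ref{cor1.2d}, since the escape-probability terms $\Es$ are stable under local modifications in a thin shell. More delicate is the loop weight $\bar Q_{n,m} = e^{-L_m(\bfeta_{n+1,m})}$: one must show that forcing the initial parts of the outer paths into separated cones near $\p C_n$ changes $L_m$ by at most a bounded additive constant. This follows from Lemmas \ref{june6.lemma1}, \ref{june6.lemma1.2d}, and \ref{sep14.lemma1}, which give uniform bounds on the loop measure of loops of diameter comparable to $e^n$ that intersect the annulus near $\p C_n$. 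For $d=2$ the nondisconnection restriction built into $L_m$ is essential here and is handled exactly as in the $d=2$ case of Separation Lemma I, using the Beurling estimate to rule out the otherwise-divergent contribution of loops surrounding the origin. Combining these weight controls with the uniformly positive probability of the separated extension gives the statement with a constant $c > 0$ independent of $n$ and $m$.
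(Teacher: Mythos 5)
The paper does not actually prove Separation Lemma II: the author gives a sketch only of Separation Lemma I in the appendix and cites \cite{Masson,LV,Shir} for the rest, so your proposal must stand on its own. Your overall strategy (reduce to the outer excursions and run a cone-confinement argument near $\p C_n$) is sensible, but there are two real gaps.

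The first is in the reduction step. You invoke Proposition \ref{indprop} / \eqref{compmeasure} to ``sum out the middle piece $\bfeta^*$'' and claim the inner factor $\mu_n(\bfeta_n)$ cancels. But \eqref{compmeasure} gives
$\sum_{\bfeta^*}\mu_m(\bfeta_n\oplus\bfeta^*\oplus\bfeta_{n+1,m})\asymp \mu_n(\bfeta_n)\mu_{n+1,m}(\bfeta_{n+1,m})$
with the sum over \emph{all} $\bfeta^*$ producing an element of $\saws_m^2$, whereas the sums in Separation Lemma II are restricted to $\bfeta\in\pairs_m$, i.e., the two full paths must be mutually avoiding. That restriction genuinely couples $\bfeta_n$ and $\bfeta_{n+1,m}$: for, say, a tip of $\bfeta_n$ pointing one way and the matching initial point of $\bfeta_{n+1,m}$ pointing the opposite way, the constrained middle-piece sum is \emph{not} comparable to $\mu_n(\bfeta_n)\mu_{n+1,m}(\bfeta_{n+1,m})$. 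Dropping the constraint yields an upper bound on the right-hand side, but the lower bound you need on the left-hand side does not follow from \eqref{compmeasure} alone. To repair this one has to also restrict $\bfeta_n$ to a separated set such as $\Sep_n$ (so the middle pieces can be routed through compatible cones via Corollary \ref{extendcor}), and then additionally establish that the $\mu_n\times\mu_n$ mass of $\Sep_n$ is comparable to that of $\pairs_n$ --- a separation statement for the unweighted product LERW measure that you do not address.

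The second gap is the use of Corollary \ref{extendcor} to \emph{achieve} $\Sep_{n,m}$. That corollary controls the middle excursion $\omega$ from $y$ (terminal vertex of the inner piece) to $w$ (initial vertex of the outer piece), and its second bullet explicitly \emph{assumes} that $y$ and $w$ already lie in the cone $\{x_1\geq |x|/2\}$; in particular it assumes the initial point of $\bfeta_{n+1,m}$ is already separated, which is essentially the event $\Sep_{n,m}$ itself. Invoking it to conclude $\bfeta_{n+1,m}\in\Sep_{n,m}$ is circular. What is actually required is a separation argument applied to the outer excursion sampled under the $\bar Q_{n,m}$-weighted product measure, showing with uniformly positive probability that the two outer paths enter opposite cones inside $C_{n+(1/10)}\setminus C_n$. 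This needs a conditioning/resampling argument analogous to the appendix proof of Separation Lemma I but run inward from $\p C_m$, and it has to account for the fact that $\mu_{n+1,m}$ is a two-sided weight carrying both an $\Es_\eta$ factor at the outer end and a Poisson kernel factor $H_{\p(\hat\Z^d\setminus\eta)}(0,w)$ at the inner end, rather than a one-sided LERW.
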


We will now consider some easy consequences.

\begin{proposition}  \label{sep26.1}
There exist constants $0 < c_1 < c_2 < \infty$ such that the
following holds.
\begin{enumerate}

\item  If $\bfeta = (\eta^1,\eta^2)  \in \Sep_n$, then $ \lambda_{n+1}(\bfeta) \geq
c_1\, \lambda_n(\bfeta)$,

\item  For every $n \geq 0$, \[ c_1 \,\lambda_n \leq \lambda_{n+1} \leq \lambda_n.\]
\item   If $n  < m-1$, 
\[        \lambda_{n+1,m} \asymp \frac{ \lambda_m}{ \lambda_n}. \]
and, more generally, $ \lambda_{m}(\bfeta) \geq c_1 \, \lambda_m/\lambda_n$
for $\bfeta \in   \Sep_n$.

\item If $\bfeta \in \pairs_n$, $ \lambda_{m}(\bfeta) \leq c_2  
\,  \lambda_n(\bfeta)\, (\lambda_m/\lambda_n). $

%
%
\end{enumerate}

\end{proposition}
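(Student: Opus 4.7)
The four claims interlock, and I would prove them in the order (1), (2), (3), (4), with (1) as the engine and the remainder following by combining (1) with the separation lemmas and the decomposition machinery already set up in \eqref{compmeasure} and \eqref{qsub}.

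\emph{Part (1).} Fix $\bfeta=(\eta^1,\eta^2)\in\Sep_n$ with terminal points $y^1,y^2$; by the definition of $\Sep_n$ these sit well inside opposite sectors near $\pm\e_n$. By Proposition \ref{basiclerw}, the conditional distribution of the extension to $\pairs_{n+1}$ under $\mu_{n+1}$ factorizes as a pair of independent LERWs of simple random walks from $y^1,y^2$ conditioned to avoid $\eta^1,\eta^2$ respectively. Corollary \ref{extendcor} provides a uniformly positive-probability event $E$ on which each extension is confined to the sector neighborhood $V_n(y^i,\cdot)$. On $E$: (a) the extended pair $\bgamma$ lies in $\pairs_{n+1}$ because $\eta^1,\eta^2$ remain separated; (b) $\bgamma$ is in the separated class $\seppair_{n,n+1}$ used in \eqref{qsub}; (c) the extra loops counted by $L_{n+1}(\bgamma)$ beyond $L_n(\bfeta)$ must cross $C_{n+2}\setminus C_{n-1}$, and by Lemmas \ref{june6.lemma1}/\ref{sep14.lemma1} their total loop measure is uniformly bounded. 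Thus $Q_{n+1}(\bgamma)/Q_n(\bfeta)\ge c$ on $E$, and integrating out the extension gives $\lambda_{n+1}(\bfeta)\ge c_1\lambda_n(\bfeta)$.

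\emph{Part (2).} For the upper bound, a short planarity check — any loop in $\hat C_n$ is non-disconnecting for $C_n$ iff it is non-disconnecting for $C_{n+1}$, and $\hat C_n\subset\hat C_{n+1}$ — gives $L_{n+1}(\bgamma)\ge L_n(\bgamma_n)$, hence $Q_{n+1}(\bgamma)\le Q_n(\bgamma_n)$, and summing using $\sum_{\bgamma\succ\bgamma_n}\mu_{n+1}(\bgamma)=\mu_n(\bgamma_n)$ yields $\lambda_{n+1}\le\lambda_n$. For the lower bound, Separation Lemma I (specialized with the trivial initial pair at a small scale) gives $\sum_{\bfeta\in\Sep_n}\lambda_n(\bfeta)\ge c\lambda_n$, and combining with Part (1),
\[
\lambda_{n+1}\ge \sum_{\bfeta\in\Sep_n}\lambda_{n+1}(\bfeta)\ge c_1\sum_{\bfeta\in\Sep_n}\lambda_n(\bfeta)\ge c_1c\,\lambda_n.
\]

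\emph{Parts (3) and (4).} Both follow from the decomposition $\bgamma=\bgamma_n\oplus\bgamma^*\oplus\bgamma_{n+1,m}$ together with \eqref{compmeasure} and \eqref{qsub}. For the upper half, use the upper bound of \eqref{qsub} unconditionally to get
\[
\lambda_m(\bfeta)\le c_2\,Q_n(\bfeta)\sum_{\bgamma^*,\bgamma_{n+1,m}}\mu_m(\bfeta\oplus\bgamma^*\oplus\bgamma_{n+1,m})\,\bar Q_{n+1,m}(\bgamma_{n+1,m})\asymp \lambda_n(\bfeta)\,\lambda_{n+1,m},
\]
which is Part (4) after dividing by $\lambda_n(\bfeta)$; summing over $\bfeta\in\pairs_n$ and rearranging gives $\lambda_{n+1,m}\gtrsim\lambda_m/\lambda_n$. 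For the matching lower half, take $\bfeta\in\Sep_n$, restrict $\bgamma_{n+1,m}\in\Sep_{n+1,m}$ (a positive fraction by Separation Lemma II), and restrict the middle $\bgamma^*$ to the extendcor event forcing $\bgamma\in\seppair_{n,m}$; the lower bound of \eqref{qsub} now gives $Q_m\gtrsim Q_n\,\bar Q_{n+1,m}$ on this set, and the same summation yields $\lambda_m(\bfeta)\gtrsim\lambda_n(\bfeta)\lambda_{n+1,m}\gtrsim\lambda_n(\bfeta)(\lambda_m/\lambda_n)$, which is the ``more generally'' clause. Summing this over $\bfeta\in\Sep_n$ and invoking Separation Lemma I once more completes $\lambda_{n+1,m}\asymp\lambda_m/\lambda_n$.

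\emph{Main obstacle.} The delicate step is the simultaneous separation required for the lower bounds in Parts (1) and (3): to apply the lower half of \eqref{qsub} one needs the middle piece to realize $\seppair_{n,m}$, which demands control at both the $n$ and $n+1$ boundaries as well as of the diameter of $\bgamma^*$. The separation lemmas supply the boundary control and Corollary \ref{extendcor} supplies the middle, but they must be coordinated so the conditioning events remain consistent. In $d=2$ one must additionally track the non-disconnecting-loop accounting across scales, which is precisely the planarity observation used in the upper bound of Part (2).
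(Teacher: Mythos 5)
Your proposal is correct and follows essentially the same route as the paper, which gives only a terse pointer proof: for (1), use Corollary \ref{extendcor} to produce a positive-probability separated extension and bound the loop-measure increment via Lemma \ref{june6.lemma1} (or Lemma \ref{sep14.lemma1} for $d=2$); for (2), combine with Separation Lemma I, the upper bound $\lambda_{n+1}\le\lambda_n$ being immediate from $Q_{n+1}(\bgamma)\le Q_n(\bgamma_n)$; for (3) and (4), combine Separation Lemmas I and II with \eqref{compmeasure} and \eqref{qsub}. One small imprecision: $\seppair_{n,m}$ is defined only for $n<m-1$, so ``$\seppair_{n,n+1}$'' in step (b) of Part (1) is not literally available; but that step is also unnecessary, since step (c) already bounds the loop-measure increment directly, which is all that is needed.
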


\begin{proof}$\;$

\begin{enumerate}

\item  We use Corollary \ref{extendcor} to see that with positive
$\mu_{n+1}$ probability the extension of $\bfeta$ will still
be separated. 
\item This follows from part 1 and the Separation Lemma I.

\item  Here we use both Separation Lemma I and Separation Lemma II.

\item  This is done similarly.



\end{enumerate}

\end{proof}

\begad

Separation lemmas are key tools for many problems.  They
can be considered generalizations of ``boundary Harnack principles''.
We will not discuss this, but just say that the idea is that if you have
a bounded domain, start a process very near the boundary, and condition
the process to not leave the domain in, say, one unit of time, then
the process will get away from the boundary very quickly.  Although the probability
of escaping the boundary is small, the probability of staying near the
boundary without exiting is of a smaller order of magnitude.  

The analogue for us of being near the boundary is to say that pair
of walks are close to each other near their terminal points.  Once
the paths ``separate'' somewhat, then there is a reasonable chance
that they will stay separated.  

\endad

\subsection{Coupling the pairs of walks}  \label{couplesec}

In this section we fix a (large) integer $N$.  Our goal is to
couple the probability measures $\lambda_N^\#(\bfeta)$ and $\lambda_N^\#(
\bfeta')$ for different starting configurations $\bfeta,\bfeta'$.  We use
a coupling strategy similar to that in Section \ref{onecouplesec} although here
we will only go up to level $N$ rather than to infinity.
We start by giving some 
notation; in order to make it easier to read, we  will leave $N$ implicit.
\begin{itemize}

\item  Let $  b_n = \lambda  _{n,N}.$
If $n \leq N$ and $\bfeta \in \pairs_n$, let
\[  b(\bfeta) =  \frac{\lambda_N(\bfeta)}{\lambda_n(\bfeta)}
= \sum_{\bfeta' \in 
\pairs_N, \bfeta \prec \bfeta'} \lambda_N(\bfeta' \mid
\bfeta) .\]
If $\bfeta \in \pairs_N$, then $b(\bfeta) = 1$.
Note that if $n < N-1$, 
\begin{equation}  \label{densitycheck}
b(\bfeta) =   \sum_{\bfeta' \in \pairs_{n+1},
\bfeta \prec \bfeta'} \lambda(\bfeta' \mid
\bfeta)  \, b(\bfeta').
\end{equation}

\item  If $\bfeta, \tilde \bfeta \in \pairs_n$ we write
$\bfeta=_j  \tilde \bfeta$ if the paths agree from the first visit
to $\partial C_{n-j}$ onwards.  In other words, if we write
\[         \bfeta = \bfeta_{n-j} \oplus \bfeta' ,
\;\;\;\;
\tilde  \bfeta = \tilde \bfeta_{n-j} \oplus \tilde \bfeta'  ,\]
with $\bfeta_{n-j}, \tilde \bfeta_{n-j} \in \pairs_{n-j}$,
then $ \bfeta'  = \tilde \bfeta'.$

\end{itemize}

We will consider  Markov
chains $\bgamma_n, n=k,k+1,\ldots,N$ 
taking values in
$    \pairs :=\bigcup_{i=1}^\infty \pairs_i $
with the properties:

\begin{itemize}

\item For all $n$,  $\bgamma_n \in \pairs_n$.  If $j < n$, then  
$\bgamma_j \prec \bgamma_n$.

\item  The transitions are given by 
\begin{equation}  \label{chaindensity}
    p(\bgamma_n, \bgamma_{n+1})
=   \lambda( \bgamma_{n+1} \mid \bgamma_n  )
\, \frac{   b(\bgamma_{n+1} )}  {   b(\bgamma_n)} 
\leq  \mu( \bgamma_{n+1} \mid \bgamma_n  )
\, \frac{  b(\bgamma_{n+1} )}  {   b(\bgamma_n)} 
\end{equation}

\end{itemize}

This formula assumes that
$  b(\bgamma_n) > 0$, but we start with this condition
at time $k$ and hence with probability one this will
hold for all $n$. 
Note that \eqref{densitycheck} shows that this is a  
well-defined transition probability. We recall that  $   b(\bgamma_{n+1} )
\leq c_2 \,   b_{n+1}$ and if $\bgamma_{n+1}
\in \Sep_{n+1}$, then  $  b(\bgamma_{n+1} )
\geq c_1\,   b_{n+1}$

We now describe the
coupling which will be a construction of
ordered pairs $\bgamma^*_n = (
\bgamma_n, \tilde \bgamma_n)$ such that $\bgamma_n,
\tilde \bgamma_n$ both follow the Markov chain although
with different initial
distributions on $\bgamma_n,\tilde \bgamma_n$. More precisely,
we   define a   coupling
$X_n = (\bgamma_n,\tilde\bgamma_n,J_n)$ to be
random variables defined on the same probability space
such that the following hold.
\begin{itemize}
\item  $(\bgamma_k,\bgamma_{k+1},\ldots,\bgamma_N)$ is
a Markov chain satisfying the transition probabilities
given by \eqref{chaindensity}.
\item  Similarly, 
$(\tilde \bgamma_k,\tilde \bgamma_{k+1},\ldots,\tilde \bgamma_N)$ is
a Markov chain satisfying \eqref{chaindensity} although
the initial distribution may be different.
\item  $J_n$ is a nonnegative integer random variable
with the property that if $J_n = j$, then
\[     \bgamma_n = _j \tilde \bgamma_n, \;\;\;\;
  b(\bgamma_n) \geq e^{-j/4} \,   b_n.\]
\item  $J_k = 0$, and for every $k \leq n <N$, either $J_{n+1} = J_n + 1$
or $J_{n+1} = 0$.

\end{itemize}

We can now state the main result.

\begin{proposition} \label{coupling}
There exists $\alpha > 0, c < \infty$
such that for all $k,n$ and all $N \geq 2n + k$, then
for any initial distributions on $\bgamma_k,\tilde \bgamma_k$
we can find a coupling such that
\[     \Prob\{J_N \leq n\} \leq c \, e^{-\alpha n}.\]
In particular, except for an event of probability
$O(e^{-\alpha n})$,
$                \bgamma_N =_n \tilde \bgamma_N.$
\end{proposition}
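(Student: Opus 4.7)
The plan is to mimic the one-sided coupling of Section \ref{onecouplesec}, now adapted to the $b$-weighted Markov chain with transitions \eqref{chaindensity}. I construct the triple $X_n = (\bgamma_n,\tilde\bgamma_n, J_n)$ inductively for $n = k,\ldots,N$ and then apply the lemma at the end of Section \ref{onecouplesec} to the process $\tilde X_m := J_{k+m}$, which is $\{0,1,2,\ldots\}$-valued with $\tilde X_{m+1}\in\{\tilde X_m+1,0\}$. That lemma requires (i) an exponential-in-$j$ bound on the resetting probability when $J_n = j$ and (ii) a uniform lower bound $\delta > 0$ on the probability that, starting from $J_n = 0$, the chain eventually remains positive forever.

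\textbf{Advance step.} Suppose $J_n = j \geq 1$, so $\bgamma_n =_j \tilde\bgamma_n$ and $b(\bgamma_n) \geq e^{-j/4}b_n$; I produce $J_{n+1} = j+1$ up to a conditional event of probability $O(e^{-\alpha' j})$. Use acceptance-rejection: by \eqref{chaindensity}, $p(\bgamma_n,\cdot)$ is the $\mu(\cdot\mid\bgamma_n)$-transition reweighted by the ratio $b(\bgamma_{n+1})/b(\bgamma_n)$, which is bounded by Proposition \ref{sep26.1}. The $\mu$-proposal depends on $\bgamma_n$ only through the pair of terminal vertices, their escape probabilities $\Es_{\bgamma_n}$, and the relevant boundary Poisson kernel; by Corollaries \ref{cor1}, \ref{cor1.2d}, and \ref{cor4.1}, each of these quantities for $\bgamma_n$ and $\tilde\bgamma_n$ agrees to relative error $O(e^{-j})$ (using transience in $d=3$ and the Beurling estimate in $d=2$), so the two $\mu$-proposals can be coupled to coincide with probability $1-O(e^{-j})$. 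The accept/reject probability involves $b(\bgamma_{n+1})$, which via \eqref{densitycheck} and Proposition \ref{indprop} depends on the piece being added together with local boundary data that again agrees to relative error $O(e^{-j})$, giving a coupling of the accept/reject outcomes with the same error. To maintain the quantitative bound on $b$, I condition on $\bgamma_{n+1}\in\Sep_{n+1}$, which by Separation Lemma I and the boundedness of the $b$-reweighting has uniform positive conditional probability and forces $b(\bgamma_{n+1}) \geq c_1 b_{n+1}$, well beyond the required $e^{-(j+1)/4} b_{n+1}$.

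\textbf{Restart step.} Suppose $J_n = 0$. I must produce $J_{n+1} = 1$ with uniform positive probability. Using Corollary \ref{extendcor} together with the two separation lemmas, with uniform positive probability the one-scale extensions of both $\bgamma_n$ and $\tilde\bgamma_n$ lie in a localized region near $\pm\e_{n+1}$ and land in $\Sep_{n+1}$. Conditional on this event the two extension distributions are comparable up to constants and can be coupled to coincide, and being in $\Sep_{n+1}$ yields $b(\bgamma_{n+1}) \geq c_1 b_{n+1} \geq e^{-1/4} b_{n+1}$ (absorbing $c_1$ into the definition of the exponent $1/4$ if necessary). With both steps established the lemma of Section \ref{onecouplesec} gives $\E[e^{\beta T}] \leq C$ for $T = \max\{m \leq N-k : J_{k+m} = 0\}$. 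Since $J_N > n$ forces the last reset to be at most $N-k-n$, we have $\{J_N \leq n\} \subseteq \{T \geq N-k-n\}$, and Markov's inequality combined with $N-k \geq 2n$ gives $\Prob\{J_N \leq n\} \leq Ce^{-\beta n}$, proving the proposition with $\alpha := \beta$.

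\textbf{Main obstacle.} The delicate point is the advance step, where one must simultaneously couple the $\mu$-proposal and the $b$-weighting. A priori $b(\bgamma_{n+1})$ encodes the entire future out to scale $N$, so one might fear that changes deep inside $C_{n-j}$ propagate globally. The resolution is that Proposition \ref{indprop} decomposes $\lambda$ (and hence $b$) through the scale-$(n+1)$ annulus, so $b(\bgamma_{n+1})$ factors up to multiplicative constants into a part depending only on $\bgamma_{n+1}$ near its terminal vertices and a part depending only on the bulk beyond $\partial C_{n+2}$. Only the first factor is sensitive to the configuration inside $C_{n-j}$, and only through the same local escape probabilities and Poisson kernels controlled by Corollaries \ref{cor1}--\ref{cor4.1}; this yields the required $1+O(e^{-j})$ relative agreement of the transition densities.
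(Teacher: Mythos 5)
Your overall architecture — reducing the coupling to advance/restart steps for the index process $J$ and then invoking the exponential-moment lemma from Section \ref{onecouplesec} — matches the paper's, and your advance step is essentially the same as the lemma the paper proves (agreement to $O(e^{-j/4})$ via Corollaries \ref{cor1}, \ref{cor1.2d}, \ref{cor4.1}, together with control of the $b$-factor). However, your restart step contains a genuine gap.

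You claim that when $J_n = 0$ the chain can be boosted to $J_{n+1}=1$ in a \emph{single} scale by conditioning on both one-scale extensions landing near $\pm\e_{n+1}$ and then coupling the ``comparable'' extension laws to coincide. This cannot work. When $J_n = 0$ the pairs $\bgamma_n$ and $\tilde\bgamma_n$ generically have different terminal vertices on $\p C_n$. The relation $\bgamma_{n+1} =_1 \tilde\bgamma_{n+1}$ requires the post-$\p C_n$ portions of the two pairs to be \emph{identical} as lattice paths, and those portions begin at neighbors of the respective terminal vertices; if those terminal vertices differ, the continuations cannot be equal no matter how the extensions are sampled. Comparability of the two transition densities does give a coupling where the extensions are equal with positive probability in the one-sided setting \emph{only when the starting vertices already agree} (which is exactly what the advance step uses, since $=_j$ with $j\geq 1$ forces equal tips). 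For a genuine restart one must first run for a fixed number of scales so that the underlying \emph{simple random walks} — not the LERW transitions — can be coupled to merge in an annulus, and then use a cut-time argument (Proposition \ref{looperasefact}, Corollary \ref{extendcor}, and the cut-point result of \cite{rwcutpoint}) to pass the merging of random walks to equality of loop-erasures from some later scale onward. This is precisely what the lemma preceding Proposition \ref{coupling} does, producing $\bgamma_{k+j+2} =_j \tilde\bgamma_{k+j+2}$ after $j+2$ scales, and it is why the paper's coupling algorithm operates in $r$-blocks (with a fixed $r\geq 3$) rather than scale by scale. Your reduction to the Section \ref{onecouplesec} lemma would therefore need to be applied to the $r$-block-indexed process, not to $J_{k+m}$ directly, and the restart must produce $J$ equal to $r-2$ (not $1$) after an $r$-block.

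Two smaller points: (1) forcing $\bgamma_{n+1}\in\Sep_{n+1}$ at \emph{every} advance step would distort the target distribution; the paper's advance lemma instead preserves the $b$-lower bound $b(\bgamma_{n+1})\geq e^{-(j+1)/4}b_{n+1}$ via the Markov-chain transition itself without conditioning on separation. (2) Your appeal to Proposition \ref{indprop} to localize $b$ is in the right spirit, but the paper makes the localization precise in the lemma immediately before Proposition \ref{coupling}, where the relevant estimates on $p(\bgamma_n,\cdot)$ and on $b(\bgamma_n)$ versus $b(\tilde\bgamma_n)$ are stated; you should cite those rather than only the independence-up-to-constants proposition.
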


We collect some of the lemmas from previous sections here.
These are the facts that we will need in this subsection.
We have already done most of the work in establishing these results
so we will be brief in our proof.

\begin{lemma}  There exist $0 < c_1 < c_2 < \infty$ and
$\beta > 0$ such that the following holds.
\begin{enumerate}

\item  For any $\bgamma_n$, 
\[ \sum_{  \bfeta
\cap C_{n - (j/2)} \neq \eset}
\lambda(\bgamma_n \oplus \bfeta \mid \bgamma_n)
\leq c \, e^{-j/2}.\]

\item  If $\bgamma_n =_j \tilde \bgamma_n$, and $\bfeta
\cap C_{n - (j/2)} = \eset$, then
\[        \lambda(\bgamma_n \oplus \bfeta
\mid \bgamma_n) = \lambda(\tilde \bgamma_n
\oplus \bfeta \mid \tilde \bgamma_n) 
 \, [1 + O(e^{-j/2})], \] 
%
%
\item  If $\bgamma_n =_j \tilde \bgamma_n$, then
\[   | b(\bgamma_n) -   b(\tilde \bgamma_n) |\leq c_2
\, e^{-j/2} \,   b_n.\]
In particular, if $  b(\bgamma_n) \geq e^{-j/4} \, 
b_n$, then
\[    b(\bgamma_n) =   b(\tilde \bgamma_n) \, [1
+ O(e^{-j/4})],\]
\[   \sum_{  \bfeta
\cap C_{n - (j/2)} \neq \eset} p (\bgamma_n,
\bgamma_n \oplus \bfeta) \leq c \, e^{-j/4}, \]
and if also $ b(\bgamma_n \oplus \bfeta ) \geq e^{-(j+1)/4} \, 
b_{n+1}$, then
\[ p(
\bgamma_n, \bgamma_n \oplus \bfeta) = p(
\tilde \bgamma_n, \tilde \bgamma
\oplus \bfeta) 
 \, [1 + O(e^{-j/4})].\]

\item  If $n \leq N-1$, then given $\bgamma_n$,
the probability that $ \bgamma_{n+1}  \in \Sep_{n+1}$     is
at least $c_1$.  In other words, for every $\bgamma_n$,
\begin{equation}  \label{canadaday.1}
 \sum_{\bfeta \in \Sep_{n+1}}
       p(\bgamma_n,\bgamma_n \oplus \bfeta) \geq c_1
      .
      \end{equation}
In particular,
\[    \sum_{\bfeta \in \Sep_{n+1}}
       p(\bgamma_n,\bgamma_n \oplus\bfeta) \, b(\bfeta)
         \geq c \,  b_{n+1}.\]
\[ \sum_{\bfeta, b_{n+1}(\bfeta \oplus \bfeta)
\leq e^{-(j+1)/4} } p(\bgamma_n,\bgamma_n \oplus\bfeta)
  \leq c \, e^{-j/4}.\]
\end{enumerate}

\end{lemma}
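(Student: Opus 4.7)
The plan is to handle the four parts sequentially, since each builds on the previous ones, and then observe that the estimates on transition probabilities are essentially combinations of parts (1)--(3) with the earlier Separation Lemmas.

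For part (1), the quantity $\sum_{\bfeta \cap C_{n-(j/2)} \neq \eset} \lambda(\bgamma_n \oplus \bfeta \mid \bgamma_n)$ is dominated by the $\mu$-probability that the continuation of the LERW after $\bgamma_n$ reaches $C_{n-(j/2)}$. This in turn is bounded, using Proposition \ref{basiclerw}, by the probability that a simple random walk starting at the endpoint of $\bgamma_n$ and conditioned to avoid $\bgamma_n$ re-enters $C_{n-(j/2)}$. This is exactly what Corollary \ref{feb7.cor1} gives (with bound $ce^{-j/2}$; in fact $ce^{-j/2}$ even for each component separately, so the pair bound follows by union). The loop-measure weight $Q$ only decreases probabilities, so the same bound holds for $\lambda$.

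For part (2), I first compare the $\mu$-transition densities. Using the formula \eqref{mutransition}, the density at $\bgamma_n \oplus \bfeta$ depends on $\bgamma_n$ only through $F_{\tilde\gamma}(\Z^d \setminus \gamma_n)$ and through $\Es_{\gamma_n}(z_n)/\Es_{\gamma_n \oplus \tilde \gamma}(z_{n+1})$ type factors. When $\bgamma_n=_j \tilde\bgamma_n$ and $\bfeta$ avoids $C_{n-(j/2)}$, the relevant Poisson kernels are compared by Corollary \ref{cor4.1} (with error $O(e^{-j/2})$), and the difference in the loop-measure factors $F$ comes only from loops in $\hat C_m$ that hit both $\bfeta$ and the symmetric difference of $\bgamma_n$ and $\tilde \bgamma_n$; such loops must cross from $\p C_{n-(j/2)}$ to a region near $\bfeta$, and Lemma \ref{june6.lemma1} (for $d \geq 3$) or Lemma \ref{sep14.lemma1} (for $d=2$, where disconnecting loops cancel) bounds their measure by $O(e^{-j/2})$. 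The same analysis applies to the $Q$-weights that distinguish $\lambda$ from $\mu$, since only loops reaching into $C_{n-(j/2)}$ see the difference.

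Part (3) follows by decomposing $b(\bgamma_n) = \sum_{\bgamma_n \prec \bfeta' \in \pairs_N} \lambda_N(\bfeta' \mid \bgamma_n)$ according to whether the portion of $\bfeta'$ after $\bgamma_n$ enters $C_{n-(j/2)}$ or not. On the ``avoid'' event, part (2) gives a multiplicative agreement $[1+O(e^{-j/2})]$ between the corresponding summands for $\bgamma_n$ and $\tilde\bgamma_n$; on the ``enter'' event, part (1) bounds the total contribution by $O(e^{-j/2}) \cdot b_n$, using that each summand is bounded by the corresponding $\mu$-summand times $c_2$, and the total is at most $c_2 b_n$ by Proposition \ref{sep26.1}. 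This yields the absolute bound $|b(\bgamma_n) - b(\tilde\bgamma_n)| \leq c_2 e^{-j/2} b_n$. When $b(\bgamma_n) \geq e^{-j/4} b_n$, dividing by $b(\bgamma_n)$ converts the absolute error $e^{-j/2} b_n$ into the relative error $e^{-j/4}$, giving the multiplicative statement. The transition-probability bounds then come from the definition $p(\bgamma_n,\bgamma_n\oplus\bfeta) = \lambda(\bgamma_n\oplus\bfeta \mid \bgamma_n)\, b(\bgamma_n \oplus\bfeta)/b(\bgamma_n)$: for the sum over $\bfeta$ with $\bfeta \cap C_{n-(j/2)} \neq \eset$, use $b(\bgamma_n\oplus\bfeta) \leq c_2 b_{n+1}$, the lower bound $b(\bgamma_n) \geq e^{-j/4} b_n$ from the coupling invariant, the comparability $b_{n+1} \asymp b_n$ from Proposition \ref{sep26.1}(2), and part (1); for the pointwise ratio of $p$'s, combine part (2), the multiplicative version just proved applied at level $n$ (with exponent $j/4$), and the same applied at level $n+1$ with the assumption $b(\bgamma_n \oplus \bfeta) \geq e^{-(j+1)/4}\, b_{n+1}$.

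For part (4), the bound \eqref{canadaday.1} is essentially a restatement of Separation Lemma II together with the density conversion $p = \lambda \cdot b(\cdot)/b(\bgamma_n)$. Concretely, Separation Lemma II says that the $\lambda$-mass of extensions into $\Sep_{n+1}$ is a positive fraction of all extensions; since $b(\bgamma_{n+1}) \geq c_1 b_{n+1}$ on $\Sep_{n+1}$ by Proposition \ref{sep26.1}(3), and the full sum $\sum_\bfeta \lambda(\bgamma_n \oplus \bfeta \mid \bgamma_n)\, b(\bgamma_n \oplus \bfeta) = b(\bgamma_n) \leq c_2 b_n$, the separated contribution is bounded below by $c_1 b_{n+1}/(c_2 b_n) \cdot c$, which is bounded away from zero since $b_{n+1}/b_n$ is. The remaining two inequalities in (4) follow immediately, the first from the lower bound just obtained plus $b_{n+1} \asymp b_n$, and the last from part (1) combined with $b(\bgamma_n \oplus \bfeta) \leq c_2 b_{n+1}$ and the same lower bound on $b(\bgamma_n)$.

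The main obstacle is the bookkeeping in part (3), particularly ensuring that the $e^{-j/2}$ error in parts (1)--(2) survives the renormalization by $b(\bgamma_n)$ as only $e^{-j/4}$ — this is the reason the coupling invariant is stated with $e^{-j/4}$ rather than $e^{-j/2}$, and it is what forces the factor-of-two loss in the coupling iteration. A secondary subtlety is that in $d=2$ the loop measure estimates of parts (1)--(2) require restricting to non-disconnecting loops, which is precisely the restriction built into $Q_n$, so the cancellation needed for Corollary \ref{cor4.1} carries over.
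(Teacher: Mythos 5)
Your proposal is essentially correct and follows the same strategy as the paper's (very terse) proof: bound $\lambda$ by $\mu$ via $Q_m \leq Q_n$, use Corollary~\ref{feb7.cor1} for the probability of re-entering $C_{n-(j/2)}$, compare Poisson kernels and loop-measure factors on the good event, decompose $b(\bgamma_n)$ into the ``enter''/``avoid'' contributions, convert the $e^{-j/2}$ absolute estimate into an $e^{-j/4}$ relative one using the hypothesis $b(\bgamma_n) \ge e^{-j/4} b_n$, and finish with the separation lemma plus Proposition~\ref{sep26.1}. In a few places you are actually more careful than the paper's sketch (e.g.\ invoking Corollary~\ref{feb7.cor1} directly rather than the paper's citation of Corollary~\ref{cor2}, which appears to be a typo; and spelling out the union bound over the two components of $\bfeta$).

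One small inaccuracy in part (4): the statement you attribute to Separation Lemma~II --- that the $\lambda_{n+1}$-mass of extensions landing in $\Sep_{n+1}$, given $\bgamma_n$, is a positive fraction of the total --- is actually Separation Lemma~I with $m=n+1$ (the lemma about the terminal tips of the paths at the outer sphere). Separation Lemma~II concerns separation of the initial portion of the $\barsaws_{n,m}$ pieces and enters only indirectly, through Proposition~\ref{sep26.1}(3), which you also cite. Also note that in your bound ``$c_1 b_{n+1}/(c_2 b_n)\cdot c$'' the comparison $b_{n+1}\asymp b_n$ is not actually needed: writing $b(\bgamma_n)=\sum_{\bfeta}\lambda(\bgamma_n\oplus\bfeta\mid\bgamma_n)\,b(\bgamma_n\oplus\bfeta)$ and using $c_1 b_{n+1}\le b(\bgamma_n\oplus\bfeta)\le c_2 b_{n+1}$ (lower bound on $\Sep_{n+1}$), the $b_{n+1}$ factors cancel and the lower bound is $(c_1/c_2)$ times the $\lambda$-conditional probability of $\Sep_{n+1}$, which Separation Lemma~I bounds below. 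This is cleaner and avoids the appearance of depending on $b_{n+1}/b_n$ being bounded.
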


\begin{proof}$\;$

\begin{enumerate}

\item   Write $\bfeta = \bfeta^* \oplus \tilde \bfeta'$ where
$\tilde \bfeta' \in \pairs_{n+1,N}.$  and use
\[    \lambda(\bgamma_n \oplus \bfeta' \oplus \tilde \bfeta \mid \bgamma_n)
\leq \mu(\bgamma_n \oplus \bfeta' \oplus \tilde \bfeta \mid \bgamma_n)
\, Q_{n+1,N}(\tilde \bfeta).\]
Using Corollary \ref{cor2}  we can see that given $\bgamma$ and $\tilde \bfeta$,
the $\mu$-probability that $\bfeta^* \cap C_{n-(j/2)} \neq \eset$
is $O(e^{-j/2})$.  We then use \eqref{qsub}.

\item  This follows from a
combination of Lemma \ref{june6.lemma1} and Corollary \ref{cor1}.

\item We write
\[  b(\bgamma_n) = \sum_{   
\bgamma_n \oplus  \bfeta\in \pairs_N} \lambda_N( \bgamma_n \oplus \bfeta \mid \bgamma), \]
and similarly for $b(\bgamma_n')$ and use the first two parts.

\item This follows from the separation lemma.

\end{enumerate}           

%
%
  \end{proof}


\begin{lemma}  For every $j > 0$, there exists $\delta_j >0$
such that the following holds.  Suppose $k \leq N - (j+2)$
and initial conditions $\bgamma_k,\tilde \bgamma_k$ are
given.
Then we can couple $\bgamma^* = (\bgamma,\tilde \bgamma)$
on the same probability space such that
with probability at least $\delta_j$, we have
$I(\bgamma_{k+j+2}) = 1$ and $\bgamma_{k+j+2}
=_j \tilde \bgamma_{k+j+2}.$

\end{lemma}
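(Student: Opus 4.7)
The plan is to split the argument into two essentially independent phases: a short ``burn-in'' where we let $\bgamma$ and $\tilde\bgamma$ evolve independently to reach well-separated configurations, followed by a cut-time coupling phase which forces agreement on the remaining $j$ scales. Throughout I will use parts 3 and 4 of the preceding lemma (which compare the $p$-chain to $\mu$ on events where $b(\bgamma_n) \geq e^{-j/4} b_n$) to transfer standard LERW coupling constructions to our weighted chain.

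\emph{Phase 1 (burn-in, levels $k$ to $k+2$).}  Run $\bgamma$ and $\tilde\bgamma$ independently under the transitions $p$.  By part 4 of the preceding lemma, each chain lands in $\Sep_{k+1}$ with probability $\ge c_1$ at the first step and in $\Sep_{k+2}$ with probability $\ge c_1$ at the second, and on the joint event of four independent successes (probability $\ge c_1^4$) we have $\bgamma_{k+2},\tilde\bgamma_{k+2}\in \Sep_{k+2}$ together with $b(\bgamma_{k+2}),b(\tilde\bgamma_{k+2})\ge c_1\, b_{k+2}$ (by part 3).  On this event the $p$-transitions from level $k+2$ onward are comparable to the plain $\mu$-transitions up to a bounded multiplicative factor, so standard random-walk coupling tools apply.

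\emph{Phase 2 (cut-time coupling, levels $k+2$ to $k+j+2$).}  This is the pair-analogue of Lemma \ref{feb7.lemma2}.  Because $\bgamma_{k+2},\tilde\bgamma_{k+2}\in \Sep_{k+2}$, the two walks of each pair have terminal points in disjoint hemispheres of $\p C_{k+2}$ separated by macroscopic distance.  Grow each of the four walks independently, from the terminal point, as random walks conditioned to avoid the existing path (and hence, at this scale, essentially avoiding the partner walk too, by the Beurling estimate in $d=2$ and transience in $d\ge 3$).  For each walk, Corollary \ref{extendcor} provides positive probability that the walk travels outward to $\p C_{k+j+1}$ staying inside a narrow cone around its initial direction, then continues to avoid $C_{k+j}$ after a cut time located in the annulus $C_{k+j+1}\setminus C_{k+j}$.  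The existence of such a cut time, combined with the unconditioned-walk-after-cut-time picture, has probability bounded below by some $\delta'_j>0$ (depending on $j$).  On this event the extension beyond the cut time is determined by a simple random walk starting from the cut point and conditioned to avoid the (fixed) outer region; by the Harnack principle together with Corollary \ref{cor2}, the endpoint distributions at level $k+j+1$ for the $\bgamma$- and $\tilde\bgamma$-chains are comparable, so one may couple the cut points and thereafter generate one simple random walk shared between the two chains. The cone confinement automatically keeps the coupled extensions inside the appropriate hemisphere, forcing $I(\bgamma_{k+j+2})=1$.

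\emph{Transferring from $\mu$ to $p$.}  The preceding argument produces a successful $\mu$-coupling with probability $\ge \delta'_j$.  To convert it into a $p$-coupling I argue as in the proof of Proposition \ref{coupling}: the loop-term $Q$ differs from $Q_{k+2}\cdot \bar Q_{k+2,k+j+2}$ by a uniformly bounded multiplicative factor on separated configurations (equation \eqref{qsub}), and $b(\bgamma_{k+j+2})/b(\bgamma_{k+2})$ is likewise controlled because Sep holds and the extensions agree in the outer scales (part 3 of the preceding lemma).  Thus the $p$-probability of the coupled event is at least a constant multiple of the $\mu$-probability.  Setting $\delta_j := c\,c_1^4\,\delta'_j$ concludes the proof.

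\emph{Main obstacle.}  The hard part is Phase 2, and specifically the requirement that the cut-time construction must simultaneously handle (i) the two walks inside a single pair, which must remain non-intersecting, (ii) the two different histories $\bgamma_{k+2}$ and $\tilde\bgamma_{k+2}$ being matched together, and (iii) the loop-measure and $b$-weighting that distinguish the $p$-chain from plain LERW.  The Sep event is what decouples (i) and (ii); points (iii) are absorbed into the bounded multiplicative corrections above.  In $d=2$ one must additionally use the Beurling estimate and the non-disconnection control from Lemma \ref{sep14.lemma1} to handle the recurrent loop measure, but these enter only through already-stated corollaries.
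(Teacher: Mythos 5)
The key flaw is the placement of the cut time, and it breaks the argument. To conclude $\bgamma_{k+j+2} =_j \tilde\bgamma_{k+j+2}$ you must have the two loop-erasures literally coincide from their first visits to $\p C_{k+2}$ all the way to $\p C_{k+j+2}$. Your Phase 2 produces a cut time in the annulus $C_{k+j+1}\setminus C_{k+j}$ and couples the walks from that cut point onward. But the cut-time mechanism (Proposition \ref{looperasefact}) only freezes the loop-erasure \emph{after} the cut time: the part of each LERW between $\p C_{k+2}$ and the cut point still depends on the portion of the simple walk before the cut time, which you have not matched. So your construction delivers agreement from roughly $\p C_{k+j+1}$ onward, i.e.\ $\bgamma_{k+j+2}=_1\tilde\bgamma_{k+j+2}$, far short of $=_j$.

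The paper's proof puts the cut time \emph{early}: the conditioned walks are first steered into cones and brought to $\p C_{k+(6/5)}$, where their terminal-point distributions are comparable to harmonic measure and can be coupled; from there the two conditioned walks are coupled to agree exactly on a positive-probability event (Radon--Nikodym control, since on the cone event both walks avoid $C_{k+1}$). The cut time is engineered in the thin annulus $C_{k+(8/5)}\setminus C_{k+(7/5)}$, and the walk never returns to $C_{k+(9/5)}$ after hitting $\p C_{k+2}$. This is what guarantees, via Proposition \ref{looperasefact}, that the loop-erasures coincide from the first visit to $\p C_{k+2}$ onward, across all $j$ intermediate scales. Your two-level burn-in (to $\Sep_{k+2}$) and the transfer from $\mu$- to $p$-coupling are both fine and essentially mirror the paper, but Phase 2 needs to be reorganized: couple the simple walks at a radius just beyond $\p C_{k+1}$, obtain a cut point between $\p C_{k+1}$ and $\p C_{k+2}$, and then let the shared continuation run to $\p C_{k+j+2}$ inside the cone. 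As written, your Phase 2 proves a much weaker statement.
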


Note that  $I(\bgamma_{k+j+2}) = 1,\bgamma_{k+j+2}
=_j \tilde \bgamma_{k+j+2}$ imply that 
$I(\tilde \bgamma_{k+j+2}) = 1$.   It will not be important
to give estimates for $\delta_j$ in terms of $j$; the coupling
works as long as $\delta_j > 0$ although the exponent $\alpha$
does depend on the actual values.  The proof of this is
similar to Lemma \ref{feb7.lemma2}  although we first
separate the paths.   We do this so that
  the measure of loops that
intersect the extensions of the paths will be bounded.  

\begin{proof}  We fix $j$.
In this proof all constants (implicit or explicit) or
phrases like ``with positive probability'' mean that there
exist constants that can be chosen uniformly over
all $k$ and all $\bgamma_k, \tilde \bgamma_k$ (although
they may depend on $j$).

By \eqref{canadaday.1},
we can see that there is a positive probability that
$\bgamma_{k+1}, \tilde \bgamma_{k+1} \in \Sep_{k+1}$.
Let 
\[   \cone^1 = \{x = (x_1,x_2,x_3): x_1 \geq e^{-1} \, |x| \},
\;\;\;\; \cone^2  =
\{x = (x_1,x_2,x_3): x_1 \leq -e^{-1} \, |x| \} .\]

Recall that $\mu(\bgamma_j \mid \bgamma_{k+1})$
is obtained by taking  simple random walks starting at the endpoint
$z_{k+1}^i$
of each $\gamma_{k+1}^i$ conditioned to avoid $\gamma_{k+1}^i$
and then erasing loops.  Let us consider the first
time that these random walks reach $\p C_{k+(6/5)}$.  Using
Lemma \ref{importantlemma}
we can see that with positive probability the random walk
avoiding 
$\gamma^1_{k+1}$ stays in 
\[    \cone_*^1 :=\{x = (x_1,x_2,x_3): x_1 \geq e^{-2} \, |x| \},\]
and its terminal point is in $\cone^1$.  The same is
true for the  walk avoiding $\gamma^2_{k+1}$ staying
in $\cone_*^2 =\{x = (x_1,x_2,x_3): x_1 \leq -e^{-2} \, |x| \}$
with terminal point in $\cone^2$.   Using the Harnack principle,
we can see that the distribution of the terminal point in $\cone^i$
is comparable
to the harmonic measure from $0$ of $\p C_{k + (6/5)}$ which in
turn is comparable to $e^{ k(1-d)}. $

For a simple random walk $S$ starting on $\cone^i   \cap \p C_{k + (6/5)}$,
conditioned to avoid $\gamma^i$, 
there is a positive probability that the following holds:
\begin{itemize}
\item  The walk never visits $C_{k+1}$.
\item  The intersection of the walk with $C_{k + j+ 2}$ is contained
in $\cone^i_*$.
\item  The intersection of the walk with $\p C_{k+j+2}$ is contained
in $\cone^i$.
\item  There is a time $t$ with $S_t \in C_{k+(8/5)} \setminus
C_{k+(7/5)}$ that is a cut point for the random walk. (See
\cite{rwcutpoint} for existence of cut points for random walks
in $\Z^2$ and $\Z^3$).
\item  The walk never returns to $C_{k + (9/5)}$ after reaching
$\p C_{k+2}$ for the first time. 
\end{itemize}

Using this we can see that we can couple the conditional
simple random walks avoiding $(\gamma^1_{k+1},
\gamma^2_{k+1})$ with those avoiding
$(\tilde \gamma^1_{k+1},\tilde \gamma^2_{k+1})$ such that 
with positive probability the
paths agree from their first visit to  $\p C_{k + (6/5)}$ onward
and they lie in the event described above.  In particular, the loop-erasure
of the paths are the same from the first visit to $\p C_{k+2}$ onwards.
(See Lemma \ref{looperasefact} and Corollary \ref{extendcor}.)
Also, the paths are sufficiently separated,  and hence  using Lemma
\ref{june6.lemma1}, we can see that 
\[           \lambda(\bgamma_{k+ j+2}
\mid \bgamma_k) \geq c \, \mu(\bgamma_{k+ j+2}
\mid \bgamma_k) ,\]
and similarly for $\tilde \bgamma$.
\end{proof}

\begin{lemma}  There exists $c_0$ such that the following is true.
Suppose that $m+j \leq n \leq N-1$ and $\bgamma_n, \tilde \bgamma_n
\in \pairs_n$ with $\bgamma_n =_j \tilde \bgamma$ and $b(\bgamma_n)
\geq e^{-j/4} \,   b_n$.  Then we can couple $(\bgamma_{n+1},
\tilde \bgamma_{n+1})$ on the same probability space such
that, except perhaps on an event of probability at most $c_0 \, e^{-j/4}$,
\[                  \bgamma_{n+1} =_{j+1} \tilde \bgamma_{n+1}, \]
\[                (\bgamma_{n+1} \setminus \bgamma_n)
\cap C_{n-(j/2)} = \eset, \]
\[      b(\bgamma_{n+1}) \geq e^{-(j+1)/4} \,   b_n . \]
\end{lemma}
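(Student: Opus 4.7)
The plan is to realize the one-step coupling by a maximal coupling argument driven entirely by the kernel-comparison estimates in the preceding lemma. First, I would introduce the \emph{good set} of one-step extensions
\[ \mathcal{G} = \bigl\{ \bfeta : \bgamma_n \oplus \bfeta \in \pairs_{n+1},\ \bfeta \cap C_{n-(j/2)} = \eset,\ b(\bgamma_n \oplus \bfeta) \geq e^{-(j+1)/4}\, b_{n+1} \bigr\}. \]
The two displayed mass bounds inside part (3) of the preceding lemma (on the $p$-mass of extensions entering $C_{n-(j/2)}$ and on those with $b$ too small) together give $p(\bgamma_n, \mathcal{G}^c) \leq c\, e^{-j/4}$, and on $\mathcal{G}$ the same lemma supplies the kernel comparison
\[ p(\tilde\bgamma_n, \tilde\bgamma_n \oplus \bfeta) = p(\bgamma_n, \bgamma_n \oplus \bfeta)\,\bigl[1 + O(e^{-j/4})\bigr], \]
from which the symmetric bound $p(\tilde\bgamma_n, \mathcal{G}^c) \leq c\, e^{-j/4}$ also follows. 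Taking the minima over $\mathcal{G}$ and summing,
\[ \sum_{\bfeta \in \mathcal{G}} \min\bigl(p(\bgamma_n, \bgamma_n \oplus \bfeta),\, p(\tilde\bgamma_n, \tilde\bgamma_n \oplus \bfeta)\bigr) \geq 1 - c_0\, e^{-j/4}. \]

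With this in hand I would apply the standard maximal coupling: with probability equal to the sum above, draw a common $\bfeta \in \mathcal{G}$ from the normalized minimum measure and set $\bgamma_{n+1} = \bgamma_n \oplus \bfeta$, $\tilde\bgamma_{n+1} = \tilde\bgamma_n \oplus \bfeta$; on the complementary event, sample $\bgamma_{n+1}$ and $\tilde\bgamma_{n+1}$ independently from the residual measures. On the ``successful'' event, which has probability at least $1 - c_0\, e^{-j/4}$, membership $\bfeta \in \mathcal{G}$ immediately yields the two pointwise conclusions $(\bgamma_{n+1} \setminus \bgamma_n) \cap C_{n-(j/2)} = \eset$ and $b(\bgamma_{n+1}) \geq e^{-(j+1)/4}\, b_{n+1}$, while the common extension together with the hypothesis $\bgamma_n =_j \tilde\bgamma_n$ gives $\bgamma_{n+1} =_{j+1} \tilde\bgamma_{n+1}$: agreement from the first visit to $\partial C_{n-j}$ onwards is preserved, and $\partial C_{n-j} = \partial C_{(n+1)-(j+1)}$.

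The only delicate point, and the main obstacle, is packaging the three small errors into a single clean bound $c_0\, e^{-j/4}$ on the failure probability; the essential probabilistic content is already encapsulated in the preceding lemma. It is worth noting that the hypothesis $b(\bgamma_n) \geq e^{-j/4}\, b_n$ is exactly what is needed to activate the multiplicative kernel comparison used in Step~1: without it, the ratios $b(\bgamma_{n+1})/b(\bgamma_n)$ and $b(\tilde\bgamma_{n+1})/b(\tilde\bgamma_n)$ built into $p$ would not be controlled, which is also why the lower bound on $b(\bgamma_{n+1})$ must be part of the conclusion---this preserves the inductive hypothesis for the next step of the coupling scheme described in Proposition~\ref{coupling}.
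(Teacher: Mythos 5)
Your proposal is correct and rests on the same key estimates as the paper's proof, namely the kernel-comparison bounds from the preceding lemma: the $p$-mass of extensions entering $C_{n-(j/2)}$ or having small $b$-value is $O(e^{-j/4})$, and on the remaining good set the transition densities for $\bgamma_n$ and $\tilde\bgamma_n$ are multiplicatively $1+O(e^{-j/4})$ close. The packaging differs slightly: the paper constructs the coupling concretely by first coupling the conditioned simple random walks that drive the $\mu$-extensions (so that with probability $1-O(e^{-j/2})$ the loop-erasures coincide and avoid $C_{n-(j/2)}$) and then checking that the $Q$-ratio and $b$-ratio tilting factors match on that event, whereas you abstract this to a direct maximal coupling on the transition kernel $p$ after establishing the total-variation bound; both routes are sound and deliver the same conclusion. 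One small point worth flagging: the third conclusion in the statement should read $b(\bgamma_{n+1}) \geq e^{-(j+1)/4}\, b_{n+1}$ rather than $b_n$, consistent with the invariant $b(\bgamma_n) \geq e^{-J_n/4}\, b_n$ maintained throughout the coupling scheme and with part 4 of the preceding lemma; your definition of $\mathcal{G}$ correctly uses the $b_{n+1}$-normalization.
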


\begin{proof}  We write $\bgamma = (\gamma^1,\gamma^2),
\tilde \bgamma = (\tilde \gamma^1,\gamma^2)$.   In the
measure $\mu$, the conditional distribution of the remainder
of the paths is obtained by taking simple random walks starting
at the terminal point conditioned to avoid  the past and then
erasing loops.  Given the results so far,
we can couple random walks conditioned to avoid
$(\gamma^1,\gamma^2)$ and $(\tilde \gamma^1,
\tilde \gamma^2)$, except for an  event
of probability $O(e^{-j/2})$
they agree and stay in $\Z^d \setminus C_{n-(j/2)}$.
This will also be true of their loop erasures.

We claim that, except perhaps on this exceptional set,
\[   \frac{Q_{n+1}(\bgamma \oplus \bgamma')}{
Q_n(\bgamma )} = \frac{Q_{n+1}(\tilde \bgamma \oplus
\tilde  \bgamma')}{
Q_n(\tilde \bgamma )} \, [1 + O(e^{-j/2})].\] 
To see this we first see that
any loop $\ell$ in $C_{n+1}$ that   
intersects both
$\gamma^1 \oplus (\gamma')^1$ and $\gamma^2 \oplus (\gamma')^2$,
but does not intersect both
$\gamma^1 $ and $\gamma^2$ must intersect $\Z^3
\setminus C_{n-(j/2)}$.   If the loop does not also
intersect $C_{n-j}$, then this happens if and only if
$\ell$    
intersects both
$\tilde \gamma^1 \oplus (\tilde \gamma')^1$ and $\tilde \gamma^2 \oplus (\tilde
\gamma')^2$,
but does not intersect both
$\tilde \gamma^1 $ and $\tilde \gamma^2$.  The measure of the set of loops
that intersect $C_{n-j}$ and $\Z^3 \setminus  C_{n-(j/2)}$
is $O(e^{-j/2})$. f $d=2$, we only consider nondisconnecting loops.

\end{proof}

\begin{proof}[Proof of Proposition
\ref{coupling}]  Let $J_k = 0$.
Let $c_0$ be as in the previous lemma,
and choose $r \geq 3$ sufficiently large so that
\[                   \sum_{j=r-2}^\infty c_0 \, e^{-j/4} \leq \frac 12.\]
Choose $\epsilon$ so that for any $(\bgamma_i,\tilde \bgamma_{i})$ with
$i < M - r$, we can find a coupling $(\bgamma_{i+r},\tilde \bgamma_{i
+r})$ such that with probability at least $2\epsilon$ we have
$I(\bgamma_{i+r} ) = 1$,   $\bgamma_{i+r} = _{r-2} \, \tilde \bgamma_{i+r}$, and 
$b_{i+r}(\gamma_{i+r}) \geq e^{-(i+r)/4}$.
If the paths are coupled satisfying this we set $J_{k+1} = J_{k+2} = 0$
and $J_{k+2 +j} = j, j=1,2,\ldots,r$;   
otherwise, we set $J_{k+j} = 0, j=1,\ldots,r$.

Recursively, if we have seen $(\gamma_{k+jr},
\tilde \gamma_{k+jr},J_{k+jr})$.  We do the following.
\begin{itemize}
\item  If $J_{k+ jr} = 0$, we try to couple as above.  If
we succeed, then we set $J_{k+jr + 1} = J_{k+jr + 2}
=0$ and $J_{k+jr + s} = s-2$ for $s=3,4,\ldots,r$.
Note that for any $\bgamma_{k+jr}$, the probability
that we will be able to couple is 
at least $2\epsilon$.
\item If $J_{k + jr} = ir - 2 \geq r-2$, we couple as in
the last lemma for $r$ consecutive levels.  This will succeed
except for an event of probability at most 
\[  c_0 \sum_{t= ir-2}^{ (i+1)r-3} e^{-t/4}.\]
If we succeed, we set $J_{m+1} = J_{m} + 1$ for $m=k+jr,
\ldots,k+(j+1)r-1$; otherwise, we set $J_m = 0,
m=k +jr+1, \ldots,k + (j+1)r$. 
\end{itemize}

We now assume $N\geq k+2n$, and let
$\sigma = \min\{j \geq 1: J_{k+jr} = 0\}.$   From
the estimates above we see that 
$\Prob\{\sigma = \infty\} \geq \epsilon$ and
\[    \Prob\{\sigma = j \mid \sigma < \infty\}
 \leq c \, e^{-rj/4}.\]
  In particular,  we can find $ \alpha >0$ such
that  \[\E[e^{\alpha r \sigma}
 \mid \sigma  < \infty] \leq 1 +  \epsilon.\]
More generally if $\sigma_l =\min\{j > \sigma_{l-1}: J_{n+jr} = 0\},
$   then
\[  \Prob\{\sigma_l < \infty\} \leq (1-\epsilon)^l,\]
and \[\E[e^{\alpha r\sigma_l}; \sigma_l < \infty]
\leq \Prob\{\sigma_l < \infty\} \,
\E[e^{\alpha r\sigma_l} \mid \sigma_l < \infty] \leq
(1-\epsilon)^l \,  (1+\epsilon)^l = (1-\epsilon^2)
^l
.\]
In particular, if $\bar \sigma =
\max\{j : J_{k+jr}=0\}$,
then  
\[ \E[e^{\alpha r \bar \sigma}]  \leq \sum_{l=0}^\infty
\E[e^{\alpha r\sigma_l}; \sigma_l < \infty]  < \infty,\] and hence
\[  \Prob\{ r\bar \sigma \geq n
\}  \leq  e^{-\alpha n}  \E[e^{\alpha  r \bar \sigma}]
          \leq c\, e^{-\alpha  n}.\]

\end{proof}

The estimate \eqref{jun12.4} follows immediately
since
\[  \frac{\lambda_{n+1}}{\lambda_n}=  \sum_{\bgamma
  \in \pairs_n, \tilde \bgamma \in \pairs_{n+1}
      , \bgamma \prec \tilde \bgamma}   \lambda_n^\#(\bgamma) \, 
  \lambda_{n+1}(\tilde \bgamma \mid \bgamma),\]
 \[  \frac{ \lambda_{n+1}[\bfeta] }{ \lambda_n[\bfeta] }= \sum_{\bgamma
  \in \pairs_n, \tilde \bgamma \in \pairs_{n+1}
      , \bgamma \prec \tilde \bgamma}   \tilde \lambda_n^\#(\bgamma) \, 
  \lambda_{n+1}(\tilde \bgamma \mid \bgamma),
 \]
 where 
 $ \tilde \lambda_n^\#$ denote the
 normalized probability measure given $\bfeta$.
 The sums on the right-hand side are greater than some absolute $c_1 >0$ by the separation lemma.  Also,
 $\lambda_{n+1}(\tilde \bgamma \mid \gamma)
  \leq \mu_{n+1} (\tilde \bgamma \mid \gamma)$, so the sum over any set of paths $\bgamma$ of probability
 $O(e^{-\alpha j})$ is bounded by   $O(e^{-\alpha j})$.
 Finally, as we have seen, if $\gamma = _j \gamma'$,
 then
 \[      
      \left|
\sum_{  \tilde \bgamma \in \pairs_{n+1}
      , \bgamma \prec \tilde \bgamma}   
  \lambda_{n+1}(\tilde \bgamma \mid \bgamma)
   - \sum_{  \tilde \bgamma \in \pairs_{n+1}
      , \bgamma' \prec \tilde \bgamma}   
  \lambda_{n+1}(\tilde \bgamma \mid \bgamma')\right|
      \leq c \, e^{-\alpha j}.\]

\section{Proof of \eqref{jun12.3.alt}}

Suppose $A$ is a simply connected 
subset of $\Z^d$ containing the origin and
$x,y$
are distinct points in $\p A$, and, as before, 
$\hat A = A \setminus \{0\}$.
Let $\pairs(A;x,y)$
denote the set of SAWs starting at $x$, ending
at $y$, otherwise staying in $A$, and going
through  the origin.  To avoid trivial cases, we
assume that  $\pairs(A;x,y)$ is non-empty.
As in the case of $\pairs_{n}$, we can also
view elements of $\pairs(A;x,y) $
as ordered pairs $\bfeta = (\eta^1,\eta^2)
\in \saws_A^x \times \saws_A^y$ with
$\eta^1 \cap \eta^2 = \{0\}$.  Here $\saws_A^x$
denotes the set of SAWs starting at the origin,
ending at $x$, and otherwise staying in $A$.

The loop-erased measure on a single
path $\saws_A^x$ is given by
\[   \mu_{A,x}(\eta) =    
      (2d)^{-|\eta|} \,  F_\eta(\hat A).\]
This is the 
measure obtained by starting a simple
random walk at the origin,    stopping the path at the
first visit to $\p A$, restricting to the event that the
terminal vertex is $x$ and that there were no previous returns
to the origin, and then erasing loops.
      It has total mass $H_{\p \hat  A}(0,x)$. 
     (If we did not want to restrict to walks that do
not return to $0$ before leaving $A$, we would get
the same expression with an extra factor of $G_A(0,0)$;
this does not affect the probability distribution.)

\begad
If $A \supset C_{n+1}$ we can also view $\mu_{A,x}$ as
a measure on $\saws_n$ by considering the path stopped
at its first visit to $\p C_n$.   The measure $\mu_{A,x}$
is mutually absolutely continuous with respect to $\mu_n$
on $\saws_n$ (with constants uniform over all $n,x,A$ and
paths $\eta \in \saws_n$).  This would not be true  for 
 the measure on $\saws_n$ obtained by stopping a
simple random walk at $\p C_n $ and erasing loops.

\endad

We define the  loop-erased
measure $\lambda_{A,x,y}$
on $\pairs(A;x,y)$ to be absolutely continuous with
respect to  $\mu_{A,x} \times \mu_{A,y}$ on
$\saws_n^x \times \saws_n^y$ with Radon-Nikodym
derivative
\[ Y(\bfeta) =  Y_{A,x,y}(\bfeta) =     \exp \left\{-L_A(\bfeta)\right \}, \]
where $L_A(\bfeta)$ = $-\infty$ if $\bfeta
\not\in \pairs(A;x,y)$, and
\begin{itemize}
\item  ($d \geq 3$) $\log L_A(\bfeta)$ is the measure
of the set of loops in $\hat A$ that intersect $\eta^1$
and $\eta^2$.
\item  ($d=2$) $\log L_A(\bfeta)$ is the measure
of the set of loops in $\hat A$ that intersect $\eta^1$
and $\eta^2$ and do not disconnect $0$ from $\p A$.
\end{itemize}
If we
view the ordered pair $\bfeta$ as a single SAW $\eta$ from
$x$ to $y$, then this is the same as the weight
\begin{equation}  \label{sep25.1}
       (2d)^{-|\eta|} \, F_\eta(\hat A) \, \kappa_A, 
       \end{equation}
where $\kappa_A = 1$ for $d \geq 3$ and if $d=2$, $\log \kappa_A$
is the measure of loops in $\hat A$ that disconnect $0$ from $\p A$
(all of which intersect both $\eta^1$ and $\eta^2$).
We write $\lambda_{A,x,y}^\#$ for the corresponding probability
measure.  For  $d =2$,  the probability measure is the
same if we normalized the measure in \eqref{sep25.1}.

Another way to describe the probability
measure  $\lambda_{A,x,y}^\#$  is as follows.
\begin{itemize}
\item  Let $\omega^1$, $\omega^2$ be independent
conditioned random walks ($h$-process) where the walk
starts at $x,y$, respectively,  stops when it reaches the origin,
and is conditioned to reach the origin before returning to $\p A$.
\item Erase the loops from each walk separately and
reverse the paths  to
get $\bfeta = (\eta^1,\eta^2)$.
We can write this path as the pair $\bfeta$
or the single path $\eta = (\eta^1)^R \oplus \eta^2$.
\item Tilt the measure by $ \exp\{-L_A(\bfeta)\}/\rho_{A,x,y}$  
where
\[ \rho_{A,x,y} = \E\left[I_{\bfeta} \, \exp\{-L_A(\bfeta)\}\right] .\]
\end{itemize}

It follows from the definition that the probability
measure $\lambda_{A,x,y}^\#$ satisfies the following
``two-sided domain Markov property''.
\begin{itemize}
\item Suppose $\eta^1,\eta^2$ are disjoint SAWs starting
at $x,y$, respectively, and otherwise staying in $\hat A$,
with terminal vertices $x',y'$, respectively.   Then
in the measure $\lambda_{A,x,y}^\#$, conditioned
that the SAW has the form
\begin{equation}  \label{aug24.1}
\eta = \eta^1 \oplus \eta^* \oplus \eta_2^R , 
\end{equation}
the distribution of $\eta^*$
is $\lambda_{A \setminus (\eta^1 \cup \eta^2),x',
y'}^\#.$
\end{itemize}

Let us describe our strategy.   We will assume that $A$
is a simply connected subset with $  C_{n+1} \setminus A$ consisting of two
points $x,y \in \p_iC_{n+1}$.  If we prove our main result in this case, it
will hold more generally for $A \supset  C_{n+1}$ using the two-sided domain
Markov property by letting $\eta^1,\eta^2$ be the parts of the SAW
stopped at the first visit to $\p_iC_{n+1}$.
 Let us fix $k < n,$ 
$A$ a set as above with corresponding $x,y$,  $  \bfeta 
\in \pairs_k$,    and
to ease notation we will leave some dependence on these parameters
implicit.  However, all constants, including implicit
constants in $\asymp$ and $O(\cdot)$ notation will be uniform
over all choices.  Let $\pairs_A = \pairs(A;x,y)$, $\lambda_A = \lambda_{A,x,y}$,
$\lambda = \lambda_n$, $\tilde \lambda$ is $\lambda$
restricted to $\bgamma$ with $  \bfeta \prec \bgamma$,
and 
$\lambda^\#$ , $\tilde \lambda^\#$ the corresponding
probability measures obtained by normalization.  Let $\lambda_A^\#$
denote the probability measure on $\pairs_n$ obtained
from $\lambda_A$ by normalization and then truncating the
paths so they are in $\pairs_n$.  Proposition \ref{coupling}
states that we can define $(\bgamma,\tilde \bgamma)$
on the same probability space such that the marginal distribution
of $\bgamma$ is $\lambda^\#$, the marginal distribution of
$\tilde \bgamma$ is  $\tilde \lambda^\#$, and, except perhaps on
an event of probability $O(e^{-\alpha j})$ , we have
$\tilde \bgamma =_j \bgamma$ where $j = (n-k)/2$. 
Here $\alpha$ is an unknown positive constant which
we may assume is less than $1/4$.
 We let
\[     Z_A(\bgamma) = \frac{\lambda_A^\#(\bgamma)}{
  \lambda^\#(\bgamma)}, \;\;\;\; \bgamma \in \pairs_n.\]
The main work will be to establish the following
proposition.

  \begin{proposition}  \label{sep9.prop1}
   There exist $   c_2 < \infty$
such that if  $A,x,y$ are as above,  then for all
$\bgamma \in \pairs_n$,
\[ Z_A(\bgamma) 
\leq c_2 . \]
Moreover, if $\bgamma =_j \tilde \bgamma$,
\[              |Z_A(\bgamma) - Z_A(\tilde \bgamma)|
  \leq c_2 \, e^{-j/4}.\]
\end{proposition}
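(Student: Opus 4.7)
The plan is to compute $Z_A(\bgamma)$ explicitly from the Radon--Nikodym formulas for $\lambda^\#$ and $\lambda_A^\#$ on $\pairs_n$, and to isolate the dependence on $\bgamma$ into a product of factors each of which can be shown to be bounded and $=_j$-stable using results already established in the paper.

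Write $Z_A(\bgamma)=N_A\cdot R_A(\bgamma)$, where $N_A=\lambda_n/\lambda_A(\pairs_A)$ is a $\bgamma$-independent normalization and $R_A(\bgamma)=\lambda_A(\bgamma\text{-ext})/\lambda_n(\bgamma)$ with $\lambda_A(\bgamma\text{-ext})$ the sum of $\lambda_A(\bfeta)$ over $\bfeta\in\pairs_A$ extending $\bgamma$. Decomposing $\bfeta=\bgamma\oplus\bar\bgamma$, applying $\lambda_A(\bfeta)=(2d)^{-|\eta^1|-|\eta^2|}F_{\eta^1}(\hat A)F_{\eta^2}(\hat A)e^{-L_A(\bfeta)}$ with $F_{\eta^i}(\hat A)=F_{\gamma^i}(\hat A)F_{\bar\gamma^i}(\hat A\setminus\gamma^i)$, and using Proposition \ref{basiclerw} so that $\mu_n(\gamma^i)=(2d)^{-|\gamma^i|}F_{\gamma^i}G_0\Es_{\gamma^i}(z^i)$ (with the analogue \eqref{jun3.1.alt} for $d=2$), one obtains
\[R_A(\bgamma)=\frac{F_{\gamma^1}(\hat A)F_{\gamma^2}(\hat A)\,e^{L_n(\bgamma)}}{F_{\gamma^1}F_{\gamma^2}\,G_0^2\,\Es_{\gamma^1}(z^1)\Es_{\gamma^2}(z^2)}\cdot T_A(\bgamma),\]
where $T_A(\bgamma)=\sum_{\bar\bgamma}(2d)^{-|\bar\gamma^1|-|\bar\gamma^2|}F_{\bar\gamma^1}(\hat A\setminus\gamma^1)F_{\bar\gamma^2}(\hat A\setminus\gamma^2)e^{-L_A(\bfeta)}$ with the sum subject to the $\pairs_A$ non-intersection constraint.

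The core technical step is to show $T_A(\bgamma)\asymp\Es_{\gamma^1}(z^1)\Es_{\gamma^2}(z^2)\,U_A(z^1,z^2)$ with $U_A$ depending only on $A,x,y,z^1,z^2$ (with the appropriate $n$-factors in $d=2$). I would split each extension $\bar\gamma^i$ into an initial escape segment to radius $e^{n-4}$ around $z^i$ plus a bulk crossing to $x$ (or $y$) in $\hat A$: the escape produces the $\Es_{\gamma^i}(z^i)$ factor by Corollaries \ref{cor1}, \ref{cor1.2d}, and the bulk is insensitive to $\gamma^i\cap C_{n-4}$ (Corollary \ref{cor4.1}). The coupling between the two extensions---the non-intersection constraint and the loop term $e^{-L_A(\bfeta)}$---is handled by invoking the Separation Lemma (the extensions separate with positive probability after which the constraint is essentially automatic) together with Lemmas \ref{june6.lemma1}, \ref{june6.lemma1.2d}, \ref{sep14.lemma1} to bound the measure of loops in $\hat A$ that hit both $\bar\gamma^1$ and $\bar\gamma^2$. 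Stability of $T_A$ under $\bgamma=_j\tilde\bgamma$ reduces to Corollary \ref{cor4.1} applied to the two escape segments, giving a ratio $1+O(e^{-j/2})$.

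Once $T_A$ is controlled, the $\Es$ factors cancel and
\[R_A(\bgamma)\asymp\frac{F_{\gamma^1}(\hat A)}{F_{\gamma^1}}\cdot\frac{F_{\gamma^2}(\hat A)}{F_{\gamma^2}}\cdot e^{L_n(\bgamma)}\cdot\frac{U_A(z^1,z^2)}{G_0^2}.\]
Each $F_{\gamma^i}(\hat A)/F_{\gamma^i}$ is the exponential of minus the measure of loops that hit $\gamma^i\subset C_n$ but are not contained in $\hat A$; since $A\supset C_{n+1}$, such loops must reach $\Z^d\setminus C_{n+1}$, so Lemma \ref{june6.lemma1} bounds this measure by $O(1)$ with variation $O(e^{j(2-d)})$ when only $\gamma^i\cap C_{n-j}$ changes. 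The factor $e^{L_n(\bgamma)}$ vs.\ $e^{\mathcal L_A(\gamma^1,\gamma^2)}$ (loops in $\hat A$ hitting both $\gamma^1,\gamma^2$) differs by the measure of loops hitting both and exiting $C_n$, again $O(1)$ and $O(e^{-j/2})$-stable by the same lemmas. For $d=2$ the $F^{*,n}$ and $\kappa_n$ versions are treated analogously using Lemma \ref{sep14.lemma1}, with disconnecting-loop divergences cancelling since all $\bgamma\in\pairs_n$ see the same disconnecting loops. Multiplying out, $R_A(\bgamma)$ is bounded above and below uniformly and satisfies $R_A(\bgamma)/R_A(\tilde\bgamma)=1+O(e^{-j/4})$; since $N_A$ is fixed by $\E_{\lambda^\#}[Z_A]=1$ this transfers to $Z_A$. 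The main obstacle is step~2, the estimate $T_A(\bgamma)\asymp\Es_{\gamma^1}(z^1)\Es_{\gamma^2}(z^2)U_A$: decoupling the non-intersection constraint and the cross-loop term between the two extensions requires a careful separation argument, and in $d=2$ the bookkeeping of the $\kappa_n,\kappa_A$ and $F^{*,n}$ factors to make all disconnecting-loop divergences cancel is delicate.
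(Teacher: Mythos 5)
Your overall plan — write $Z_A$ as a normalization times a ratio $R_A(\bgamma)$ of (unnormalized) extension masses, factor that ratio into an escape term for each tip, a loop-correction term, and a residual "bulk" term, and then show each factor is bounded and $=_j$-stable using Corollaries \ref{cor1}/\ref{cor1.2d}/\ref{cor4.1} and Lemmas \ref{june6.lemma1}/\ref{sep14.lemma1} — is the same strategy the paper uses. The paper works with the explicit formula
\[
Z_A(\bgamma) = \frac{a_n}{b_A}\cdot
\frac{\mu_A^\#(\gamma^1)\mu_A^\#(\gamma^2)}{\mu_n^\#(\gamma^1)\mu_n^\#(\gamma^2)}
\cdot\sum_{\bfeta}\mu_A^\#(\bfeta\mid\bgamma)\,e^{-\tilde L(\bgamma\oplus\bfeta)},
\]
shows the middle ratios are $\asymp 1$ and multiplicatively $=_j$-stable, and shows the sum is $\leq 1$ and \emph{additively} $=_j$-stable. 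Your decomposition is an equivalent reorganization of the same quantities. However, there are two genuine gaps.

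First, you assert that ``$R_A(\bgamma)$ is bounded above and below uniformly,'' and correspondingly that $T_A(\bgamma)\asymp \Es_{\gamma^1}(z^1)\Es_{\gamma^2}(z^2)U_A(z^1,z^2)$ with a two-sided $\asymp$. This is false uniformly over $\bgamma$: the paper's remark immediately after the proposition points out that when the tips of $\gamma^1,\gamma^2$ are close, $Z_A(\bgamma)$ (and hence $R_A(\bgamma)$, since the normalization is $\bgamma$-independent) can be small. The smallness is concentrated in the residual factor, i.e.\ your $T_A$ after the $\Es$'s are taken out, which is exactly the sum $\sum_\bfeta\mu_A^\#(\bfeta\mid\bgamma)e^{-\tilde L}$ in the paper's notation. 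You only have the upper bound $T_A\lesssim \Es\,\Es\,U_A$; the lower bound holds on $\Sep_n$, which together with the Separation Lemma is enough to bound the normalization $N_A$, but it is not a uniform lower bound. This matters for how you phrase stability: the paper proves $|\Psi(\bgamma)-\Psi(\tilde\bgamma)|\leq c\,e^{-j/4}$ \emph{additively}, because $\Psi$ can be small and a multiplicative statement $\Psi(\bgamma)/\Psi(\tilde\bgamma)=1+O(e^{-j/4})$ may simply fail. Your claimed multiplicative ratio $R_A(\bgamma)/R_A(\tilde\bgamma)=1+O(e^{-j/4})$ is stronger than what the lemmas give and stronger than what is needed; the additive version, combined with the uniform upper bound, is what yields $|Z_A(\bgamma)-Z_A(\tilde\bgamma)|\leq c_2 e^{-j/4}$.

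Second, you do not treat the case where the boundary points $x,y$ are close, which the paper handles as a separate (and harder) case. When $|x-y|$ is small, the two extensions must come back together near $\p A$, and the non-intersection indicator and the loop term $e^{-\tilde L}$ are sensitive to the configuration near $x,y$. Your remark that ``the extensions separate with positive probability after which the constraint is essentially automatic'' only controls separation near $\p C_n$; near $x,y$ the extensions are pinned. The paper's fix is to decompose the extension $\hat\gamma^i=\gamma^i\oplus\hat\eta^i\oplus\eta^i$, introduce a separation event for the pair $\bfeta$ (the outer pieces near $x,y$) on $\p C_{n+(4/5)}$, and apply a second separation lemma there together with the intermediate quantity $\Psi(\bgamma,\bfeta)\leq 1$ and the bound $\Psi\geq c_1$ on separated configurations \eqref{sep8.4}. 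Without this second separation argument your estimate on the residual factor does not close in the general case of $x,y\in\p A$.
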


\begad
If $\bgamma \in \Sep_n$, then $Z_A(\bgamma) \asymp 1$.  However,
if the tips of $\gamma^1,\gamma^2$ are close, it is possible for
$Z_A(\bgamma)$ to be small.
\endad

The theorem follows almost immediately from
the proposition as we now show.
 Let $q$ denote a probability measure
on $\pairs_n \times \pairs_n$ such that the marginal
distributions are $\tilde \lambda^\#,\lambda^\#$,
respectively and such that
\[     q\{(\tilde \bgamma, \bgamma):
\tilde \bgamma =_j \bgamma\} \geq 1 -
 c\, e^{-j\alpha}.\]
Note that  $\tilde \lambda^\#$ is the same as
$[\lambda^\#[\pairs_n(\bfeta)]] ^{-1}
\, \lambda^\#$, restricted to $\pairs_n(\bfeta)$,
and hence  of $Z = Z_A,$
\[ \sum_{\bfeta \prec \bgamma}
  \lambda_A^\#(\bgamma) = 
  \sum_{\bfeta \prec \bgamma}
  \lambda^\#(\bgamma) \, Z(\bgamma)
     =  \lambda^\#[\pairs_n(\bfeta)]
      \sum_{\bgamma}
       \, \tilde \lambda^\#(\bgamma)
        \, Z(\bgamma).\]
Also, given Proposition \ref{sep9.prop1},
\begin{eqnarray*}
\sum_{\tilde \bgamma}
       \, \tilde \lambda^\#(\tilde \bgamma)
        \, Z(\tilde \bgamma)
        & = & 
\sum_{(\tilde \bgamma,\bgamma)}
       \, q(\tilde\bgamma,\bgamma)
        \, Z(\tilde \bgamma)\\
        & = & O(e^{-j\alpha})+  \sum_{(\tilde \bgamma,\bgamma)}
       \, q(\tilde\bgamma,\bgamma)
        \, Z( \bgamma)\\
    & = &  O(e^{-j\alpha})+  \sum_{\bgamma}
       \, \lambda^\#(\bgamma)
        \, Z( \bgamma)\\
        &=& O(e^{-j\alpha})+  \sum_{\bgamma}
       \, \lambda^\#_A(\bgamma)
        = 1 +  O(e^{-j\alpha}).
        \end{eqnarray*}
   Therefore,
    \[ \sum_{\bfeta \prec \bgamma}
  \lambda_A^\#(\bgamma) =  \lambda^\#[\pairs_n(\bfeta)]
    \,  \left[1 +  O(e^{-j\alpha})\right].\]

\begin{proof}[Proof of Proposition \ref{sep9.prop1}]
We will first prove the result in the case where
$x$ and $y$ are separated, say  $|x-y| \geq
e^{n-5}$.
We write each $\hat \gamma^1 \in \saws_{A}^x$ as
\[  \hat \gamma^1 = \gamma^1 \oplus \eta^1,\]
where $\gamma^1 \in \saws_n$. We do similarly for $\hat \gamma^2
\in \saws_A^y$ and we also write
\[   \hat \bgamma = \bgamma \oplus   \bfeta.\]
We write
\[   L_A(\hat \bgamma) = L_n(\bgamma)
  + \tilde L (\hat \bgamma), \]
 where $\tilde L (\hat \bgamma) = 
  L _A(\hat \bgamma) - L_n(\bgamma).$
   If $d=2$, we restrict to nondisconnecting loops.  We will
   also write $L_n(\bgamma) = -\infty$
   if $\gamma^1 \cap \gamma^2 \neq \{0\}$,
    and $\tilde L(\hat \gamma) = - \infty$
   if $\hat \gamma^1 \cap \hat \gamma^2 
   \neq \{0\}$. 

   We write $\mu^\#$ for the measure on loop-erased
   walks obtained by taking an infinite loop-erased walk
   and stopping it at the first visit to $\p C_{n}$.
   We write $\mu_A^\#$ for the measure obtained from
   the loop-erasure of a random walk from $0$ conditioned
   to leave $A$ at $x$ or $y$ (that is, we stop the
   walk at $\p A$ and the erase the loops).

As we have seen, we have
 \[    \lambda^\#(\bgamma) =a_n\, 
    \mu^\#(\bgamma) \, \exp\{-L_n(\bgamma)\} , \]
  where
  \[   a_n^{-1} = \E\left[   \exp\{-L_n\} \right] , \]
 and the expectation is with respect to   $\mu^\#$. Similarly,
\begin{equation}  \label{sep8.2.alt}
  \lambda_A^\#(\bgamma)
   =b_{A} \sum_{     \bfeta } \mu_A^\#(\bgamma)
    \, 
    \mu_A^\#( \bfeta \mid  \bgamma ) \, \exp\left\{-[L_n(\bgamma
    )   +
         \tilde L (\bgamma \oplus \bfeta) ]\right\},
         \end{equation}
         where
  $\log          \tilde L (\bgamma \oplus \bfeta)$
  is the measure of loops in $A$ that intersect both $\gamma^1\oplus \eta^1$ and $\gamma^2 \oplus \eta^2$ but are not loops in $C_n$ intersecting
  $\gamma^1$ and $\gamma^2$, and
  \[     b_A^{-1 } = \E_A\left[  \exp\left\{-(L_n +
  \tilde L 
     ) \right\} \right] , \]
     where the expectation is with respect to
      $\mu_A^\#$.    Therefore,
  \[ Z_A(\bgamma)
    = \frac{a_n}{b_A} \, \frac{\mu_A^\#(\gamma^1)
     \, \mu_A^\#(\gamma^2)}
       {\mu_n^\#(\gamma^1)\,\mu_n^\#(\gamma^2) } \, 
       \sum_{     \bfeta }
    \mu_A^\#( \bfeta \mid  \bgamma ) \, \exp\left\{- 
         \tilde L ( \bgamma \oplus \bfeta) ]\right\}.\]

Recall that
\[  \frac{\mu_A^\#(\gamma^1)
  }
       {\mu_n^\#(\gamma^1)  }
        = \frac{H_{\p(A \setminus \gamma^1)}(z,x)}
           {H_{\p \hat A}(0,x)\, \Es_{\gamma^1}(z) \, \phi_A\,  \phi_A(\gamma^1)}, \]
       where
       \begin{itemize}
     \item $(d \geq 3)$ $\phi_A = 1$
     and  $\log \phi_A(\gamma^1)$ is the measure of
   loops in $\hat \Z^d$ that intersect $\gamma^1$
 but do not lie in $A$. 
 \item $(d=2)$   $\log \phi_A$ is the measure of loops in $\hat \Z^2$ that do not lie in $A$ but disconnect $0$ from
 $\p C_n$ and $\log  \phi_A(\gamma^1)$ is
 the measure of nondisconnecting loops not
 in $A$ that intersect $\gamma^1$.
\end{itemize}
Lemma \ref{sep14.lemma1} shows that $\phi_A \asymp n$ if $d=2$, and hence
we can use Corollaries \ref{cor1}, \ref{cor3} to conclude that 
\[ H_{\p(A \setminus \gamma^1)}(z,x) \asymp
     H_{\partial A}(0,x) \,  {\Es_{\gamma^1}(z) \, \phi_A}, \;\;\;\;
      \phi_A(\gamma^1) \asymp 1, \]and using Corollary \ref{cor4.1},
      if $\gamma^1 =_j \tilde \gamma^1$,
 \[    \frac{\mu_A^\#(\gamma^1)
  }
       {\mu_n^\#(\gamma^1)  }   =
        \frac{\mu_A^\#(\tilde \gamma^1)
  }
       {\mu_n^\#(\tilde \gamma^1)  }
        [1 + O(e^{-j/4})].\]
The same results hold for $\gamma^2$, and hence
if $\bgamma=_j \tilde \bgamma $,
\begin{equation}  \label{sep26.2}
    \frac{\mu_A^\#(\bgamma )
  }
       {\mu_n^\#(\bgamma )  }   =
        \frac{\mu_A^\#(\tilde \bgamma )
  }
       {\mu_n^\#(\tilde \bgamma )  }
        [1 + O(e^{-j/4})].
        \end{equation}

Since, 
\[   \sum_{     \bfeta }
    \mu_A^\#( \bfeta \mid  \bgamma ) \, \exp\left\{- 
         \tilde L ( \bgamma \oplus \bfeta) ]\right\} \leq 1, \]
         we see that $Z_A $ is uniformly bounded above.
If $\bgamma =_j \tilde \bgamma$,
then
\[ 
 \left|\sum_{     \bfeta }
   \mu_A^\#( \bfeta \mid  \bgamma ) \, \exp\left\{- 
         \tilde L ( \bgamma \oplus \bfeta)  \right\}
          -
          \sum_{     \bfeta }
    \mu_A^\#( \bfeta \mid  \tilde \bgamma ) \, \exp\left\{- 
         \tilde L ( \tilde \bgamma \oplus \bfeta)  \right\}\right|
       \hspace{.3in}  \]
\[ \leq   \sum_{     \bfeta }
   |\mu_A^\#( \bfeta \mid  \bgamma )-
    \mu_A^\#( \bfeta \mid  \tilde \bgamma )| \hspace{2in} \]
   \[ \hspace{1in}    
      +   \sum_{     \bfeta }  \mu_A^\#( \bfeta \mid  \bgamma )  \, \left|\exp\left\{- 
         \tilde L (   \bgamma \oplus \bfeta)\right\}
           - \exp\left\{- 
         \tilde L ( \tilde \bgamma \oplus \bfeta)\right\}\right| 
     .\]
 We use Corollary \ref{cor4.1} to see that
 \[  \sum_{     \bfeta }
   |\mu_A^\#( \bfeta \mid  \bgamma ) -
    \mu_A^\#( \bfeta \mid  \tilde \bgamma )| \leq  O(e^{-j}), \]
  \[   \sum_{     \bfeta  \cap C_{n-(j/2)} \neq \eset}  \mu_A^\#( \bfeta \mid  \bgamma ) 
   \leq O(e^{-j}), \]
and, again, if  $\bfeta  \cap C_{n-(j/2)} =\eset$,
\[  \left|\exp\left\{- 
         \tilde L (   \bgamma \oplus \bfeta)\right\}
          - \exp\left\{- 
         \tilde L ( \tilde \bgamma \oplus \bfeta)\right\}\right| 
           \leq O(e^{-j/2}).\]
 This establishes the proof when $x$ and $y$ are separated.

 \begad
 
 The proof when $x$ and $y$ are separated is the main part.  If $x$ and $y$ are
 not separated, then we first separate $x$ and $y$ and then use the argument for
 separated points.  This is straightforward using a separation lemma, but we discuss
 the details below.
 
 \endad

When $x$ and $y$
are   not separated, we first run   paths from $x$ and $y$  
until the paths get to $C_{n+(4/5)}$ and use the fact that there
is a positive probability that the paths have separated.
Indeed, we write each $\hat \gamma^1 \in \saws_{A}^x$ as
\[  \hat \gamma^1 = \gamma^1 \oplus \hat \eta^1 \oplus \eta^1,\]
where $\gamma^1 \in \saws_n$ and $\eta^1$ starts at the last visit
to $C_{n+ (4/5)}$. We do similarly for $\hat \gamma^2
\in \saws_A^y$ and we also write
\[   \hat \bgamma = \bgamma \oplus \hat \bfeta \oplus \bfeta.\]
We partition loops that intersect both $\hat \gamma^1$ and $\hat \gamma^2$
into three sets:
\begin{itemize}
\item  Loops in $C_n$ that intersect both $\gamma^1$ and $\gamma^2$.
\item  Loops in $A$ that intersect both $\eta^1$ and $\eta^2$
\item  All other loops.
\end{itemize}
If $d=2$, we restrict to nondisconnecting loops,
In that way we write
\[    L_A(\hat \bgamma) = 
    L_n( \bgamma) + L_A(\bfeta) + \tilde L(\hat \bgamma).\]
   As before, we  
  write $L_n(\bgamma) = -\infty$
   if $\gamma^1 \cap \gamma^2 \neq \{0\}$,
   $L_A(\bfeta) = - \infty$ if $\eta^1 \cap \eta^2
 \ne \eset$, and $\tilde L(\hat \gamma) = - \infty$
   if $\hat \gamma^1 \cap \hat \gamma^2 
   \neq \{0\}$. 

   We write $\mu^\#$ for the measure on loop-erased
   walks obtained by taking an infinite loop-erased walk
   and stopping it at the first visit to $\p C_{n}$.
   We write $\mu_A^\#$ for the measure obtained from
   the loop-erasure of a random walk from $0$ conditioned
   to leave $A$ at $x$ or $y$ (that is, we stop the
   walk at $\p A$ and the erase the loops).

By definition, we have
 \[    \lambda^\#(\bgamma) =a_n\, 
    \mu^\#(\bgamma) \,   \exp\{-L_n(\bgamma)\} , \]
  where
  \[   a_n^{-1} = \E\left[   \exp\{-L_n\} \right] , \]
 and the expectation is with respect to   $\mu^\#$. Similarly,
\begin{equation}  \label{sep8.2}
  \lambda_A^\#(\bgamma)
   =b_{A} \sum_{     \bfeta \oplus \hat \bgamma}
    \mu_A^\#( \bgamma \oplus \bfeta \oplus \hat \bgamma) \, \exp\left\{-[L_n(\bgamma
    ) + L_A(\bfeta) +
         \tilde L(\tilde \bgamma) ]\right\},
         \end{equation}
  where
  \[     b_A^{-1 } = \E_A\left[  \exp\left\{-(L_n + L_A +
         \tilde L) \right\} \right] , \]
      and the expectation is with respect to
      $\mu_A^\#$.   

      We will choose from
      $\mu_A^\#$ by choosing $\bfeta$ first, then
      $\bgamma$, and then finally $\hat \bfeta$.
     By doing this we can see that  \eqref{sep8.2} can be written as
      \[
      \lambda_A^\#(\bgamma)
      =b_A \sum_{\bfeta}\sum_{\hat \bfeta}
            \mu_A^\#(\bfeta) \, \mu_A^\#(\bgamma \mid
              \bfeta) \, \mu_A^\#(\hat\bfeta
               \mid \bgamma, \bfeta)  \,
                \exp\left\{-[L_n(\bgamma
    ) + L_A(\bfeta) +
         \tilde L(\tilde \bgamma) ]\right\},
               \]
which in turn can be written as  
\[  b_A\sum_{\bfeta} \mu_A^\#(\bfeta) \, \exp\{-L_A(\bfeta)\}
     \, \mu_A^\#(\bgamma \mid \bfeta) \, \exp\{-L_n(\bgamma)\}
     \, \Psi(\bgamma, \bfeta), \]
 where
 \[ 
      \Psi(\bgamma, \bfeta)  = \sum_{\hat \bfeta} \mu_A^\#(\hat \bfeta \mid
          \bgamma,\bfeta) \, \exp\{-\tilde L(\bgamma \oplus
            \hat \bfeta \oplus \bfeta)\}
             \leq 1.\]
Therefore,
\[  \frac{\lambda_A^\#(\bgamma)}{\lambda^\#(\bgamma)}
  = \frac{b_A}{a_n }
   \, \sum_{\bfeta} \  \mu_A^\#(\bfeta) \, \exp\{-L_A(\bfeta)\}
     \,\frac{\mu_A^\#(\bgamma \mid \bfeta)}{\mu^\#(\bgamma)}
       \,  \Psi(\bgamma, \bfeta) .\]

We will need to use the separation lemma on both the
beginning and the ending of the path.  There is a lot
of arbitrariness in the definition of the separation event.
We will not be specific here, but the important facts are
that there there exists $c_1 > 0$ such that 
\begin{equation}  \label{sep8.4}
\Psi(\bgamma,\bfeta) \geq c_1, \;\;\;\;
\bgamma,\bfeta \in \Sep   ,
\end{equation}
\[  \sum_{\bfeta
\in \Sep } \mu_A^\#(\bfeta) \, \exp\{-L_A(\bfeta)\}
\geq c_1 \, \sum_{\bfeta
 } \mu_A^\#(\bfeta) \, \exp\{-L_A(\bfeta)\},\]
\[  \sum_{\bgamma^* \in\Sep  }
      \mu_n^\#(\bgamma^*) \, \exp\{-L_n(\bgamma^*)\}
     \geq c_1 \, 
      \sum_{\bgamma^* }
      \mu_n^\#(\bgamma^*) \, \exp\{-L_n(\bgamma^*)\}.\]
To be specific, we will say that $\bfeta \in \Sep$ if each
initial point (the vertex in $\p C_{n + (4/5)}$) is distance
at least $e^n/100$ from the other path in the pair.  For
$\bgamma^* \in \pairs_n$   we can use the definition of
separation as before.   Establishing \eqref{sep8.4} uses
the same argument as in Proposition \ref{sep26.1}  ---  given $(\bgamma,\bfeta)$
that are separated, the conditioned simple random walks
whose loop erasure given $\hat \bfeta$ have a positive
probability of staying apart; hence so do the paths in
$\hat \bfeta$; and the measure of loops intersecting both
is bounded uniformly above zero.

We now claim the following.
\begin{itemize}
\item If $\bgamma=_j \tilde \bgamma$, then
\begin{equation}  \label{sep9.1}
\left|\frac{\mu_A^\#(\bgamma \mid \bfeta)}{\mu^\#(\bgamma)} - \frac{\mu_A^\#(\tilde \bgamma \mid \bfeta)}{\mu^\#(\tilde \bgamma)}\right|
 \leq c \, e^{-j/4} . 
 \end{equation}
\end{itemize}

We will consider the result without conditioning,
\[ \left|\frac{\mu_A^\#(\bgamma  )}{\mu^\#(\bgamma)} - \frac{\mu_A^\#(\tilde \bgamma )}{\mu^\#(\tilde \bgamma)}\right|\]
where the estimate should be uniform over all $A$
containing  $C_{n+(4/5)} \setminus \p_i C_{n+(4/5)}$
and boundary points $x,y \in C_{n +(4/5)}  \cap 
  \p A$.
and all $x,y \in \p A$.  Since
$\mu_A^\#(\gamma \mid \bfeta) = \mu_{A \setminus \bfeta}^\#
(\gamma)$ (with appropriately chosen starting points), we 
get our estimate as in \eqref{sep26.2}.

We have already noted that $\Psi(\bgamma,\bfeta)
\leq 1$.  We now claim the following.
\begin{itemize}
\item If $\bgamma =_j \tilde \bgamma$, then
\[    |\Psi(\bgamma, \bfeta)
 - \Psi(\tilde \bgamma, \bfeta)| \leq c \, e^{-j/4}.\]
\end{itemize}

To show  this we first  note again    that
\[   \sum_{\bfeta^* \cap C_{n-(j/2)} \ne \eset}
  \mu^\#_A(\bfeta^* \mid \bgamma,\bfeta)
     \leq c \, e^{-j/4}.\]
     Also, if $\bfeta^* \cap C_{n-(j/2)}  =\eset$,
    we have
    \[     \bgamma \oplus \bfeta^* \oplus \bfeta
       =_{j/2} \, \tilde \bgamma \oplus \bfeta^* \oplus
         \bfeta, \]
   in which case
  \[  |\tilde L(   \bgamma \oplus \bfeta^* \oplus
         \bfeta) - \tilde L( \tilde \bgamma \oplus \bfeta^* \oplus
         \bfeta)| \leq c \, e^{-j/4}.\]

   \end{proof}

   \appendix

   \section{On the proof of Lemma \ref{importantlemma}}
\label{rwproofsec}

The proof of the result for $d =2$ was done in and
it was adapted for $d=3$ in \cite{Shir}.   We will not give the
complete proof here, but we will sketch most of the argument
using a   different function than used before. 
Let us first consider Brownian motion, where the result is easier.

Let $U$ denote the open cube
\[   U = \left\{x=(x^1,\ldots,x^d) \in \R^d: |x^j - 1|
< 1\right\}, \]
and let $V = \p U \cap \{x^1 = 1\}$.  Let $g(x)$
be the the harmonic function on $U$ with boundary
value $1_V$, that is, $g(x)$ is the probability that a
Brownian motion starting at $x$ exits $U$ at $V$.
By symmetry, we see that $g(0) = 1/2d$.  Let
\[  U^- = \{(x^1,\ldots,x^d) \in U: x^1 \leq 0\},\]
and let $L$ be the line segment $  
L = \{(x^1,0,\ldots,0) \in U : 0 < x^1 < 1\}.$
We will need several properties of  $g$.

\begin{lemma}$\;$
\begin{enumerate}
\item   There exists $c < \infty$ such
that $ |x|/4d \leq g(x) -  g(0) \leq c \, |x|$ for $x \in L$.
\item  There exists $c_1 >0$ such that if  $x = (x^1,\tilde x)
\in U^-$, then $g(x) \leq g(0) - c_1 |\tilde x|^2.$
\item   There exists $\epsilon > 0$ such that if
$U_\epsilon^- =  \{x = (x^1,\tilde x)
\in U: x^1 \leq \epsilon |\tilde x|^2\},$  
then
\begin{equation}  \label{jun12.1}
 g(0) = \sup\{g(x): x \in U_\epsilon^- \}.
 \end{equation}

\end{enumerate}

\end{lemma}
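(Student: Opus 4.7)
The three statements form a package describing the local behavior of $g$ near the origin: linear growth in the $e_1$ direction (towards $V$), quadratic decay in the transverse directions, and a paraboloidal region in which $g \le g(0)$. My plan is to exploit two symmetries. First, $g$ is even in each coordinate $x^j$ with $j \ne 1$, since both $U$ and $V$ are invariant under $x^j \mapsto -x^j$; this forces $\nabla_{\tilde x} g(0) = 0$. Second, the function $g(x^1,\tilde x) - g(-x^1,\tilde x)$ is harmonic in $U$ with boundary values $+1$ on $V$, $-1$ on the opposite face, and $0$ elsewhere, hence strictly positive for $x^1 > 0$ by the maximum principle; so $x^1 \mapsto g(x^1,\tilde x)$ is strictly increasing. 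These two facts drive the entire proof.

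For Part 1, the upper bound is immediate: $g$ is $C^\infty$ on a neighborhood of $\overline L$ (which is bounded away from the corners of $\partial U$), so $g$ is Lipschitz there and we may take $c := \sup_{y \in L}|\partial_1 g(y)|$. For the lower bound, I would first check that $t \mapsto g(te_1)$ is convex on $(-1,1)$: by harmonicity its second derivative equals $-\sum_{j\ge 2}\partial_{jj}g(te_1)$, and each $\partial_{jj}g(te_1) \le 0$ because the even-in-$x^j$ symmetry combined with $g=0$ on $\{x^j=\pm 1\}$ makes $(te_1)$ a local maximum of $g$ in the $e_j$ direction (a Hopf-type argument). Convexity then yields $g(te_1)-g(0) \ge \partial_1 g(0)\cdot t$, and a quantitative lower bound on $\partial_1 g(0)$ from the Poisson-kernel representation at the origin delivers the constant $1/(4d)$. (The value $1/(4d)$ is far from tight; any positive constant would suffice for the intended random-walk application.)

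For Part 2, the monotonicity in $x^1$ gives $g(x^1,\tilde x) \le g(0,\tilde x)$ for $(x^1,\tilde x) \in U^-$, so it suffices to bound $\tilde x \mapsto g(0,\tilde x)$ on the slice $\{x^1=0\} \cap U$. The even symmetry makes $\tilde x = 0$ critical, and the Hopf lemma applied at each face $\{x^j = \pm 1\}$ together with real-analyticity shows that the Hessian in $\tilde x$ at the origin equals $-aI_{d-1}$ for some $a > 0$. Taylor expansion then gives $g(0,\tilde x) \le g(0) - (a/4)|\tilde x|^2$ on a fixed ball $|\tilde x| \le r_0$; on the compact complement $\{|\tilde x| \ge r_0,\ (0,\tilde x) \in U\}$, strict inequality $g(0,\tilde x) < g(0)$ is bounded below by $\delta > 0$, while $|\tilde x|^2$ is bounded above, so the quadratic bound holds there with a possibly smaller constant. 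Taking the minimum of the two yields the required $c_1$.

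Part 3 is the payoff. For $x \in U_\epsilon^-$ with $x^1 \le 0$, Part 2 already gives $g(x) \le g(0,\tilde x) \le g(0)$. For $0 < x^1 \le \epsilon|\tilde x|^2$, decompose
\[
g(x^1,\tilde x) = g(0,\tilde x) + \bigl[g(x^1,\tilde x) - g(0,\tilde x)\bigr]
\le g(0) - c_1|\tilde x|^2 + C\,x^1
\le g(0) - (c_1 - C\epsilon)|\tilde x|^2,
\]
where $C$ is a uniform upper bound for $|\partial_1 g|$ on the slab $\{0 \le y^1 \le 1/2\}$, finite because the slab is bounded away from $V$. Choosing $\epsilon < c_1/C$ makes the right-hand side $\le g(0)$, with equality only at $x = 0$, giving the desired supremum identity \eqref{jun12.1}. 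The main obstacle in the whole proof is the quantitative promotion of the qualitative maximum-principle facts to the derivative and Hessian lower bounds needed in Parts 1 and 2; once those are in place, Part 3 is a routine Taylor-and-compactness combination.
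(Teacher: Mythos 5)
Your argument is correct in spirit but follows a genuinely different route from the paper's. The paper's proof is probabilistic throughout: Part~1 couples a Brownian motion $B$ started at $0$ with its translate $W = B + x$, sorts by which process exits $U$ first, and reads $|x|/(4d)$ directly off a gambler's-ruin estimate on the first coordinate; Part~2 couples reflected Brownian motions so that $d-1$ one-dimensional gambler's-ruin computations give $\Prob\{T^x < T^0\} \geq c\,|\tilde x|^2$ directly, with no Taylor expansion. You instead work analytically via reflection symmetry, convexity along $L$, and a Hessian-plus-compactness argument; Part~3 is handled essentially the same way in both (Part~2 plus interior gradient bounds). Both routes are valid: yours makes the role of the cube's symmetries very explicit, while the paper's produces clean explicit constants with essentially no auxiliary analysis.

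Two of your steps are stated too loosely. In Part~2, ``the Hopf lemma applied at each face $\{x^j = \pm 1\}$ together with real-analyticity'' does not by itself give $\partial_{jj} g(0) < 0$; Hopf at the outer face controls only the normal derivative of $g$ there, not the second derivative at the center, and analyticity does not rule out higher-order vanishing. The clean way to get strict negativity of the transverse Hessian is to apply Hopf to the harmonic function $\partial_j g$ along the hyperplane $\{x^j = 0\}$: by evenness $\partial_j g$ vanishes there, the reflection argument plus the strong maximum principle gives $\partial_j g < 0$ on $\{x^j > 0\} \cap U$, and Hopf then yields $\partial_{jj} g(0) < 0$. In Part~1, your lower bound produces $\partial_1 g(0) \cdot |x|$ rather than $|x|/(4d)$, and the promised ``quantitative lower bound on $\partial_1 g(0)$ from the Poisson-kernel representation'' is not a one-liner for a cube --- it is essentially what the paper's coupling is computing. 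As you note, any positive constant suffices for the application, so this is not a gap, but to obtain the explicit $1/(4d)$ of the statement you would in practice want the coupling or an equivalent gambler's-ruin estimate.
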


\begin{proof}$\;$

\begin{enumerate}

\item  
The upper bound follows from derivative estimates for
harmonic functions, so we will only show the lower bound
using
a coupling argument. Let $B_t$ be a Brownian
motion starting at the origin and $W_t = B_t +x$
a Brownian motion starting at $x$.  Let
$T = \min\{t: B_t \in \p U\}, T^x = \min\{t: W_t
\in \p U\}$.  Note that $W_t - B_t  = x$ for all $t$.
\begin{itemize}
\item  if $T = T^x$, then $B_\tau$ and $W_\tau$
cannot be exiting at $V$.
\item  If $ T < T^x$, then $B_{T} \not \in V$.
There is still a chance that $B_{T^x} \in V$.
\item  If $T^x < T$, then $W_{T^x} \in V$ and
$B_{T^x} = W_{T^x} - x$. By symmetry, we can see that
\[    \Prob\{T^x < T\}
\geq \Prob\{B_T \in V\} = \frac 1{2d},\]
Using the gambler's ruin
on the first component, we can see that, given
$T^x < T$,  the probability
that the first component of $B_{t}$ will equal 
$-1$ before it equals $1$ is  $|x|/2$. In particular,
\[     \Prob\left\{B_{T} \not\in V \mid B_{T^x} \in V
\right\}
 \geq \frac{|x|}{2}.\]
Therefore,
\[   g(x) - g(0) = \Prob\left\{B_{T^x} \in V, B_T \not\in V\right\} \geq 
\frac{|x|}{4d}.\]
\end{itemize}

\item
The previous argument can be used to show that
$g(-\delta,x^2,\ldots,x^d)  < g(0,x^2,\ldots,x^d)$
for $\delta > 0$ so it suffices to consider the
maximum over $(0,x^2,\ldots,x^d)$.  By symmetry, we
can consider the same problem for the positive
quadrant $Q = \{(x^1,\ldots,x^d) \in U: x^j > 0\}$
where we reflect the Brownian motion on the boundaries
$\{x^j = 0\}$.   
We can couple reflected
Brownian motions $B_t$ starting at $0$
and $W_t$  at $x \in \{(0,x^2,\ldots,x^d):
0 \leq x^j < 1\}$, so that
all of the components of the latter Brownian motion are greater
than those of the former Brownian motion and the first
components always agree.   This gives $d$ independent
coupled reflecting Brownian motions (the coupling
is described in the next paragraph) with
the following properties: the first components  are
the same and start at $0$; for the others, one starts at $0$,
the other at $x^j$, and coupling occurs when the latter reaches
$x^j/2$.  In particular, the probability that the latter one
reaches $1$ before coupling is greater
than  $x^j/2$, and given that, the probability that
the first one exits somewhere other than  $V$ is greater
than $c|x^j|$.
Using this we can see that
\[        \Prob\{T^x < T^0\} \geq c \, |x|^2.\]

We now describe the
coupling of one-dimensional
reflected Brownian motions
$(Y_t,Z_t)$   with
$Y_0 = 0, Z_0 = x \in (0,1)$.
Let  $W_t$ be a Brownian motion starting
at $x$ and let $Z_t = |W_t|$.  Let
$\sigma_{x/2},\sigma_1$  be the first
times that $Z_t=x/2, Z_t = 1$, respectively.  Let
$Y_t = |W_t - x|$ for $t \leq \sigma_{x/2}$ and $Y_t = Z_t$
for $t \geq \sigma_{x/2}$, and note that $Y_t$ is
a reflected Brownian motion starting at the origin.
Let $T_Z,T_Y$ denote the first time
greater than or equal to  $\sigma_{x/2} \wedge
\sigma_1$ at which $Z_t \in \{0,1\}, Y_t \in \{0,1\}$, respectively.
\begin{itemize}
\item  If $\sigma_{x/2} < \sigma_1$, then  $T_Z= T_Y$.
\item  If $\sigma_1 < \sigma_{x/2}$, $T_Z = \sigma_1$
and then $Y_{\sigma_1} = 1-x$.  Therefore, using
the gambler's ruin estimate,
\[    \Prob\{Y_{T_Y} = 0 \mid \sigma_1 < \sigma_{x/2}
\} = x.\]
\end{itemize}
Therefore, 
\[    \Prob\{Y_{T_Y}=0\} = \Prob\{\sigma_1 < \sigma_{x/2}\}
 \,  \Prob\{Y_{T_Y} = 0 \mid \sigma_1 < \sigma_{x/2} \}
 \geq \frac{x^2}{2}. \]

\item  This follows from Part 2 and derivative estimates
for positive harmonic functions.

\end{enumerate}
\end{proof}

Now suppose $A \subset \{x \in \R^d:|x| \geq 1\}$ and $|y| = 1$.  Let $ U'$ be
a rotated, dilated, and translated version of the set $U$ above
centered at $y$, rotated so that the
inward radial direction of $U'$ corresponds to the positive first
component, and dilated by a factor $\delta$ where $\delta$ is sufficiently
small so that the analogue of $U_\epsilon^-$ lies entirely in $\{|x|
\geq 1\}.$  Using \ref{jun12.1},
we can find a uniform $\delta$ depending only on the
$\epsilon$ in \eqref{jun12.1}. Let $V'$ be the analogue of $V$ and
let $f$ be the analogue of $g$, that is, $f(x)$ is the probability
that a Brownian motion starting at $x$ exits $U'$ at $V'$.  There
exists $\delta'> 0$ such that all the point in $V'$ have radius
at most $1-\delta'$.   Let $\tau_A$ be the first time a Brownian motion
hits $A$.  Then if $r < 1$ and $z = ry \in U'$, we have $g(z) \geq g(x)$
for all $x \in A \cap U'$, and hence,
\[           \Prob^z\{B_{T} \in V \mid T < \tau_A\}
\geq \Prob^z\{B_T \in V\} \geq \frac 1{2d},\]
\[   \Prob^z\{B_{T} \in V \mid T >  \tau_A\}
\leq \Prob^z\{B_T \in V\} .\]

For random walk, we need to find the discrete analogue of
the function $g$.  Some work needs to be done because
we do not want to require  that  the cube $U$  be lined
up with the lattice $\Z^d$; indeed, we want an estimate that
is  uniform over all rotations. Let $U'$ be a rotation of
the $U$ above, with corresponding $V \subset \p U'$, and
$g$ the harmonic function on $U$ with boundary value $1_V$.
For each $r =1/n > 0$,   Let $K_r =
\{x \in \Z^d: rx \in U'\}$.  Let $V_r$ be the
subset of $\p K_r$ corresponding to $nV$.  
Let $g^*(x)$ be the discrete harmonic function on $K_r$ with
boundary value $1_{V_r}$, that is, $g^*(x)$ is the probability
that a random walk starting at $x$ exits $K_r$ at $V_r$.

\begin{lemma}
There exists $c < \infty$ such that
for every rotation and every $n$,
\[  \left|g^*(x) - g(rx) \right| \leq \frac c n  \;\;\;
\mbox{ if } \;\;\;
\dist(x,\p V_r) \geq  \frac{n}{10}.\]
\end{lemma}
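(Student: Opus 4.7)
The plan is to compare $g^\ast$ and $g$ via the standard discrete-continuous harmonic function comparison, using a martingale identity combined with Taylor expansion of $g$ on the lattice, and controlling both an interior error and a boundary error. Write $h(y) = g(ry)$ for $y \in K_r \cup \partial K_r$. Let $S_k$ be simple random walk on $\Z^d$ started at $y$, $T^\ast = \min\{k : S_k \in \partial K_r\}$. Since $g^\ast(y) = \Prob^y\{S_{T^\ast} \in V_r\}$, applying optional stopping to $h(S_{k\wedge T^\ast}) - \sum_{j<k\wedge T^\ast} 2d\,\Delta_{\mathrm{disc}} h(S_j)$ yields
\[
  g^\ast(y) - h(y) \;=\; \E^y\bigl[1_{V_r}(S_{T^\ast}) - h(S_{T^\ast})\bigr] \;-\; 2d\,\E^y\Bigl[\sum_{k=0}^{T^\ast-1}\Delta_{\mathrm{disc}} h(S_k)\Bigr].
\]

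For the interior error, the key input is that $\Delta g = 0$ on $U'$; hence by a fourth-order Taylor expansion (odd-order terms cancel by lattice symmetry),
\[
  \Delta_{\mathrm{disc}} h(y) \;=\; O\!\bigl(r^4 \,\|D^4 g\|_{B(ry,r)}\bigr).
\]
Standard interior regularity for the harmonic measure $g$ of the half-face $V$ gives derivative estimates that blow up only near the $(d-2)$-dimensional edge $\partial V$; concretely, $|D^4 g(z)| \le C\,\mathrm{dist}(z, \partial V)^{-7/2}$, reflecting the square-root behavior of harmonic measure near its support's boundary. Since the random walk's expected occupation time of a shell at distance $\asymp k$ (in lattice units) from $\partial V_r$ is $O(n^{2}/(k+1))$ with $k \le n$, and the hypothesis $\mathrm{dist}(x,\partial V_r) \ge n/10$ keeps $y$ and hence the bulk of the walk well away from the singular region, summing $r^4 (k/n)^{-7/2}$ over dyadic shells gives a total of $O(1/n)$.

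For the boundary error, note that on lattice boundary points $y \in \p K_r$ with $ry$ on the face $V$ (respectively on a face disjoint from $V$), $h(y)$ equals $1$ (respectively is $O(r^{1/2})$ or smaller, using the fact that $g$ vanishes on those faces and Hölder regularity up to $\p U'\setminus\p V$). The only lattice boundary points where $h$ differs substantially from $1_{V_r}$ are those within distance $O(1)$ of the discrete analog of $\p V$. Since $x$ lies at lattice distance $\ge n/10$ from $\partial V_r$, the exit distribution of the random walk from $K_r$ places at most $O(1/n)$ mass on this bad boundary region (comparable to harmonic measure of a strip), contributing another $O(1/n)$ term.

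The main technical obstacle is the quantitative regularity statement for $g$ near $\partial V$, since the derivatives genuinely blow up there; one needs the geometry of $U'$ being a box and harmonic measure of a half-face to extract the correct exponent $1/2$ that makes the dyadic sum converge at rate $O(1/n)$. Uniformity over all rotations of the cube is free once one verifies the derivative bound in an intrinsic form depending only on $\mathrm{dist}(\cdot,\partial V)$. An alternative approach (giving the same rate but requiring more machinery) is to couple the random walk with a Brownian motion via a KMT-type strong embedding and directly compare exit distributions; the martingale/Taylor route above is more elementary and exploits the specific structure we already need for the earlier lemmas.
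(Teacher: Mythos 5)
Your overall plan is the same as the paper's: represent $g^*(x)-g(rx)$ through the killed Green's function (equivalently, the optional-stopping identity you wrote), bound the discrete Laplacian of $\hat g(y)=g(ry)$ by Taylor expansion plus boundary regularity of $g$, and sum. You are also right to flag the boundary mismatch $\E^y[1_{V_r}(S_{T^*})-\hat g(S_{T^*})]$, which the paper absorbs silently into the Green's-function identity. But the quantitative core of your argument is wrong, and the stated quantities do not produce $O(1/n)$.

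The derivative bound $|D^4g(z)|\le C\,\dist(z,\p V)^{-7/2}$ is incorrect. Near the edge $\p V$ the local picture is a wedge of interior angle $\pi/2$ with boundary data jumping from $0$ to $1$; after the conformal map $z\mapsto z^2$ the local solution is proportional to $\arg(z)$, which is homogeneous of degree $0$, so $|D^4 g|\asymp \dist(\cdot,\p V)^{-4}$ there, not $\dist^{-7/2}$. There is no square-root gain from a $\pi/2$ corner. More importantly, a bound expressed only through $\dist(\cdot,\p V)$ cannot give the right rate: near the smooth faces of $\p U'$ away from the edge one must exploit that $g$ itself \emph{vanishes} (or equals $1$) there, so $|g-g(z)|=O(\dist(z,\p U'))$ on a ball of comparable radius. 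This supplies an extra power of the distance and yields $|D^4 g(z)|\lesssim \dist(z,\p U')^{-3}$ in that region. That gain (a factor $j/n$ in lattice units) is exactly what turns $\sum_j j\cdot j^{-4}=O(1)$ into $\sum_j j\cdot(j^3n)^{-1}=O(1/n)$. Without it the interior error is $O(1)$, not $O(1/n)$. The paper obtains this gain by combining the harmonic-function estimate $|\Delta h(0)|\le cR^{-4}\sup_{|x|\le R/2}|h(x)-h(0)|$ with the gambler's-ruin bound $\hat g(y)\le c\,\dist(y,\p U_r)/n$ away from $V_r$, and near $V_r$ by a two-sided gambler's-ruin estimate in both $\delta=\dist(y,V_r)$ and $\delta'=\dist(y,\p U_r\setminus V_r)$.

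Your occupation-time figure $O(n^2/(k+1))$ is also not right (it decreases in $k$, but the occupation time of a shell should grow with $k$), and even taking all your stated quantities at face value, $\sum_k (n^2/(k+1))\cdot r^4\,(k/n)^{-7/2} = n^{3/2}\sum_k k^{-9/2}$, which is $O(n^{3/2})$, not $O(1/n)$; dropping the occupation-time factor gives $n^{-1/2}\sum_k k^{-7/2}=O(n^{-1/2})$, still not $O(1/n)$. You also never use the hypothesis $\dist(x,\p V_r)\ge n/10$ in a quantified way; in the paper it enters precisely through the Green's function bound $G_{U_r}(x,y)\lesssim \delta\delta'/n^d$ for $y$ near the edge, which is what makes the edge contribution summable. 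I suggest reorganizing the sum by $j=\dist(y,\p U_r)$ as in the paper and redoing the estimates with the corrected exponents.
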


\begin{proof}  We use $\Delta$ to denote the discrete
Laplacian, let $\hat g(x) = g(rx)$ and recall that
\[       \hat g(x) = g^*(x) -  \sum_{y \in U_r}
 G_{U_r}(x,y) \, \Delta \hat g(y) , \]
where $\Delta$ denotes the discrete Laplacian.  Therefore,
it suffices to show that \begin{equation}  \label{jun13.1}
    \sum_{y \in U_r}
 G_{U_r}(x,y) \, |\Delta \hat g(y)| \leq  \frac c n
 , \;\;\;\;
\dist(x,\p V_r) \geq  \frac{n}{10}, 
\end{equation}
and to prove \eqref{jun13.1}  it suffices to establish that
\begin{equation}   \label{jun13.2}
\sum_{y \in U_r, \; j-1 < \dist(y,\p U_r)
\leq j} G_{U_r}(x,y) \, |\Delta \hat g(y)|
  \leq     \frac{c}{  j^2n}.
  \end{equation}

We will use the following fact that uses only the Taylor
approximation of $h$ and the derivative bounds for
harmonic functions.
\begin{itemize}
\item There exists $c > 0$ such that if $R \geq 2$
and  $h$ is a (continuous) harmonic function on $\{x \in \R^d: |x| < R \}$,
then
\[     \Delta h(0) \leq \frac{c}{R^4} \, \sup_{|x| \leq R/2}
|h(x) - h(0)| ,\]
where $\Delta$ denotes the discrete Laplacian.
\end{itemize}

We now split into two cases.
\begin{itemize}
\item  Suppose that $\dist(y, V_r) \geq n/20$.  In this
case, the gambler's ruin estimate shows that 
\[ \hat g(y)
\leq c \, \frac{\dist(y,\p U)}{n} . \]
Therefore, if $j-1 < \dist(y,\p U) \leq j$,
\[   \left|\Delta \hat g(y) \right| \leq c \, j^{-4}  \, \frac{j}{n}
= \frac{c}{j^3n}.\]
Also, using the gambler's ruin estimate, we can see that
for all $x$, 
\[    \sum_{j-1 < \dist(y,\p U_r)
\leq j} G(x,y)  \leq c \, j .\]
Therefore, 
\[     \sum_{y \in U_r, \; j-1 < \dist(y,\p U_r)
\leq j, \dist(y, V_r) \geq n/20} G_{U_r}(x,y) \, |\Delta \hat g(y)|
  \leq     \frac{c}{  j^2\,n}.\]
\item  Suppose that $\dist(y, V_r) = \delta \leq  n/20$
and let $\delta' = \dist(y,\p U_r \setminus V_r). $
\begin{itemize}
\item  Suppose that $\delta < \delta'$.  Then using
the gambler's ruin estimate, we can see that there exists $c$
such that for $|z-y| < \delta/2$,
\[               1 - \hat g(z) \leq   c \, \delta/\delta'.\]
Therefore,
\[         |\Delta \hat g(y)| \leq c \, \delta^{-4} \, (\delta/\delta') =
\frac{c}{\delta^3 \, \delta'}.\]
\item  Suppose that $\delta ' \leq \delta$.  
Then similarly that there exists $c$
such that for $|z-y| < \delta/2$,
\[                \hat    g(z) \leq   c \, \delta'/\delta .\]
Therefore,
\[         |\Delta \hat g(y)| \leq  \frac{c}{\delta \, \delta'^3}.\]

\end{itemize}
Using the gambler's ruin estimate (or similarly), we see that
\[       G_{U_r}(x,y) \leq \frac{c}{n^{d-2}} \, \frac{\delta}{n}
\, \frac{\delta'}{n} = \frac{c \, \delta\, \delta'}{n^d},\]
and hence if $j-1 < \dist(y,\p U_r)
\leq j $,  
\[     G_{U_r}(x,y)\,  |\Delta \hat
g(y)| \leq \frac{c}{(\delta \wedge \delta')^2\, n^d}
\leq \frac{1}{j^2  \, n^d} .\]
Using the fact that $\#\{x \in U_r:  j-1 < \dist(y,\p U_r)
\leq j\}  \leq c \, n^{d-1}, $
we get that 
\[     \sum_{y \in U_r, \; j-1 < \dist(y,\p U_r)
\leq j, \dist(y, V_r) \leq n/20} G_{U_r}(x,y) \, |\Delta \hat g(y)|
  \leq     \frac{c}{  j^2\,n}.\]  
\end{itemize}
This establishes \eqref{jun13.2}, and hence proves the lemma.
\end{proof}

Using this we establish that there exists a $c_1$ such
that there exists $x$ with $|x| \leq c_1$ and
\[       g^*(x) \geq g^*(y), \;\;\;\;
 y \in n \, U_\epsilon^-.\]
 Therefore, for any $A \subset \Z^d \setminus C_n$,
 the probability that random walk starting at $x$
 leaves on the analogue of $V'$ given that if avoids
 $A$ is greater than the unconditioned probability which
is greater than $(1/2d)$.

\section{Sketch of proof of Separation Lemma I}

If $\bfeta = (\eta^1,\eta^2)
\in \pairs_n$, with terminal vertices $z_1,z_2$,
we define the separation $\Delta(\bfeta) = \Delta_n(\bfeta)$ to
be the largest $r$ such that 
\[   \dist(z _j, \eta^{3-j})  \geq r\,  e^{n } ,\;\;\;\;j=1,2,
.\]

\begin{itemize} 

\item {\bf Claim.}  There exists $r_0, c > 0$ such that if
$0< r < r_0$ and 
$\bfeta \in \pairs_n$ with $\Delta(\bfeta) \geq r $, then
\[    \sum   \lambda_{n+8r}(\bfeta' \mid \bfeta) \geq c .\]
where the sum is over all $\bfeta' = (\eta^1 \oplus
\tilde \eta^1, \eta^2 \oplus \tilde \eta^2) \in  
\pairs_{n+8r}$ with
\[    \diam[\tilde \eta^i] \leq 16 r \, e^n , \;\;\; i=1,2,\]
\[  \dist(\tilde \eta^i,\eta^{3-i} \oplus \tilde \eta^{3-i})
\geq \frac r2 \, e^{n},\;\;\;\; i=1,2,\]
\[   \Delta(\bfeta') \geq 2r.\]

\end{itemize}

Indeed, using Proposition \ref{basiclerw} and Lemma \ref{importantlemma},
we can see that
\[    \sum   \mu(\bfeta' \mid \bfeta) \geq c .\]
Also, any loop that intersects both $\eta^1 \oplus
\tilde \eta^1$  and $ \eta^2 \oplus \tilde \eta^2$
but does not intersect both $\eta^1$ and $\eta^2$,
must be of diameter at least $ (r/2) \, e^{n}$
and intersect either $\tilde \eta^1$  or $ \tilde \eta^2$.
Since the diameters of these curves are bounded by
$16re^{n}$ we see (Lemma \ref{june6.lemma1})
that the loop measure of such curves
is uniformly bounded.  Hence $Q_{n+8r}(\bfeta') \geq 
c\, Q_{n}(\bfeta).$

\begin{itemize}  
\item {\bf Claim  }  There exist $c_1,\theta$ such that if
$0 \leq s \leq 1/2$ and $\bfeta \in \pairs_{n+s}$,
then
\[       \sum_{\bfeta' \in \Sep_{n+1}}
\lambda(\bfeta'\mid \bfeta) \geq c_1 \, \Delta(\bfeta)^{\theta}.\]
\end{itemize}

This is obtained by repeated application of the previous
claim.

\begin{itemize}    
\item {\bf Claim}  There exist $\rho < 1 $ such that if
$\bfeta \in \pairs_{n}$, then 
\[   
\sum_{\bfeta' \in \pairs_{n+ 4 r}, \Delta(\bfeta') <r}
\lambda(\bfeta'\mid \bfeta)   \leq \rho.\]

\end{itemize}
Indeed, using Proposition \ref{basiclerw} and Lemma \ref{importantlemma},
we can see that
\[     \sum_{\bfeta' \in \pairs_{n+ 4 r}, \Delta(\bfeta') <r}
\mu(\bfeta'\mid \bfeta)   \leq \rho .\]

We choose $N$ sufficiently large so that
\[              \sum_{j=N}^\infty j^2 \, e^{-j}
          \leq \frac 14.\]
Let $u_j = 1/2$ for $j \leq N$ and for $j > N$,
\[               u_j = u_{j-1} - j^2 \, e^{-j} .\]
in particular, $1/4 \leq u_j \leq 1/2$ for all $j$.
Let $b_j$ be the infimum of
\[ \frac{ \lambda_{n+1}^\Sep(\bfeta)}
  {\lambda_{n+1} (\bfeta)}  , \]
where the infimum is over all $n \leq u \leq n+ u_j$,
and all $\bfeta \in \pairs_{n+u}$ with 
\[              \Delta(\bfeta) \geq e^{-j}.\]
Note that the result will hold with
\[                 c = \inf_j b_j , \]
so we need only show that the right-hand side is strictly
positive.
The result is obtained by establishing two facts:
\begin{enumerate}
\item  There exist $\alpha > 0, c_0 >0$ such that
$b_j \geq c_0\, e^{-\alpha j}$. In particular, 
for each $j$, $b_j > 0$.
\item  There exists a summable sequence $\delta_j$ such
that for all $j$ sufficiently large,
\[           b_{j+1} \geq b_j \, [1 - \delta_j].\]

\end{enumerate}
Indeed, if these hold and    $M$ is sufficiently large so that
$\delta_j \leq 1/2$ for $j \geq M$, then
\[         \inf_{j} b_j  \geq  b_M \, \prod_{j = M}^\infty
(1-\delta_j).\]         
The proof proceeds now as in \cite{LV}.


\begin{thebibliography}{00}

\bibitem{BLV}  C. Bene\v{s}, G. Lawler, F. Viklund,
Scaling limit of the loop-erased random walk Green's function,
to appear in Prob. Th. Rel. Fields.

\bibitem{Kenyon}   R. Kenyon (2000).  The
asymptotic determinant of the discrete Laplacian, Acta Math. {\bf 185}, 239--286.

\bibitem{Kesten} H. Kesten (1987). Hitting probabilities of random walks on $\Z^d$,
Stoc. Proc. and
Appl. {\bf 25}, 165--184.

\bibitem{Kozma}  G. Kozma (2007). 
The scaling limit of loop-erased random walk in three dimensions, Acta Math. {\bf 19},
29-152.

\bibitem{LLERW}
G. Lawler (1980).  A self-avoiding
random walk,   Duke Math   J  {\bf 47}, 655-694.

\bibitem{cutpoint}  G. Lawler (1996).
Hausdorff dimension of cut points for Brownian motion, Electr  J.   Probab. {\bf 1},
paper no. 2.

\bibitem{rwcutpoint}   G. Lawler (1996). Cut times for simple
random walk, Electron. J. Probab. {\bf 1}, paper no. 13.

\bibitem{Nonintersect} G. Lawler (1995). 
Nonintersecting planar Brownian motions, Mathematical Physics Electronic Journal {\bf 1},
paper \#4.



\bibitem{LL}  G. Lawler, V. Limic (2010).  {\em Random Walk: A Modern Introduction},
Cambridge U. Press.

\bibitem{LPuckette} G. Lawler, E. Puckette (1997).
The disconnection exponent for simple random walk, Israel J.  Math.
{\bf 99}, 109--122.

\bibitem{LSWexp} G. Lawler, O. Schramm, and W. Werner (2001). 
Values of Brownian intersection exponents II: plane exponents,
Acta Math. {\bf 187}, 275--308.

\bibitem{LSW}  G. Lawler, O. Schramm, W. Werner (2004).
{Conformal invariance of planar loop-erased
   random walks and uniform spanning trees}, Ann.\ Probab.\ {\bf 32},   939--995.

\bibitem{LSunW}  G. Lawler, X. Sun, W. Wei,
Loop-erased random walk, uniform spanning forests and bi-Laplacian
Gaussian field in the critical dimension, preprint.

\bibitem{LV} G. Lawler, F. Viklund,
Convergence of loop-erased random walk in the natural parametrization, preprint.

\bibitem{Vermesi}
G. Lawler, B. Vermesi (2012).
Fast convergence to an invariant measure for non-intersecting 3-dimensional
Brownian paths, Alea, Latin Amer. J of Prob. and Stat. {\bf 9}, 717–738.

\bibitem{Masson}  R. Masson (2009) The growth exponent
for loop-erased random walk, Electr. J. Probab. {\bf 14}, paper no. 36,
1012-1073.

\bibitem{Shir} D. Shiraishi, Growth exponent for loop-erased walk
in three dimensions, preprint.





\end{thebibliography}
\end{document}